\date{}
\renewcommand{\uppercasenonmath}[1]{}
\date{}
\renewcommand{\uppercasenonmath}[1]{}
\numberwithin{equation}{section} \theoremstyle{plain}
\newtheorem{lem}{Lemma}[section]
\newtheorem{cor}[lem]{Corollary}
\newtheorem{prop}[lem]{Proposition}
\newtheorem{thm}[lem]{Theorem}
\newtheorem{Defn}[lem]{Definition}
\newtheorem{Ex}[lem]{Example}
\newtheorem{Quest}[lem]{Question}
\newtheorem{Property}[lem]{Property}
\newtheorem{Properties}[lem]{Properties}
\newtheorem{Subprops}{}[lem]
\newtheorem{Para}[lem]{}
\newtheorem{rem}[lem]{Remark}
\newenvironment{exa}{\begin{Ex}\rm}{\end{Ex}}
\newenvironment{para}{\begin{Para}\rm}{\end{Para}}
\newtheorem*{ack*}{ACKNOWLEDGEMENTS}
\newcommand{\pf}{\noindent\begin {proof}}
\newcommand{\epf}{\end{proof}}
\newcommand{\ra}{\rightarrow}
\newcommand{\Ext}{\mbox{\rm Ext}}
\newcommand{\Hom}{\mbox{\rm Hom}}
\newcommand{\im}{\mbox{\rm im}}
\newcommand{\Gpd}{\mbox{\rm Gpd}}
\newcommand{\pd}{\mbox{\rm pd}}
\newcommand{\Add}{\mbox{\rm Add}}
\newcommand{\add}{\mbox{\rm add}}
\begin{document}
\begin{center}
{\large  \bf When the kernel of a complete hereditary cotorsion pair is the additive closure of a tilting module}

\vspace{0.5cm}  Jian Wang$^{a}$, Yunxia Li$^{a}$ and Jiangsheng Hu$^{b}$\footnote{Corresponding author.}\\
%\bigskip
$^{a}$College of  Science, Jinling Institute of
Technology, Nanjing 211169, China\\
$^{b}$School of Mathematics and Physics, Jiangsu University of Technology\\
 Changzhou 213001, China\\
E-mail: wangjian@jit.edu.cn, liyunxia@jit.edu.cn and jiangshenghu@jsut.edu.cn
\end{center}

%\begin{figure}[b]
%\rule[-2.5truemm]{5cm}{0.1truemm}\\[2mm]
%{\small }
%\end{figure}

%\begin{figure}[b]
%\rule[-2.5truemm]{5cm}{0.1truemm}\\[2mm]
%{\small }
%\end{figure}
\bigskip
\centerline { \bf  Abstract}

\bigskip
\leftskip10truemm \rightskip10truemm \noindent

In this paper, we study when the kernel of a complete hereditary cotorsion pair is the additive closure of a tilting module. Applications go in three directions. The first is to characterize when the little finitistic dimension is finite. The second is to obtain equivalent formulations for a Wakamatsu tilting module to be a tilting module. The third is to give some new characterizations of Gorenstein rings.
\\
\vbox to 0.3cm{}\\
{\it Key Words:}  cotorsion pair; additive closure; strongly Gorenstein projective module; tilting module.\\
{\it 2010 Mathematics Subject Classification:} 18G10; 18G20;
16E65.

\leftskip0truemm \rightskip0truemm
\bigskip
\section { \bf Introduction}
Tilting theory started in the context of finitely generated modules over Artin algebras and was further generalized over arbitrary associative rings with unit and to infinitely generated modules (see \cite{LAHFUCol,LAHFUCol2001,Colby1990,Colby1997,Colby1995,GT}). Recall that the \emph{tilting class} $\mathcal{B}$ associated to a tilting module $T$ over a ring $R$ is the class of $R$-modules satisfying $\mathcal{B}=T^{\bot_\infty}$  \cite{GT}. Tilting modules and classes occur naturally in various areas of contemporary module theory. For example, finiteness of the left little finitistic dimension of a left Noetherian ring $R$ is equivalent to the existence of a particular tilting class (see \cite[Theorem 2.6]{Anegeleri}).

Cotorsion pairs were invented by Salce \cite{SL} in the category of
abelian groups and have been deeply studied in approximation theory of modules \cite{GT}, especially in
the proof of the Flat Cover Conjecture \cite{BEE}. Let $\mathcal{K}_\mathfrak{C}=\mathcal{A}\cap \mathcal{B}$ be the kernel of the cotorsion pair $\mathfrak{C}=(\mathcal{A}, \mathcal{B})$. The additive closure $\textrm{Add}M$ of a module $M$ over a ring $R$ is defined as the class of all modules that
are isomorphic to direct summands of direct sums of copies $M$.
An interesting and deep result in \cite{LAHFUCol} is that an $n$-tilting class $\mathcal{B}$ can be characterized by the properties of cotorsion pairs: $\mathfrak{C}=(\mathcal{A}, \mathcal{B})$ is a complete hereditary cotorsion pair, $\mathcal{A}$ consists of modules of projective dimension at
most $n$ and $\mathcal{K}_\mathfrak{C}$ is closed under arbitrary direct sums. We note that the proof of this result relies on the fact that for any complete hereditary cotorsion pair $\mathfrak{C}=(\mathcal{A}, \mathcal{B})$, if $\mathcal{B}$ is an $n$-tilting class then $\mathcal{K}_\mathfrak{C}$ $=$ $\Add T $ for some $n$-tilting $R$-module $T$. However the converse is not true in general (see Example \ref{em:3.2}).
The following is our first main result which gives some criteria for the kernel of a complete hereditary cotorsion pair to be the additive closure of a tilting module.

\begin{thm}\label{thm: 181.100}
Let $R$ be a ring and $\mathfrak{C}$ $=$ $(\mathcal{A},\mathcal{B})$ a complete hereditary cotorsion pair of $R$-modules, and let ${\mathcal{P}_n}$ {\rm(}${\mathcal{GP}_n}${\rm)} be the class of $R$-modules of finite  projective {\rm(}Gorenstein projective{\rm)} dimension at most $n$. Then the following are equivalent for any nonnegative integer $n$:
\begin{enumerate}
\item $\mathcal{K}_\mathfrak{C}$ $=$ $\Add T$, where $T$ is an $n$-tilting $R$-module.
\item  $\mathcal{K}_\mathfrak{C}\subseteq {\mathcal{P}_n}$, $\mathcal{A}\subseteq {\mathcal{GP}_n}$ and $\mathcal{K}_\mathfrak{C}$ is closed under direct sums.
\item $\mathcal{K}_\mathfrak{C}\subseteq {\mathcal{P}_n}$ and $\mathcal{B}$ $=$ $ T^{\bot_\infty}$ $\cap$ $\mathcal{X}^{\bot_\infty}$, where $T$ is an $n$-tilting $R$-module and $\mathcal{X}$ is a class of strongly Gorenstein projective $R$-modules.

\noindent Moreover, if $\mathcal{B}$ $=$ $G^{\bot_\infty}$ for an $R$-module $G$ and $\Omega^{n} G$ is an $n$-th syzygy of $G$, then the above conditions are  equivalent to
    \item $\mathcal{B}$ $=$ $T^{\bot_{\infty}}\cap N^{\bot_{\infty}}$, where $T$ is an $n$-tilting $R$-module and $N$ is a strongly Gorenstein projective $R$-module.
\item $\mathcal{K}_\mathfrak{C}$ is closed under direct sums and there is a strongly Gorenstein projective $R$-module $M$ in $\mathcal{A}$ such that $\Omega^{n} G$ is a direct summand of $M$.
\item $\mathcal{K}_\mathfrak{C}$ is closed under direct sums and there is a strongly Gorenstein projective $R$-module $N$ in $\mathcal{A}$ such that $(\Omega^{n} G)^{\bot_{\infty}}$ $=$ $ N^{\bot_{\infty}}$.

\end{enumerate}
\end{thm}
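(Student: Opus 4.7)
The plan is to establish the cycle (1) $\Rightarrow$ (2) $\Rightarrow$ (3) $\Rightarrow$ (1) unconditionally, and then, under the extra hypothesis $\mathcal{B} = G^{\bot_\infty}$, close the loop (3) $\Rightarrow$ (4) $\Rightarrow$ (5) $\Rightarrow$ (6) $\Rightarrow$ (3). Three tools underlie the argument: the Bazzoni--Herbera--Angeleri-H\"ugel characterization of $n$-tilting classes via complete hereditary cotorsion pairs recalled in the introduction from \cite{LAHFUCol}; the Bennis--Mahdou theorem that every Gorenstein projective module is a direct summand of some strongly Gorenstein projective module; and the dimension-shifting identity $N^{\bot_1} = N^{\bot_\infty}$ valid for any strongly Gorenstein projective $N$, which comes from the defining exact sequence $0 \to N \to P \to N \to 0$.

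For (1) $\Rightarrow$ (2), both $\mathcal{K}_\mathfrak{C} \subseteq \mathcal{P}_n$ and closure under direct sums are immediate from $\pd T \leq n$ and the definition of $\Add T$. For $\mathcal{A} \subseteq \mathcal{GP}_n$, given $A \in \mathcal{A}$, I would combine the finite $\Add T$-coresolution of $R$ provided by the tilting axioms with the vanishing $\Ext^{\geq 1}(\mathcal{A}, \Add T) = 0$ (since $\Add T \subseteq \mathcal{B}$) to splice an $n$th syzygy of $A$ into a totally acyclic complex, witnessing $\Gpd A \leq n$. For (2) $\Rightarrow$ (3), I would apply \cite{LAHFUCol} to the finite-projective-dimension part of $\mathcal{A}$ to produce an $n$-tilting $T$ whose $\Add T$ matches the projective-dimension-$\leq n$ summand of $\mathcal{K}_\mathfrak{C}$; for each $A \in \mathcal{A}$, the canonical exact sequence $0 \to K \to Q \to A \to 0$ with $Q$ Gorenstein projective and $\pd K \leq n-1$ (available because $\Gpd A \leq n$) allows me, after enlarging $Q$ via Bennis--Mahdou, to collect the resulting strongly Gorenstein projective modules into a class $\mathcal{X}$, and a dimension-shifting argument then yields $\mathcal{B} = T^{\bot_\infty} \cap \mathcal{X}^{\bot_\infty}$. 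For (3) $\Rightarrow$ (1), each $X \in \mathcal{X}$ is strongly Gorenstein projective and hence has projective dimension $0$ or $\infty$; since $\mathcal{K}_\mathfrak{C} \subseteq \mathcal{P}_n$, the kernel collapses to the tilting kernel $\Add T$.

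In the second block, under $\mathcal{B} = G^{\bot_\infty}$, the already-established (2) gives $\Gpd G \leq n$ and hence $\Omega^n G$ Gorenstein projective. Then (3) $\Rightarrow$ (4) follows by replacing the class $\mathcal{X}$ by the single strongly Gorenstein projective $N$ produced by Bennis--Mahdou containing $\Omega^n G$ as a direct summand, the $\bot_\infty$-equality being verified via dimension shifting. For (4) $\Rightarrow$ (5), heredity gives $N \in \mathcal{A}$, and comparing $G^{\bot_\infty} = T^{\bot_\infty} \cap N^{\bot_\infty}$ with $(\Omega^n G)^{\bot_\infty}$ forces $\Omega^n G$ to be a direct summand of a strongly Gorenstein projective module in $\mathcal{A}$ built from $N$ (taking $M = N$ or $N$ plus a projective, which remains strongly Gorenstein projective). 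For (5) $\Rightarrow$ (6), take $N = M$ and use $M^{\bot_1} = M^{\bot_\infty}$ together with $\Omega^n G \mid M$ to conclude $(\Omega^n G)^{\bot_\infty} = M^{\bot_\infty}$. Finally, (6) $\Rightarrow$ (3) is an instance of (3) with $\mathcal{X} = \{N\}$, the tilting module $T$ being furnished by the already proven equivalence (2) $\Leftrightarrow$ (3) whose hypotheses now hold.

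The main obstacle is the construction of the tilting module in (2) $\Rightarrow$ (3): one must cleanly separate the finite-projective-dimension tilting part of $\mathcal{A}$ from the strongly Gorenstein projective part and verify all three tilting axioms for the candidate $T$. A secondary delicate point is, in the second block, ensuring that the strongly Gorenstein projective module can be chosen simultaneously to lie in $\mathcal{A}$, to contain $\Omega^n G$ as a direct summand, and to realize the $\bot_\infty$-equality with $\Omega^n G$; this requires care in the Bennis--Mahdou construction applied inside the fixed cotorsion pair.
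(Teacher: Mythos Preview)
Your cycle $(1)\Rightarrow(2)$ is fine and matches the paper. The serious problems are in $(3)\Rightarrow(1)$, in your construction of $\mathcal{X}$ for $(2)\Rightarrow(3)$, and in the implications $(4)\Rightarrow(5)$ and $(5)\Rightarrow(6)$.

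For $(3)\Rightarrow(1)$, the remark that strongly Gorenstein projectives have projective dimension $0$ or $\infty$ does not do the work. What you must show is that every $K\in\mathcal{K}_{\mathfrak{C}}$ lies in ${}^{\bot}(T^{\bot_\infty})$; the inclusion $K\in T^{\bot_\infty}$ is immediate, but the other direction is not. The paper's argument takes an arbitrary $M\in T^{\bot_\infty}$, uses Bazzoni's result that $M$ admits an $\Add T$-resolution of length $n$, and dimension-shifts using $\pd_R K\le n$ to force $\Ext^{\ge 1}_R(K,M)=0$. Your sketch contains no mechanism for this step.

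For $(2)\Rightarrow(3)$ and the second block, the heart of the matter is producing, for each $L\in\mathcal{A}$, a strongly Gorenstein projective module $N_L$ that \emph{lies in $\mathcal{A}$} and contains $\Omega^n L$ as a summand. A bare application of Bennis--Mahdou to an arbitrary Gorenstein-projective approximation $Q$ of $L$ gives a strongly Gorenstein projective containing $Q$, but there is no reason that module belongs to $\mathcal{A}$, and without $\mathcal{X}\subseteq\mathcal{A}$ you cannot get the inclusion $\mathcal{B}\subseteq\mathcal{X}^{\bot_\infty}$. The paper resolves this with a dedicated construction (its Lemma~\ref{lem: 1803261.1}): one builds a right $\mathcal{K}_{\mathfrak{C}}$-coresolution of $L$ inside the cotorsion pair, takes $n$th syzygies levelwise to obtain a totally acyclic complex of projectives whose cycles all lie in $\mathcal{A}$, and sets $N_L$ equal to the direct sum of those cycles. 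This is the missing idea.

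This same gap breaks your $(4)\Rightarrow(5)$: from $\mathcal{B}=T^{\bot_\infty}\cap N^{\bot_\infty}$ you correctly deduce $N\in\mathcal{A}$, but nothing in the hypothesis forces $\Omega^n G$ to be a summand of $N$ (or of $N\oplus P$). The paper does not attempt this direct implication; it routes through $(4)\Rightarrow(3)\Rightarrow(2)\Rightarrow(5)$, the last step again using Lemma~\ref{lem: 1803261.1}. Finally, in $(5)\Rightarrow(6)$ the inclusion $(\Omega^n G)^{\bot_\infty}\subseteq M^{\bot_\infty}$ is not a consequence of $\Omega^n G\mid M$ (that gives the \emph{opposite} inclusion); one needs that $M\in\mathcal{A}={}^{\bot}(G^{\bot_\infty})$ together with Lemma~\ref{lem: 1.09}(1) to push $\Ext^{>n}(G,-)=0$ up to $\Ext^{>n}(M,-)=0$, and then the strongly Gorenstein projective periodicity to descend to all degrees.
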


%As consequences of Theorem \ref{thm: 181.100}, we give a necessary and sufficient condition for
%any self-orthogonal Gorenstein projective $R$-module to be projective; moreover, we get that every Gorenstein projective $R$-module is Gorenstein flat provided that every Gorenstein tilting class is definable.
%See Propositions \ref{prop: 2017081501} and \ref{prop: 20170816002}.

Let $\mathcal{P}^{<\infty}$ ($\mathcal{GP}^{<\infty}$) be the class of finitely generated modules with finite projective (Gorenstein projective) dimension.
Recall that the left little finitistic dimension of a ring $R$ is
\begin{center}{findim$(R)$ $=$ $\sup\{\textrm{pd}_{R}M \ | \ M\in{\mathcal{P}^{<\infty}}\}$.}
\end{center}

 As the first application of Theorem \ref{thm: 181.100}, the next result characterizes when the little finitistic dimension is finite. See \ref{5.2} for the proof.

\begin{thm}\label{thm:1706220201} Let $R$ be a left Noetherian ring.  Then the following are equivalent:
\begin{enumerate}

\item {\rm{findim}}$(R)$ $<$ $\infty$.

 \item $\mathcal{K}_\mathfrak{C}$ $=$ $\Add T$, where $\mathfrak{C}=(^\bot((\mathcal{P}^{<\infty})^{\bot_\infty}),(\mathcal{P}^{<\infty})^{\bot_\infty})$ and $T$ is a tilting $R$-module.

\item  $\mathcal{K}_\mathfrak{C}$ $=$ $\Add T$, where $\mathfrak{C}=(^\bot((\mathcal{GP}^{<\infty})^{\bot_\infty}),(\mathcal{GP}^{<\infty})^{\bot_\infty})$ and $T$ is a tilting $R$-module.
    \end{enumerate}
\end{thm}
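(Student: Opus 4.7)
My strategy is to combine Theorem~\ref{thm: 181.100} of this paper with Angeleri's characterization \cite[Theorem~2.6]{Anegeleri}, which states that over a left Noetherian ring $R$ one has findim$(R)<\infty$ if and only if $(\mathcal{P}^{<\infty})^{\bot_\infty}$ is a tilting class. I would prove the three-way equivalence via (1) $\Leftrightarrow$ (2) and (1) $\Leftrightarrow$ (3).

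For (1) $\Rightarrow$ (2): if findim$(R)=n$, Angeleri's theorem produces an $n$-tilting module $T$ with $T^{\bot_\infty}=(\mathcal{P}^{<\infty})^{\bot_\infty}$, so $\mathfrak{C}$ is the tilting cotorsion pair of $T$; the result from \cite{LAHFUCol} cited in the introduction then yields $\mathcal{K}_\mathfrak{C}=\Add T$. The implications (2) $\Rightarrow$ (1) and (3) $\Rightarrow$ (1) run in parallel: applying Theorem~\ref{thm: 181.100}, direction (1) $\Rightarrow$ (2), to the cotorsion pair at hand yields $\mathcal{A}\subseteq\mathcal{GP}_n$. Since $\mathcal{P}^{<\infty}\subseteq\mathcal{A}$ by the orthogonality definitions (directly in (2), via $\mathcal{P}^{<\infty}\subseteq\mathcal{GP}^{<\infty}$ in (3)), every finitely generated $R$-module of finite projective dimension has Gorenstein projective dimension at most $n$; Holm's equality $\Gpd=\pd$ on modules of finite projective dimension upgrades this to $\pd\le n$, so findim$(R)\le n$.

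The main obstacle is (1) $\Rightarrow$ (3). I would apply Theorem~\ref{thm: 181.100}, direction (2) $\Rightarrow$ (1), to the cotorsion pair $\mathfrak{C}=({}^\bot((\mathcal{GP}^{<\infty})^{\bot_\infty}),(\mathcal{GP}^{<\infty})^{\bot_\infty})$, which requires checking (a) $\mathcal{K}_\mathfrak{C}\subseteq\mathcal{P}_n$, (b) $\mathcal{A}\subseteq\mathcal{GP}_n$, and (c) $\mathcal{K}_\mathfrak{C}$ is closed under direct sums. For (b) I would use that over a left Noetherian ring with findim$(R)=n$ the finitistic Gorenstein projective dimension is also $n$, combined with Holm's criterion $\Gpd M\le n$ iff $\Ext^{i}(M,L)=0$ for all $i>n$ and all $L$ of finite projective dimension, and then exploit the Ext-vanishing built into $\mathcal{A}$; the delicate point is that a module of finite projective dimension need not itself lie in $\mathcal{B}$, so one has to route the Ext-vanishing indirectly through Gorenstein-projective approximations. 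For (a), a module in $\mathcal{K}_\mathfrak{C}$ is simultaneously self-orthogonal and right Ext-orthogonal to all of $\mathcal{GP}^{<\infty}$, which together with findim$(R)=n$ should force $\pd\le n$. Closure under direct sums (c) follows because the cotorsion pair is generated by the set of finitely generated modules in $\mathcal{GP}^{<\infty}$ (a set by the Noetherian hypothesis) and $\mathcal{B}$ is closed under direct sums, so this closure is inherited by $\mathcal{A}\cap\mathcal{B}$. I expect (a) and (b) to demand the most technical work.
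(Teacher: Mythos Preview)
Your treatment of $(1)\Leftrightarrow(2)$ and $(3)\Rightarrow(1)$ matches the paper's. The difference lies in $(1)\Rightarrow(3)$: you aim to verify condition~(2) of Theorem~\ref{thm: 181.100} directly (the three conditions you label (a), (b), (c)), whereas the paper verifies condition~(5). Condition~(c) you handle correctly. But your arguments for (a) and (b) have real gaps.

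For (b), Holm's criterion \cite[Theorem~2.20]{H} only says that \emph{among modules of finite Gorenstein projective dimension}, $\Gpd M\le n$ is equivalent to $\Ext^{>n}(M,L)=0$ for all $L$ of finite projective dimension. For an arbitrary $M\in\mathcal{A}$ you have no a~priori bound on $\Gpd M$, so the Ext-vanishing you can extract from $\mathcal{A}={}^\bot\mathcal{B}$ does not by itself yield $\Gpd M\le n$. The paper bypasses this by Lemma~\ref{lem: 1.09}(3): once $\mathcal{GP}^{<\infty}\subseteq\mathcal{GP}_n$ (which does follow from findim$(R)=n$ via Lemma~\ref{lem:17062201}), the single module $G=\bigoplus G_i$ lies in $\mathcal{GP}_n$, and then ${}^\bot(G^{\bot_\infty})\subseteq\mathcal{GP}_n$ automatically. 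This uses that Gorenstein projectives are closed under transfinite extensions, which is the non-trivial input your sketch omits.

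For (a) the difficulty is more serious. Knowing $\Gpd H\le n$ and $H\in\mathcal{B}=(\mathcal{GP}^{<\infty})^{\bot_\infty}$ does not obviously force $\pd H\le n$: the syzygy $\Omega^n H$ is Gorenstein projective but need not be finitely generated, so you cannot feed it back into $\mathcal{GP}^{<\infty}$. The paper's route is to construct, via Lemma~\ref{lem: 2017082101}, for each finitely generated $\Omega^n G_i$ a strongly Gorenstein projective $N_i$ that is a direct sum of \emph{finitely generated} Gorenstein projectives; then $N=\bigoplus N_i$ is strongly Gorenstein projective, has $\Omega^n G$ as a summand, and lies in $\mathcal{A}$ because its finitely generated summands belong to $\mathcal{GP}^{<\infty}$. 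This is exactly condition~(5) of Theorem~\ref{thm: 181.100}, which via Proposition~\ref{lem: 1.9} delivers both $\mathcal{A}\subseteq\mathcal{GP}_n$ and $\mathcal{K}_\mathfrak{C}\subseteq\mathcal{P}_n$ at once. The explicit construction of $N$ from finitely generated pieces is the technical heart of $(1)\Rightarrow(3)$, and your proposal does not supply a substitute for it.
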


Note that the equivalence of conditions (1) and (2) in Theorem \ref{thm:1706220201} is due to  Angeleri-H\"{u}gel and Trlifaj, where they proved that for any left Noetherian ring $R$, {\rm{findim}}$(R)$ $<$ $\infty$ if and only if $(\mathcal{P}^{<\infty})^{\bot_\infty}$ is a tilting class (see \cite[Theorem 2.6]{Anegeleri}).

%The famous Finitistic Dimension Conjecture states that the little
%finitistic dimension findim$(R)$ is finite for every Artin algebra $R$ (see \cite[p.409]{ARS} and \cite{Bass}). Theorem \ref{thm:17062202} above gives criteria for the validity of this conjecture.
%The Finitistic Dimension Conjecture is also related to many other homological conjectures. For instance, Mantese and Reiten  \cite{Mantese} showed that if the Finitistic Dimension Conjecture holds for a ring $R$, then
%Wakamatsu Tilting Conjecture holds for $R$, which asserts that every Wakamatsu tilting left $R$-module of finite projective dimension is tilting.
%We refer the reader to \cite{Anegeleri,Celikbas,Christensen,Mantese,Wei2011,XiCC,Zimmermann} for more information on the relation
%of these homological conjectures.
%
%The second application of Theorem \ref{thm:17062202} is the next result which give some new criteria for the validity of the Wakamatsu Tilting Conjecture. See Corollaries

The famous Finitistic Dimension Conjecture states that the little
finitistic dimension findim$(R)$ is finite for every Artin algebra $R$ (see \cite{ARS,Bass}). Theorem \ref{thm:1706220201} above gives criteria for the validity of this conjecture.

Wakamatsu in \cite{Wakamatsu} introduced and studied the so-called generalized tilting modules,
which are usually called Wakamatsu tilting modules, see \cite{Beli and Reiten,Mantese}. Tilting modules are Wakamatsu tilting
modules, but the converse is not true  in general  because a Wakamatsu tilting module can have infinite projective dimension.
As the second application of Theorem \ref{thm: 181.100}, we have the next result which gives equivalent formulations for a Wakamatsu tilting module to be a
tilting module. See Theorem \ref{prop2017081202}.

\begin{thm} \label{prop2017081202'} Let $R$ be a ring and $\omega$ a Wakamatsu tilting $R$-module. Fix an exact sequence $0\ra R\ra \omega_0 \xrightarrow{f_0} \cdots \ra \omega_i\xrightarrow{f_i}\cdots$ with $\omega_i$ $\in$ {\rm add$\omega$}
and $\ker (f_i)$ $\in$ $^{\bot_\infty}\omega$ for  $i$ $\geq$ $0$. If we set $A$ $=$ ${\bigoplus\limits_{i\geqslant0}}\ker (f_i)$, then the following are equivalent for any nonnegative integer $n$:
\begin{enumerate}
\item $\omega$ is an $n$-tilting $R$-module.
\item $\omega^{\bot_> n}$ $=$ $A^{\bot_> n}$ and $\mathcal{K}_{\mathfrak{C}}$ $=$ $\Add T$, where $\mathfrak{C}$ $=$ $(^\bot(A^{\bot_\infty}),A^{\bot_\infty})$ and $T$ is an $n$-tilting $R$-module.

\item $\mathcal{K}_{\mathfrak{C}}$ $=$ $\Add T$, where $\mathfrak{C}$ $=$ $(^\bot((\omega\oplus A)^{\bot_\infty}),(\omega\oplus A)^{\bot_\infty})$ and $T$ is an $n$-tilting $R$-module.

\end{enumerate}
\end{thm}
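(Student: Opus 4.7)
The plan is to apply Theorem~\ref{thm: 181.100} to the cotorsion pairs $\mathfrak{C}$ appearing in (2) and (3), extracting the projective-dimension bound $\mathcal{K}_\mathfrak{C}\subseteq\mathcal{P}_n$, and to combine this with the Wakamatsu tilting hypotheses (namely the self-orthogonality $\Ext_R^{\ge 1}(\omega,\omega^{(I)})=0$ and the condition $K_i:=\ker f_i\in{}^{\bot_\infty}\omega$) to verify the three $n$-tilting axioms for $\omega$.  The main technical device is the dimension-shifting identity
\[
\Ext^j_R(K_i,X)\cong\Ext^{j+1}_R(K_{i+1},X),\qquad X\in\omega^{\bot_\infty},\ j\ge 1,
\]
read off from the short exact sequences $0\to K_i\to\omega_i\to K_{i+1}\to 0$ using $\omega_i\in\add\omega$ and the Wakamatsu self-orthogonality; an analogous identity holds for $X\in A^{\bot_\infty}$.

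For the forward implications $(1)\Rightarrow(2)$ and $(1)\Rightarrow(3)$, assume $\omega$ is $n$-tilting.  The classical containment ${}^{\bot_\infty}(\omega^{\bot_\infty})\subseteq\mathcal{P}_n$ combined with the dim-shift gives $K_i\in{}^{\bot_\infty}(\omega^{\bot_\infty})$ and hence $\pd_R K_i\le n$ for every $i$, so $\pd_R A\le n$.  This already forces $\omega^{\bot_{>n}}=A^{\bot_{>n}}$, and a second dim-shift (now terminating by $\pd_R A\le n$) yields $\omega^{\bot_\infty}=A^{\bot_\infty}=(\omega\oplus A)^{\bot_\infty}$.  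Both cotorsion pairs in (2) and (3) therefore coincide with the $n$-tilting cotorsion pair of $\omega$, whose kernel is $\Add\omega$ (so (2) and (3) hold with $T=\omega$).

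For the reverse implications $(3)\Rightarrow(1)$ and $(2)\Rightarrow(1)$, Theorem~\ref{thm: 181.100} yields $\mathcal{K}_\mathfrak{C}\subseteq\mathcal{P}_n$.  In case (3), the very form $\mathcal{B}=(\omega\oplus A)^{\bot_\infty}$ places $\omega$ in ${}^\bot\mathcal{B}$, while $\omega\in\mathcal{B}$ follows from Wakamatsu self-orthogonality and $K_i\in{}^{\bot_\infty}\omega$; hence $\omega\in\mathcal{K}_\mathfrak{C}=\Add T$ and $\pd_R\omega\le n$.  In case (2), all injectives lie in $\mathcal{B}=A^{\bot_\infty}$, so the hypothesis $\omega^{\bot_{>n}}=A^{\bot_{>n}}$ forces $\Ext^{>n}_R(\omega,I)=0$ for every injective $I$, hence $\pd_R\omega\le n$, and the Wakamatsu axioms again place $\omega$ in $\mathcal{K}_\mathfrak{C}=\Add T$.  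In both cases, the bound $\pd_R\omega\le n$ closes the dim-shift to yield $\omega^{\bot_\infty}=A^{\bot_\infty}$, so $T^{\bot_\infty}=\omega^{\bot_\infty}$; together with $\omega\in\Add T$ this forces $\Add T=\Add\omega$, and the length-$n$ coresolution of $R$ coming from the $n$-tilting structure of $T$ is also an $\Add\omega$-coresolution, verifying the third tilting axiom for $\omega$.  The other two axioms follow from $\pd_R\omega\le n$ and the Wakamatsu self-orthogonality.

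The main obstacle is that the iterated dim-shift $\Ext^j_R(K_i,X)\cong\Ext^{j+k}_R(K_{i+k},X)$ collapses to a vanishing only in the presence of a uniform projective-dimension bound on the syzygies $K_i$; supplying such a bound via Theorem~\ref{thm: 181.100} is what unlocks the argument on both sides.  A further delicate step is the passage from the equality of tilting classes $T^{\bot_\infty}=\omega^{\bot_\infty}$ to the equality of additive closures $\Add T=\Add\omega$, which hinges on first locating $\omega$ inside $\Add T$ via $\omega\in\mathcal{K}_\mathfrak{C}$; this is precisely where the asymmetric hypothesis $\omega^{\bot_{>n}}=A^{\bot_{>n}}$ in (2) is indispensable, since without it one cannot bridge from the $A$-defined cotorsion pair back to $\omega$.
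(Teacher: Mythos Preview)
Your proposal contains several genuine gaps.

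\textbf{The vacuous step in $(2)\Rightarrow(1)$.} You argue that injectives lie in $A^{\bot_\infty}\subseteq A^{\bot_{>n}}=\omega^{\bot_{>n}}$, whence $\Ext^{>n}_R(\omega,I)=0$ for every injective $I$, and conclude $\pd_R\omega\le n$. But $\Ext^{j}_R(\omega,I)=0$ for \emph{all} $j\ge 1$ whenever $I$ is injective, so the displayed vanishing is automatic and yields no bound on $\pd_R\omega$. This step simply does not work.

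\textbf{The non-terminating dimension shift in $(1)\Rightarrow(2),(3)$.} You claim that the identity $\Ext^j_R(K_i,X)\cong\Ext^{j+1}_R(K_{i+1},X)$ for $X\in\omega^{\bot_\infty}$ gives $K_i\in{}^{\bot_\infty}(\omega^{\bot_\infty})$. Iterating forward never terminates; iterating backward to $K_0=R$ only yields $\Ext^j_R(K_i,X)=0$ for $j>i$, not for all $j\ge 1$. The paper obtains $\pd_R K_i\le n$ by a \emph{different} shift: since $\omega$ is $n$-tilting, every projective $P$ has a coresolution $0\to P\to T_0\to\cdots\to T_n\to 0$ with $T_j\in\Add\omega\subseteq A^{\bot_\infty}$ (the latter because $K_i\in{}^{\bot_\infty}\omega$ and $A^{\bot_\infty}$ is closed under direct sums, each $K_i$ being in $\mathcal{FP}_\infty$). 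Shifting along this coresolution gives $\Ext^{>n}_R(K_i,P)=0$; combined with $\pd_R K_i<\infty$ (from the finite segment $0\to R\to\omega_0\to\cdots\to\omega_{i-1}\to K_i\to 0$) this forces $\pd_R K_i\le n$.

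\textbf{The unproved equality $\omega^{\bot_\infty}=A^{\bot_\infty}$.} Even granting $\pd_R K_i\le n$, only the inclusion $\omega^{\bot_\infty}\subseteq A^{\bot_\infty}$ follows. For the reverse you would need $\omega\in{}^{\bot_\infty}(A^{\bot_\infty})$, but from $X\in A^{\bot_\infty}$ one only deduces $\Ext^{\ge 1}_R(\omega_i,X)=0$ for the specific $\omega_i\in\add\omega$ appearing in the fixed sequence; nothing forces $\omega$ itself to lie in $\add(\oplus_i\omega_i)$. Consequently your route to $\Add T=\Add\omega$ in both reverse implications is blocked.

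\textbf{How the paper proceeds instead.} For $(1)\Rightarrow(2)$ the paper does \emph{not} claim $T=\omega$: it verifies ${}^\bot(A^{\bot_\infty})\subseteq\mathcal{P}_n$ (via $\pd_R A\le n$ as above) and that $\mathcal{K}_A$ is closed under direct sums, then invokes Theorem~\ref{thm: 181.100} to produce \emph{some} $n$-tilting $T$. For the reverse $(3)\Rightarrow(1)$ the paper avoids comparing $\Add T$ with $\Add\omega$ altogether: Theorem~\ref{thm: 181.100} gives ${}^\bot((\omega\oplus A)^{\bot_\infty})\subseteq\mathcal{GP}_n$, so $\im(f_n)=K_{n+1}\in\mathcal{GP}_n$ and hence $\Ext^{n+1}_R(K_{n+1},R)=0$. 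Shifting along $0\to R\to\omega_0\to\cdots\to\omega_{n-1}\to K_n\to 0$ (using $K_{n+1}\in{}^{\bot_\infty}\omega$) identifies this with $\Ext^1_R(K_{n+1},K_n)$, so $0\to K_n\to\omega_n\to K_{n+1}\to 0$ splits, $K_n\in\add\omega$, and the truncated sequence furnishes $(T3)$ directly.
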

As a consequence of Theorem \ref{prop2017081202'}, we characterize when a Wakamatsu tilting module of
finite projective dimension is a tilting module, see Corollary \ref{cor:2018081601}.

%whether a Wakamatsu tilting module of
%finite projective dimension must be a tilting module
%
%a new criterion for the validity of the Wakamatsu Tilting Conjecture. See Corollary \ref{cor:2018081601}.
%
%
%As the second application of Theorem \ref{thm: 181.100}, we have the next result which gives a new criterion for the validity of the Wakamatsu Tilting Conjecture. See Corollary \ref{cor:2018081601}.
%
%\begin{thm} \label{thm:1.2} Assume that $\omega$ is a Wakamatsu tilting  $R$-module with finite projective dimension. Fix an exact sequence $0\ra R\ra \omega_0 \xrightarrow{f_0} \cdots \ra \omega_i\xrightarrow{f_i}\cdots$ with $\omega_i$ $\in$ {\rm add$\omega$}
%and $\omega$  $\in$ $\ker(f_i)^{\bot_\infty}$ for  $i$ $\geq$ $0$. Let $A$ $=$ ${\bigoplus\limits_{i\geqslant0}}\ker (f_i)$ and $\mathfrak{C}$ $=$ $(^\bot(A^{\bot_\infty}),A^{\bot_\infty})$. Then Wakamatsu Tilting Conjecture holds if and only if $\mathcal{K}_{\mathfrak{C}}$ $=$ $\Add T$, where $T$ is a tilting $R$-module.
%\end{thm}

Recall that a ring $R$ is called \emph{Gorenstein} (or \emph{Iwanaga-Gorenstein}) \cite{Iwer} if it is both left and right Noetherian and $R$ has finite
self-injective dimension on either side. In the case of commutative rings, this definition of
Gorenstein rings coincides with Gorenstein rings of finite Krull dimension originally defined by Bass in \cite{Bass1963}. For more details about Gorenstein rings, see \cite[Section 3]{AM2002} and \cite[Chapter 9]{EJ}.

As the third application of Theorem \ref{thm: 181.100}, the following result gives some new characterizations of Gorenstein rings. See Propositions \ref{prop20170806001} and \ref{prop20170509001}.

\begin{thm} \label{thm:1.4} Let $R$ be a commutative ring.  Then the following are equivalent:
\begin{enumerate}

\item $R$ is a Gorenstein ring.

 \item $\mathcal{K}_\mathfrak{C}$ $=$ $\Add T$, where $\mathfrak{C}$ $=$ $(^\bot\mathcal{GI}$, $\mathcal{GI})$ and $T$ is a tilting $R$-module.

\item  $\mathcal{K}_\mathfrak{C}$ $=$ $\Add T$, where $\mathfrak{C}$ $=$ $(R$-{\rm Mod}, $\mathcal{I})$ and $T$ is a tilting $R$-module.

\noindent Moreover, if  $R$ is a commutative Noetherian ring of finite Krull dimension, then the above conditions are  equivalent to

 \item For any exact sequence  $ \cdots\ra T_2 \xrightarrow{d_2} T_1 \xrightarrow{d_1}T_0 \xrightarrow{d_0} \cdots$ of tilting $R$-modules, $\mathcal{K}_{\mathfrak{C}}$ $=$ $\Add T$, where $\mathfrak{C}$ $=$ $(^\bot((\oplus\ker(d_i))^{\bot_\infty}),(\oplus\ker(d_i))^{\bot_\infty})$  and $T$ is a tilting $R$-module.

 \item For any exact sequence  $ \cdots\ra T_2 \xrightarrow{d_2} T_1 \xrightarrow{d_1}T_0 \xrightarrow{d_0} \cdots$ of tilting $R$-modules, each $\ker(d_i)$ has finite Gorenstein projective dimension.
 \end{enumerate}

\end{thm}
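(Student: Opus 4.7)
The plan is to use Theorem \ref{thm: 181.100} as the central engine, combined with the classical characterization of a commutative Gorenstein ring (Noetherian with finite self-injective dimension on both sides, equivalently every module has finite Gorenstein projective dimension bounded by $\id_{R}R$). I would first establish $(1)\Leftrightarrow (3)$ by a Matlis-theoretic computation, then reduce $(2)$ to $(3)$ via a kernel identification, and finally handle $(1)\Leftrightarrow (4)\Leftrightarrow (5)$ under the Noetherian finite Krull dimensional hypothesis.

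For $(1)\Leftrightarrow (3)$, the cotorsion pair in $(3)$ is trivially complete hereditary with kernel $\mathcal{I}$. Under $(1)$, Matlis theory yields $\mathcal{I}=\Add E$ where $E=\bigoplus_{\mathfrak{p}}E(R/\mathfrak{p})$ runs over the spectrum of $R$, and the finite injective coresolution of $R$ together with injectivity of each summand of $E$ makes $E$ an $n$-tilting module for $n=\id_{R}R$. Conversely, $\mathcal{I}=\Add T$ forces $\mathcal{I}$ to be closed under arbitrary direct sums, hence $R$ Noetherian by Bass's theorem, while the tilting coresolution $0\to R\to T_{0}\to \cdots\to T_{n}\to 0$ with $T_{i}\in \mathcal{I}$ exhibits $\id_{R}R\leq n$.

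For $(1)\Leftrightarrow (2)$, the crux is the identification $\mathcal{K}_\mathfrak{C}=\mathcal{GI}\cap {^{\bot}\mathcal{GI}}=\mathcal{I}$. The inclusion $\subseteq$ holds in general: if $G$ is in both classes, the Gorenstein injective structure supplies an exact sequence $0\to G\to I\to G'\to 0$ with $I$ injective and $G'\in \mathcal{GI}$, and $\Ext^{1}(G',G)=0$ splits it, so $G$ is injective. The reverse inclusion becomes available once $R$ is Gorenstein, as then injectives have finite projective dimension and lie in ${^{\bot}\mathcal{GI}}$. Conversely, under $(2)$ the tilting $T$ must be injective by the splitting argument, so its tilting coresolution gives $\id_{R}R<\infty$; combining the bound ${^{\bot}\mathcal{GI}}\subseteq \mathcal{GP}_{n}$ from Theorem \ref{thm: 181.100}(2) with the direct sum closure of $\mathcal{K}_\mathfrak{C}$ yields the Noetherian property, giving $(1)$. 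Once $\mathcal{K}_\mathfrak{C}=\mathcal{I}$ is in force, $(2)$ reduces to $(3)$.

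For the Noetherian finite Krull dimensional case, $(1)\Rightarrow (5)$ is immediate, since over a Gorenstein ring every module has finite Gorenstein projective dimension. For $(5)\Rightarrow (4)$, set $K=\bigoplus_{i}\ker(d_{i})$ of finite Gorenstein projective dimension $n$; then $\Omega^{n}K$ is Gorenstein projective, and the standard countable-direct-sum trick produces a strongly Gorenstein projective module $M\in {^{\bot}(K^{\bot_{\infty}})}$ containing $\Omega^{n}K$ as a direct summand, so condition $(5)$ of Theorem \ref{thm: 181.100} applies and yields $\mathcal{K}_\mathfrak{C}=\Add T$ for a tilting $T$. The main obstacle is $(4)\Rightarrow (1)$: one must engineer an exact sequence of tilting modules rich enough that Theorem \ref{thm: 181.100} extracts a finite dimension bounding $\id_{R}R$; a natural candidate splices tilting pieces together using the finite Krull dimension so that some syzygy of $\bigoplus\ker(d_{i})$ picks up enough of $R$ to force $\id_{R}R<\infty$, and hence the Gorenstein property.
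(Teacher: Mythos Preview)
Your treatment of $(1)\Leftrightarrow(2)\Leftrightarrow(3)$ is essentially sound, but note two slips. First, in the splitting argument you want the \emph{left} part of the Gorenstein injective resolution, i.e.\ $0\to G'\to I\to G\to 0$ with $G'\in\mathcal{GI}$; then $G\in{^{\bot}\mathcal{GI}}$ gives $\Ext^{1}(G,G')=0$ and the sequence splits. With your sequence $0\to G\to I\to G'\to 0$ you would need $\Ext^{1}(G',G)=0$, which does not follow from $G\in{^{\bot}\mathcal{GI}}$. Second, the inclusion $\mathcal{I}\subseteq{^{\bot}\mathcal{GI}}$ holds over \emph{any} ring (directly from the definition of Gorenstein injective), so $\mathcal{K}_{\mathfrak C}=\mathcal{I}$ unconditionally and $(2)$ is literally the same hypothesis as $(3)$; you do not need the Gorenstein assumption for this. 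The paper proceeds exactly this way.

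The serious problems are in your cycle $(1)\Rightarrow(5)\Rightarrow(4)\Rightarrow(1)$. In $(5)\Rightarrow(4)$ you assert that $K=\bigoplus_i\ker(d_i)$ has finite Gorenstein projective dimension, but $(5)$ only gives finite $\Gpd$ for each $\ker(d_i)$ separately, with no uniform bound; the direct-sum trick that would produce such a bound requires $\bigoplus_i T_i$ to be tilting, which fails when the $T_i$ induce different tilting classes. Moreover, to invoke Theorem~\ref{thm: 181.100}(5) you must verify that $\mathcal{K}_{\mathfrak C}$ is closed under direct sums, and this is genuinely nontrivial here: $G^{\bot_\infty}$ is not closed under sums in general, and the argument hinges on the tilting property of the $T_i$. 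You do not address this. Finally, your $(4)\Rightarrow(1)$ is only a vague hope with no concrete construction.

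The paper runs the cycle the other way, $(1)\Rightarrow(4)\Rightarrow(5)\Rightarrow(1)$, which dissolves both obstructions. For $(1)\Rightarrow(4)$ the Gorenstein hypothesis supplies a uniform bound $n$ on $\Gpd$, and the paper then proves closure of $\mathcal{K}_G$ under direct sums by showing that each $H_j\in\mathcal{K}_G$ lies in every $T_i^{\bot_\infty}$ (these classes \emph{are} closed under sums, being tilting classes), and then dimension-shifts along the exact sequence to reduce $\Ext^k(\ker(d_i),\bigoplus H_j)$ to $\Ext^{n+k}(\ker(d_{i-n}),\bigoplus H_j)$, which vanishes because $\bigoplus H_j\in\mathcal{P}_n$ and $\Gpd\ker(d_{i-n})\le n$. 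The implication $(4)\Rightarrow(5)$ is then immediate from Theorem~\ref{thm: 181.100}. For $(5)\Rightarrow(1)$ the paper specializes $(5)$ to acyclic complexes of \emph{free} modules (free modules are tilting, and here the direct-sum trick does work since a sum of frees is free), deduces that all cycles of any acyclic complex of projectives are Gorenstein projective, and concludes via the Estrada--Fu--Iacob criterion that a commutative Noetherian ring of finite Krull dimension is Gorenstein iff every acyclic complex of projectives is totally acyclic. This last step is the missing idea in your $(4)\Rightarrow(1)$.
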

We note that the equivalence of conditions (1) and (2) in Theorem \ref{thm:1.4} is related to a result by Angeleri-H\"{u}gel, Herbera and Trlifaj \cite{LDT Til Gorenstein}. In their paper, they proved that a two-sided Noetherian ring $R$ is Gorenstein if and only if Gorenstein injective left and right $R$-modules form a tilting class, see \cite[Theorem 3.4]{LDT Til Gorenstein}.

We conclude this section by summarizing the contents of this paper. Section 2 contains some notations, definitions and lemmas for use throughout this paper. Section 3 is devoted to proving Theorem \ref{thm: 181.100}.
Section 4 is some applications of Theorem \ref{thm: 181.100}, including the proofs of Theorems \ref{thm:1706220201}-\ref{thm:1.4}.

\section {\bf Preliminaries}
Throughout this paper, $R$ is an associative ring with identity and $R$-Mod is the category of left $R$-modules. Unless otherwise stated, all $R$-modules are left $R$-modules.

Next we recall some basic definitions and properties needed in the sequel. For more details the reader can consult \cite{Anegeleri,EJ,GT}.

\vspace{2mm}
\noindent{\bf Notation.} Let $\mathcal{C}$ be a full subcategory of $R$-Mod and $n$ a nonnegative integer. The classes $\mathcal{C}^\bot$, $^\bot\mathcal{C}$, $\mathcal{C}^{\bot_{> n}}$, $\mathcal{C}^{\bot_{\infty}}$ and $^{\bot_{\infty}}\mathcal{C}$ of $\mathcal{C}$ are defined as follows:
$$\mathcal{C}^\bot=\{M\in R{\text-}{\rm Mod}\ |\ \Ext_{R}^1(C, M)=0\ for\ all\ C\in\mathcal{C}\},$$
$$^\bot\mathcal{C}=\{M\in R{\text-}{\rm Mod}\ |\ \Ext_{R}^1(M, C)=0\ for\ all\ C\in\mathcal{C}\},$$
$$\mathcal{C}^{\bot_{> n}}=\{M\in R{\text-}{\rm Mod}\ |\ \Ext_{R}^{n+i}(C, M)=0\ for\ all\ C\in\mathcal{C}\ and \ all\ i\geq1 \},$$
$$\mathcal{C}^{\bot_{\infty}}=\{M\in R{\text-}{\rm Mod}\ |\ \Ext_{R}^i(C, M)=0\ for\ all\ C\in\mathcal{C}\ and\ all\ i\geq1 \}.$$
$$^{\bot_{\infty}}\mathcal{C}=\{M\in R{\text-}{\rm Mod}\ |\ \Ext_{R}^i(M, C)=0\ for\ all\ C\in\mathcal{C}\ and\ all\ i\geq1 \}.$$
For $\mathcal{C}$ $=$ $\{$$C$$\}$, we write for short $C^\bot$, $^\bot C$, $C^{\bot_{> n}}$, $C^{\bot_{\infty}}$ and $^{\bot_{\infty}}C$ instead of $\{$$C$$\}^\bot$, $^\bot\{$$C$$\}$, $\{C\}^{\bot_{> n}}$, $\{$$C$\}$^{\bot_{\infty}}$ and $^{\bot_{\infty}}$$\{$$C$\}, respectively.

 If $\xymatrix{\cdots\ar[r]&P_{i}\ar[r]^{f_{i}}&\cdots\ar[r]&P_{0}\ar[r]^{f_{0}}&M\ar[r]&0}$ is a projective resolution of $M$, then $\im{(f_{i})}$ is called an \emph{$i$-th syzygy} of $M$. We denote it by $\Omega^{i}M$ $($$\Omega^{0}M$ $=$ $M$$)$.

Let $n$ be a nonnegative integer. For any $R$-module $M$, $\pd_{R}M$ is the projective dimension of $M$. For convenience, we set $\mathcal{P}_n$ the class of $R$-modules $M$ with $\pd_{R}M$ $\leq n$.

%$\mathcal{P}^{<\infty}=$ the class of finitely generated $R$-modules with $\Pd_{R}M$ $<\infty $.}
%\end{center}
Given an $R$-module $M$, we denote by $\Add M$ (resp. $\add M$) the class of all modules that
are isomorphic to direct summands of direct sums (resp. finite direct sums) of copies $M$.

 %We will use $\mathcal{H}(\mathcal{C})$ to denote the class $^\bot(\mathcal{C}^{\bot_{\infty}})$ $\cap$ $\mathcal{C}^{\bot_{\infty}}$ and $\mathcal{H}^{2}(\mathcal{C})$ the class $\mathcal{H}(\mathcal{H}(\mathcal{C}))$. It is easy to see that $(^{\bot}(M^{\bot_{\infty}}), M^{\bot_{\infty}})$ is also a hereditary cotorsion pair.
% It follows that if $A$ $\in$ $\mathcal{H}(M)$ and $B$ $\in$ $\mathcal{H}(M)$, then $\Ext_{R}^{i}(A,B)$ $=$ $0$ for all $i$ $\geq$ $1$.
%
% Let $\Add \mathcal{C}$ be the class of all modules that are isomorphic to a direct summand of a direct sum of modules in $\mathcal{C}$. For each $n$ $\geq$ $1$, we denote by $\Gen_{n}\mathcal{C}$ the class of all modules $M$ such that there is an exact sequence $C_{n}\ra \cdots\ra C_{1}\ra M\ra 0$ with each $C_{i}$ $\in$ $\Add \mathcal{C}$. $\Gen_{\infty}\mathcal{C}$ $=$ $\cap_{n\geq1}$$\Gen_{n}\mathcal{C}$ and $\Gen_{0}\mathcal{C}$ $=$ $R$-Mod. It is clear that $\Gen_{n}\mathcal{C}$ is closed under direct sums. Let $T$ be an $R$-module, it is easy to see that $\Gen_{n}T$ is just the same concept defined in \cite{B} since one can check that $\Gen_{n}T$ $=$ $\Gen_{n}\Add T$.

\vspace{2mm}
\noindent{\bf  Cotorsion pairs.}  Let $\mathcal{A}$ and $\mathcal{B}$ be classes in $R$-Mod.
  Recall that a pair $\mathfrak{C}=(\mathcal{A},\mathcal{B})$ is called a \emph{cotorsion pair} \cite{GT,SL} if $\mathcal{A}$ $=$ $^\bot\mathcal{B}$ and $\mathcal{B}$ $=$ $\mathcal{A}^\bot$. The class $\mathcal{K}_\mathfrak{C}=\mathcal{A}\cap \mathcal{B}$ is called the kernel of $\mathfrak{C}$.

   A cotorsion pair ($\mathcal{A}$, $\mathcal{B}$) is said to be
\emph{hereditary} \cite{GT} if $\Ext_{R}^{i}(A,B)$ $=$ $0$ for all $i$ $\geq$ $1$, $A$ $\in$ $\mathcal{A}$ and $B$ $\in$ $\mathcal{B}$, equivalently, if
whenever $0\ra A_{3}\ra A_{2} \ra A_{1}\ra 0$ is exact with $A_{2}$, $A_{1}$ $\in$ $\mathcal{A}$, then $A_{3}$ is also in $\mathcal{A}$, or equivalently, if whenever $0\ra B_{1}\ra B_{2} \ra B_{3}\ra 0$ is exact with $B_{1}$, $B_{2}$ $\in$ $\mathcal{B}$, then $B_{3}$ is also in $\mathcal{B}$.
%Let $\mathcal{X}$ be a subcategory of $R$-Mod and $M$ an $R$-module. A morphism $\phi: M\rightarrow X$ with
%$X\in \mathcal{X}$ is called an \emph{$\mathcal{X}$-preenvelope}
%(or a \emph{left $\mathcal{X}$-approximation}) of $M$
%if for any morphism $f:M\rightarrow X'$ with $X'\in\mathcal{X}$,
%there is a morphism $g:X\rightarrow X'$ such that $g\phi=f$. A
%monomorphism $\phi: M\rightarrow X$ with $X\in \mathcal{X}$ is said to be a
%\emph{special $\mathcal{X}$-preenvelope} of $M$ if  ${\rm
%Coker}(\phi)\in {^{\perp}\mathcal{X}}$. Dually we have the definitions of
%an $\mathcal{X}$-precover (or a right \emph{$\mathcal{X}$-approximation}) and a special $\mathcal{X}$-precover.

 A cotorsion pair $(\mathcal{A},\mathcal{B})$ is \emph{complete} \cite{GT} if one of the following two equivalent conditions holds:
\begin{itemize}
\item For each $R$-module $M$, there is an exact sequence $0\ra M\ra B\ra L\ra 0$ with $B$ $\in$ $\mathcal{B}$ and $L$ $\in$ $\mathcal{A}$.
\item For each $R$-module $M$, there is an exact sequence $0\ra D\ra C\ra M\ra 0$ with $C$ $\in$ $\mathcal{A}$ and $D$ $\in$ $\mathcal{B}$.
\end{itemize}

A cotorsion
pair $(\mathcal{A},\mathcal{B})$  is \emph{generated by a set} \cite{GT} provided that there is a set $\mathcal{S}$ of $R$-modules such that $\mathcal{S}^\bot=\mathcal{B}$ $($i.e., $(\mathcal{A},\mathcal{B})$ $=$ $(^\bot(\mathcal{S}^\bot), \mathcal{S}^\bot)$$)$. In the literature, this is sometimes called the cotorsion pair\emph{ cogenerated} by $S$. Here, however, we use the terminology from \cite{GT}. By \cite[Theorem 3.2.1]{GT}, each cotorsion pair generated by a set is complete. Moreover, we have the following lemma.

\begin{lem} \label{lem: 20170819001} Let $R$ be a ring and $M$ an $R$-module. Then $(^\bot(M^{\bot_{\infty}}), M^{\bot_{\infty}})$ is a complete hereditary cotorsion pair.
\end{lem}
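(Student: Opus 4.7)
The plan is to exhibit the pair as one generated by a set, which immediately yields completeness via \cite[Theorem 3.2.1]{GT}, and then to verify hereditariness by a short diagram chase with the long exact sequence of $\Ext$.

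First I would fix a projective resolution
$\cdots\to P_1\to P_0\to M\to 0$
and form the set $\mathcal{S}=\{\Omega^{i}M\mid i\geq 0\}$, with the convention $\Omega^{0}M=M$. The first key step is to identify $\mathcal{S}^{\bot}$ with $M^{\bot_{\infty}}$. This is a one-line dimension-shift computation: using the short exact sequences $0\to \Omega^{i+1}M\to P_i\to \Omega^{i}M\to 0$ and the vanishing of $\Ext_R^{k}(P_i,-)$ for $k\geq 1$, one obtains $\Ext_R^{1}(\Omega^{i}M,N)\cong \Ext_R^{i+1}(M,N)$ for every $i\geq 0$ and every $R$-module $N$. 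Consequently $N\in \mathcal{S}^{\bot}$ iff $\Ext_R^{j}(M,N)=0$ for all $j\geq 1$ iff $N\in M^{\bot_{\infty}}$.

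Since $\mathcal{S}$ is a set, the pair $({}^\bot(\mathcal{S}^{\bot}),\mathcal{S}^{\bot})=({}^\bot(M^{\bot_{\infty}}),M^{\bot_{\infty}})$ is a cotorsion pair generated by a set, and \cite[Theorem 3.2.1]{GT} gives completeness at once.

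Finally, to check that the pair is hereditary I would verify that $\mathcal{B}=M^{\bot_{\infty}}$ is closed under cokernels of monomorphisms (one of the equivalent formulations recalled just before the lemma). Given an exact sequence $0\to B_1\to B_2\to B_3\to 0$ with $B_1,B_2\in M^{\bot_{\infty}}$, the long exact sequence obtained by applying $\Hom_R(M,-)$ yields, for every $i\geq 1$,
\[
\Ext_R^{i}(M,B_2)\longrightarrow \Ext_R^{i}(M,B_3)\longrightarrow \Ext_R^{i+1}(M,B_1),
\]
whose outer terms vanish by hypothesis, so $B_3\in M^{\bot_{\infty}}$.

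There is no real obstacle here; the only point requiring a bit of care is to take the syzygies of a \emph{fixed} projective resolution so that $\mathcal{S}$ is an honest set, which is needed in order to invoke the Eklof--Trlifaj theorem. Everything else is a routine Ext computation.
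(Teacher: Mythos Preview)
Your proposal is correct and follows essentially the same approach as the paper: the paper takes $U=\bigoplus_{i\ge 0}\Omega^{i}M$ and observes $M^{\bot_\infty}=U^{\bot}$, then invokes \cite[Theorem 3.2.1]{GT} for completeness and checks hereditariness ``by the definition''. Your use of the set $\mathcal{S}=\{\Omega^{i}M\}$ in place of the single module $U$ is a cosmetic variation (since $U^{\bot}=\mathcal{S}^{\bot}$), and you simply spell out the hereditariness verification the paper leaves implicit.
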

\begin{proof} Let $M$ be an $R$-module. It is easy to check that $M^{\bot_{\infty}}$ $=$ $U^{\bot}$, where $U$ $=$ ${\bigoplus\limits_{i\geqslant0}}$$\Omega^{i}M$. Thus $(^{\bot}(M^{\bot_{\infty}}), M^{\bot_{\infty}})$ is a complete
cotorsion pair by \cite[Theorem 3.2.1]{GT}.  One can check that $(^\bot(M^{\bot_{\infty}}), M^{\bot_{\infty}})$ is also a hereditary cotorsion pair by the definition.
\end{proof}

  In the following, for an $R$-module $M$, we set $\mathcal{K}_{M}={^\bot(M^{\bot_{\infty}})}\cap M^{\bot_{\infty}}$. It follows from
  Lemma \ref{lem: 20170819001} that $\mathcal{K}_{M}=\mathcal{K}_\mathfrak{C}$ whenever $\mathfrak{C}=(^\bot(M^{\bot_{\infty}}), M^{\bot_{\infty}})$.
For more detailed information about cotorsion pairs, we refer the reader to \cite{EJ,GT}.

\vspace{2mm}
\noindent{\bf Gorenstein projective modules.} Following \cite{EJ GP,H}, an $R$-module $G$ is called \emph{Gorenstein projective} if there is an  exact sequence of projective $R$-modules $$\mathbf{P}:\xymatrix{\cdots\ar[r]& P^{-2}\ar[r]^{f^{-2}}&  P^{-1}\ar[r]^{f^{-1}} & P^{0}\ar[r]^{f^{0}}& P^{1}\ar[r]^{f^{1}}&\cdots}$$ such that $G$ $\cong$ $\ker(f^{0})$ and $\Hom_R(\mathbf{P},Q)$ is exact for every projective $R$-module $Q$. In this case, the complex $\mathbf{P}$ is also called a totally acyclic complex of projective $R$-modules. It is clear that each $\ker(f^{i})$ is Gorenstein projective.

Let $n$ be a nonnegative integer. The \emph{Gorenstein projective dimension}, $\Gpd_{R}G$, of an $R$-module $G$ is defined by declaring that $\Gpd_{R}G$ $\leq$ $n$ if, and only if there is an exact sequence $0\ra G_{n}\ra \cdots \ra G_{0}\ra G\ra 0$ with all $G_{i}$ Gorenstein projective (see \cite[Definition 2.8]{H}).
By \cite[Proposition 2.27]{H}, $\Gpd_R M$ $=$ $\pd_R M$ whenever $\pd_R M<\infty$, and so any Gorenstein projective module with finite projective dimension is projective.

In the following, we use $\mathcal{GP}_n$ to denote  the class of $R$-modules $M$ with $\Gpd_{R}M$ $\leq n$. By \cite[Proposition 2.7]{H}, we get that an $R$-module $G$ belongs to $\mathcal{GP}_n$ if and only if any $n$-th syzygy $\Omega^{n}G$ of $G$ is Gorenstein projective.

\begin{lem} \label{lem: 1.09} Let $G$ and $M$ be $R$-modules. Then the following are true for any nonnegative integer $n$:
\begin{enumerate}
\item If $\Ext_{R}^{i}(G,M)$ $=$ $0$ for all $i$ $\geq$ $n+1$, then $\Ext^{i}_{R}(W,M)$ $=$ $0$ for all $W$ $\in$ $^\bot(G^{\bot_{\infty}})$ and $i$ $\geq$ $n+1$.

\item If $G$ $\in$ $\mathcal{P}_n$, then $^\bot(G^{\bot_\infty})$ $\subseteq$ $\mathcal{P}_n$.
\item If $G$ $\in$ $\mathcal{GP}_n$, then $^\bot(G^{\bot_\infty})$ $\subseteq$ $\mathcal{GP}_n$.
\end{enumerate}
\end{lem}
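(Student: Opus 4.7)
For part (1), my plan is to reduce Ext-vanishing against $M$ to Ext-vanishing against a module which actually lies in $G^{\bot_\infty}$, so that the hypothesis $W\in{}^\bot(G^{\bot_\infty})$ becomes applicable. Concretely, take an injective coresolution $0\to M\to I^0\to I^1\to\cdots$ and let $M^n$ denote its $n$-th cosyzygy. Dimension shifting gives $\Ext^{n+j}_R(X,M)\cong\Ext^j_R(X,M^n)$ for every module $X$ and every $j\geq1$, so the assumption $\Ext^{n+j}_R(G,M)=0$ translates into $M^n\in G^{\bot_\infty}$. Since $({}^\bot(G^{\bot_\infty}),G^{\bot_\infty})$ is hereditary by Lemma~\ref{lem: 20170819001}, the assumption $W\in{}^\bot(G^{\bot_\infty})$ then forces $\Ext^j_R(W,M^n)=0$ for all $j\geq1$, and shifting back yields $\Ext^{n+j}_R(W,M)=0$.

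Part (2) is then an immediate corollary: when $G\in\mathcal{P}_n$, the hypothesis of (1) holds for every $M$, so $\pd_R W\leq n$ for every $W\in{}^\bot(G^{\bot_\infty})$.

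For part (3), I plan to combine part (1) with Holm's characterization $\Gpd_R G\leq n\Leftrightarrow \Ext^i_R(G,L)=0$ for all $i>n$ and all $L\in\mathcal{P}^{<\infty}$, which gives $\Ext^i_R(W,L)=0$ for all $i>n$ and such $L$. A dual dimension shift then strengthens this as follows: for any $N\in(\Omega^n G)^{\bot_\infty}$ one has $\Ext^{n+j}_R(G,N)\cong\Ext^j_R(\Omega^n G,N)=0$ for $j\geq1$, so part (1) gives $\Ext^{n+j}_R(W,N)=0$, and shifting the left argument yields $\Ext^j_R(\Omega^n W,N)=0$ for $j\geq1$. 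Hence $\Omega^n W\in{}^\bot((\Omega^n G)^{\bot_\infty})$ with $\Omega^n G\in\mathcal{GP}_0$, reducing the problem to the $n=0$ case: if $H\in\mathcal{GP}_0$ and $V\in{}^\bot(H^{\bot_\infty})$, then $V\in\mathcal{GP}_0$.

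This $n=0$ reduction is the main technical obstacle. My intended route is via deconstructibility: the cotorsion pair $({}^\bot(H^{\bot_\infty}),H^{\bot_\infty})$ is generated by the set $\{\Omega^i H:i\geq0\}$, each member of which is Gorenstein projective (closure of $\mathcal{GP}_0$ under syzygies). The filtration theorem for cotorsion pairs generated by a set then realises every module in ${}^\bot(H^{\bot_\infty})$ as a direct summand of a transfinite extension of copies of $R$ and of the $\Omega^i H$, all lying in $\mathcal{GP}_0$. Combining this with the closure of $\mathcal{GP}_0$ under direct summands and under transfinite extensions gives $V\in\mathcal{GP}_0$, and hence $\Omega^n W\in\mathcal{GP}_0$, i.e.\ $W\in\mathcal{GP}_n$. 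Justifying the transfinite closure of $\mathcal{GP}_0$ is the hard step, since mere Ext-vanishing of $V$ against $\mathcal{P}^{<\infty}$ does not by itself deliver the co-proper right projective coresolution required for $V$ to be Gorenstein projective.
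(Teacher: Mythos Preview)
Your proposal is correct and follows essentially the same route as the paper. Part~(1) is identical: pass to the $n$-th injective cosyzygy of $M$, observe it lies in $G^{\bot_\infty}$, and invoke hereditariness. For part~(2) you deduce the conclusion directly from~(1), whereas the paper instead observes $G\in\mathcal{P}_n$ implies ${}^\bot(G^{\bot_\infty})\subseteq{}^\bot((\mathcal{P}_n)^{\bot_\infty})=\mathcal{P}_n$ via the known cotorsion pair $(\mathcal{P}_n,\mathcal{P}_n^\bot)$; your argument is more self-contained. For part~(3) your reduction to the case $n=0$ via $\Omega^nW\in{}^\bot((\Omega^nG)^{\bot_\infty})$ is exactly the paper's argument, and your treatment of the $n=0$ case by the filtration theorem plus closure of $\mathcal{GP}_0$ under transfinite extensions is also the paper's route: the paper cites \cite[Theorem~3.2]{EnochGptrans} for the transfinite-extension closure and \cite[Corollary~3.2.4]{GT} for the filtration description of ${}^\bot(H^{\bot_\infty})$. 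The ``hard step'' you flag is thus an established result in the literature, not something you need to reprove; your mention of Holm's $\mathcal{P}^{<\infty}$-characterization is not actually needed once you go through $(\Omega^nG)^{\bot_\infty}$.
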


\begin{proof}$(1)$ Consider the exact sequence $0\ra M \ra E_{0} \ra \cdots \ra E_{n-1} \ra L\ra 0$ with each $E_{i}$ injective, we have $\Ext^{k}_{R}(G,L)$ $\cong$ $\Ext^{n+k}_{R}(G,M)$ for $k$ $\geq$ $1$.  By assumption, $\Ext^{k}_{R}(G,L)$ $=$ $0$ for $k$ $\geq$ $1$. Thus $L$ $\in$ $G^{\bot_\infty}$. Note that $(^\bot(G^{\bot_\infty}),G^{\bot_\infty})$ is a hereditary cotorsion pair.   Therefore $\Ext^{k}_{R}(W,L)$ $=$ $0$ for any $W$ $\in$ $^\bot(G^{\bot_{\infty}})$ and $k$ $\geq$ $1$. So $\Ext^{n+k}_{R}(W,M)$ $=$ $0$ for $k$ $\geq$ $1$ by noting that $\Ext^{k}_{R}(W,L)$ $\cong$ $\Ext^{n+k}_{R}(W,M)$, as desired.

$(2)$ Since $G$ $\in$ $\mathcal{P}_n$, we get that $^\bot(G^{\bot_\infty})$ $\subseteq$ $^\bot((\mathcal{P}_n)^{\bot_\infty})$. By \cite[Theorem 4.1.12]{GT}, $(\mathcal{P}_n, (\mathcal{P}_n)^\bot)$ is a hereditary cotorsion pair. So $^\bot((\mathcal{P}_n)^{\bot_\infty})$ $=$ $\mathcal{P}_n$.

$(3)$  Let $U$ $=$ ${\bigoplus\limits_{i\geqslant0}}$$\Omega^{i}G$. It is clear that $G^{\bot_{\infty}}$ $=$ $U^\bot$. If $n=0$ and $G$ is Gorenstein projective, then $U$ is also Gorenstein projective.  So the result holds by \cite[Theorem 3.2]{EnochGptrans} and \cite[Corollary 3.2.4]{GT}.

For $n\geq1$, we assume that $\Gpd_{R}G$ $\leq$ $n$.
Then $\Omega^{n}G$ is Gorenstein projective, and so every $R$-module in  $^\bot(({\Omega^{n} G})^{\bot_\infty})$ is Gorenstein projective. Let $L$  be an $R$-module in  $({\Omega^{n} G})^{\bot_\infty}$.
Assume that $N$ is an $R$-module in $^\bot(G^{\bot_\infty})$.  Thus $\Ext_R^{n+1}(N,L)=0$ by (1), and hence $ \Ext_R^{1}(\Omega^{n}N,L) $ $\cong$ $ \Ext_R^{n+1}(N,L)$ $=$ $0$. It follows that $\Omega^{n}N$ $\in$ $^\bot(({\Omega^{n} G})^{\bot_\infty})$ and then $\Omega^{n}N$ is Gorenstein projective. So $\Gpd_{R}N$ $\leq$ $n$.  This completes the proof.
\end{proof}

\vspace{2mm}
\noindent{\bf Strongly Gorenstein projective modules.}
Recall that an $R$-module $M$ is called \emph{strongly Gorenstein projective} \cite{DB} if there is an exact sequence of projective $R$-modules $$\mathbf{P}:\xymatrix{\cdots\ar[r]& P\ar[r]^{f}&  P\ar[r]^{f} & P\ar[r]^{f}&  P\ar[r]^{f}&\cdots}$$ such that  $M$ $\cong$ $\ker(f)$ and $\Hom_R(\mathbf{P},Q)$ is exact for every projective $R$-module $Q$.

All projective $R$-modules are strongly Gorenstein projective and the class of strongly Gorenstein projective modules is closed under direct sums. The principal role of the strongly
Gorenstein projective modules is to give the following characterization of Gorenstein projective
modules \cite[Theorem 2.7]{DB}: an $R$-module is Gorenstein projective if and only if it is a direct summand of a strongly Gorenstein projective $R$-module. Moreover, a careful reading
of the proof of \cite[Theorem 2.7]{DB} gives the following lemma.

\begin{lem} \label{lem: 2017082101'}If $\cdots \ra P^{-1}\xrightarrow{f^{-1}}P^{0}\xrightarrow{f^{0}}P^{1}\xrightarrow{f^{1}}  \cdots$ is an exact sequence of projective $R$-modules with all $\ker(f^i)$  Gorenstein projective, then  $\oplus\ker(f^i)$ is strongly Gorenstein projective.
\end{lem}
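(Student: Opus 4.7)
The plan is to exhibit a single projective module together with an endomorphism that produces the required two-periodic totally acyclic complex. Set $K := \bigoplus_{i\in\Z}\ker(f^i)$ and $P := \bigoplus_{i\in\Z} P^i$, which is projective. Define $f\colon P\ra P$ as the unique map whose restriction to each summand $P^i$ is $f^i\colon P^i\ra P^{i+1}\hookrightarrow P$. Since direct sums commute with kernels and images in $R$-Mod, a direct computation gives $\ker(f)=\bigoplus_i\ker(f^i)=K$ and $\im(f)=\bigoplus_i\im(f^i)$; by the exactness of the original sequence, $\im(f^i)=\ker(f^{i+1})$, so $\im(f)=\bigoplus_j\ker(f^j)=K$. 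In particular $f^2=0$ and the complex $\mathbf{P}'\colon\cdots\ra P\xrightarrow{f}P\xrightarrow{f}P\ra\cdots$ is exact; equivalently, $\mathbf{P}'$ is obtained by splicing copies of the short exact sequence $0\ra K\ra P\ra K\ra 0$, where the first map is the inclusion $\ker(f)\hookrightarrow P$ and the second is the surjection $P\twoheadrightarrow\im(f)=K$.

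It remains to show that $\Hom_R(\mathbf{P}',Q)$ is exact for every projective $R$-module $Q$. Applying $\Hom_R(-,Q)$ to $0\ra K\ra P\ra K\ra 0$ yields the short exact sequence $0\ra\Hom_R(K,Q)\ra\Hom_R(P,Q)\ra\Hom_R(K,Q)\ra 0$ provided $\Ext^1_R(K,Q)=0$. Since each $\ker(f^i)$ is Gorenstein projective, $\Ext^1_R(\ker(f^i),Q)=0$ for every projective $Q$, and because $\Ext^1$ converts direct sums in the first variable into products,
$$\Ext^1_R(K,Q)=\Ext^1_R\bigl(\bigoplus\nolimits_i\ker(f^i),Q\bigr)=\prod\nolimits_i\Ext^1_R(\ker(f^i),Q)=0.$$
Splicing the resulting short exact sequences then gives the exactness of $\Hom_R(\mathbf{P}',Q)$. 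Combined with the first step, $\mathbf{P}'$ is a totally acyclic two-periodic complex of projectives with $\ker(f)=K$, so $K=\bigoplus_i\ker(f^i)$ is strongly Gorenstein projective.

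The main (mild) obstacle is choosing the right model for $\mathbf{P}'$ and matching it to the definition of strongly Gorenstein projective: once $\mathbf{P}'$ is recognized as the splice of a single short exact sequence $0\ra K\ra P\ra K\ra 0$, the verification of $\Hom$-exactness reduces to the standard fact $\Ext^{\geq 1}_R(G,Q)=0$ for $G$ Gorenstein projective and $Q$ projective, together with the behaviour of $\Ext^1$ on direct sums. No additional input (beyond the hypothesis that every $\ker(f^i)$ is Gorenstein projective) is needed.
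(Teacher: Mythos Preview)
Your proof is correct and is precisely the argument the paper has in mind: the paper does not give its own proof but states that the lemma follows from ``a careful reading of the proof of \cite[Theorem 2.7]{DB}'', and that proof is exactly the direct-sum-and-shift construction you carried out. The only cosmetic difference is that Bennis--Mahdou verify $\Hom$-exactness of the periodic complex by appealing to total acyclicity of the original complex, whereas you bypass this and use $\Ext^1_R(\ker(f^i),Q)=0$ directly; both are equivalent under the hypothesis that every $\ker(f^i)$ is Gorenstein projective.
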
Recall that a full subcategory $\mathcal{C}$ of $R$-Mod is \emph{thick} \cite{H S app C P} if $\mathcal{C}$ is closed under direct summands and has the two out of three property: for every exact sequence of $R$-modules $0\rightarrow A\rightarrow B\rightarrow C \rightarrow 0$ with two terms in $\mathcal{C}$, then the third one is also in $\mathcal{C}$.

Following \cite{AB CM}, a complete hereditary cotorsion pair $(\mathcal{A},\mathcal{B})$ is said to be \emph{projective} if $\mathcal{A}\cap\mathcal{B}$ is the class of projective $R$-modules.

\begin{lem} \label{lem: 1.6}$($\cite[Lemma 2.1]{JwandliLiang}$)$ The following are true for any strongly Gorenstein projective $R$-module $N$:

\begin{enumerate}
\item  $N^{\bot}$ is a thick subcategory of {\rm $R$-Mod}.
\item  $(^\bot{(N^\bot)},N^\bot)$ is a projective cotorsion pair.
\end{enumerate}
\end{lem}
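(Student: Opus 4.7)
The plan rests on exploiting the periodic resolution defining a strongly Gorenstein projective module. Since $N$ sits inside an infinite periodic exact sequence of projectives $\cdots\ra P\xrightarrow{f} P\xrightarrow{f} P\ra\cdots$ with $N\cong\ker(f)=\im(f)$, three consecutive terms give a single short exact sequence $0\ra N\ra P\ra N\ra 0$ with $P$ projective. Feeding this into $\Ext_R^*(-,M)$ and using $\Ext_R^i(P,M)=0$ for $i\geq 1$, I obtain $\Ext_R^i(N,M)\cong \Ext_R^{i+1}(N,M)$ for every $i\geq 1$ and every $M$. In particular
$$N^{\bot}=N^{\bot_{\infty}},$$
and this identity is what drives both parts.

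For (1), once the above equality is in hand, thickness is a routine diagram chase. Closure of $N^{\bot}$ under direct summands is immediate from additivity of $\Ext$. For the two-out-of-three property, I apply the long exact sequence of $\Ext_R^*(N,-)$ to a short exact sequence $0\ra A\ra B\ra C\ra 0$: if $\Ext_R^{\geq 1}(N,-)$ vanishes on two of the three terms, the long exact sequence forces it to vanish on the third as well.

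For (2), I first invoke Lemma \ref{lem: 20170819001} to deduce that $(^{\bot}(N^{\bot_{\infty}}),N^{\bot_{\infty}})$, which equals $(^{\bot}(N^{\bot}),N^{\bot})$ by the identity above, is a complete hereditary cotorsion pair, so only the identification of its kernel with the class of projective $R$-modules remains. Projectives clearly lie in both classes. Conversely, given $M\in {^{\bot}(N^{\bot})}\cap N^{\bot}$, I pick an epimorphism $P'\twoheadrightarrow M$ from a projective module with kernel $K$ and aim to split $0\ra K\ra P'\ra M\ra 0$. The long exact sequence, applied with $M,P'\in N^{\bot_{\infty}}$, yields $\Ext_R^{j}(N,K)=0$ for all $j\geq 2$, and the periodicity $\Ext_R^{1}(N,K)\cong \Ext_R^{2}(N,K)$ then extends this to $j\geq 1$. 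Hence $K\in N^{\bot}$; since $M\in {^{\bot}(N^{\bot})}$, the extension splits, making $M$ a direct summand of $P'$, so projective.

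The only subtle point is the final dimension shift from $\Ext_R^{\geq 2}(N,K)=0$ down to $\Ext_R^{1}(N,K)=0$. Without the periodicity of the resolution this step would demand a separate syzygy argument, but here it comes for free from $N\cong\Omega N$. Everything else reduces to formal manipulation of long exact $\Ext$-sequences.
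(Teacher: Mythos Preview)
Your argument is correct. The key observation $N^{\bot}=N^{\bot_{\infty}}$, obtained from the periodic short exact sequence $0\ra N\ra P\ra N\ra 0$, is exactly what makes both parts work, and your dimension-shift down to $\Ext_R^1(N,K)=0$ via periodicity is valid. One minor remark: in part (2) you could have shortened the verification that $K\in N^{\bot}$ by simply invoking part (1), since $M,P'\in N^{\bot}$ and $N^{\bot}$ is thick; your explicit long-exact-sequence computation is of course fine too.

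As for comparison with the paper: the paper does not prove this lemma at all---it merely cites \cite[Lemma~2.1]{JwandliLiang}. So your write-up supplies a complete, self-contained proof where the paper defers to an external reference. Your approach (periodicity $\Rightarrow$ $N^{\bot}=N^{\bot_{\infty}}$ $\Rightarrow$ thickness and projectivity of the kernel) is the natural one and is presumably close in spirit to what appears in the cited source.
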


\vspace{2mm}
\noindent{\bf  Tilting modules.}  Let us recall some basic facts about (not necessarily finitely generated) tilting
modules. An $R$-module $T$ is \emph{tilting} \cite{LAHFUCol,Colby1995} provided that the following hold:
\begin{enumerate}
\item[$(T1)$] $\pd_RT$ $<$ $\infty$.
\item[$(T2)$] $\Ext_{R}^{i}(T,T^{(\lambda)})$ $=$ $0$ for each $i$ $\geq$ $1$ and for every cardinal $\lambda$.
\item[$(T3)$] There is a long exact sequence $0\ra R\ra T_{0}\ra \cdots\ra T_{r}\ra 0$ with $T_{i}$ $\in$ $\Add T$ for $0$ $\leq$ $i$ $\leq$ $r$, where $r$ is the projective dimension of $T$.
\end{enumerate}

%Recall that
%\begin{df}\cite{LAHFUCol,Colby1995}\label{df:2.1} {\rm An $R$-module $T$ is \emph{tilting} provided that the following hold:
%\begin{enumerate}
%\item[$(T1)$] $\Pd_{R}T$ $<$ $\infty$.
%\item[$(T2)$] $\Ext_{R}^{i}(T,T^{(\lambda)})$ $=$ $0$ for each $i$ $\geq$ $1$ and for every cardinal $\lambda$.
%\item[$(T3)$] There is a long exact sequence $0\ra R\ra T_{0}\ra \cdots\ra T_{r}\ra 0$ , where $T_{i}$ $\in$ $\Add T$, for each $0$ $\leq$ $i$ $\leq$ $r$.
%\end{enumerate}}
%\end{df}

The class $T^{\bot_{\infty}}$ is called the \emph{tilting class} induced by $T$. Further, $T$ and $T^{\bot_{\infty}}$ are
called \emph{$n$-tilting} when $T\in{\mathcal{P}_n}$. An $n$-tilting class $\mathcal{B}$ can be characterized by the
properties: $\mathcal{B}$ is closed under direct sums, $(^{\bot}\mathcal{B}, \mathcal{B})$ is a complete hereditary cotorsion pair and $^{\bot}\mathcal{B}$ $\subseteq\mathcal{P}_n$. By the proof of  \cite[Theorem 4.1]{LAHFUCol}, we have the following lemma.

\begin{lem} \label{lem: 2.5''} Let $\mathfrak{C}=(\mathcal{A}, \mathcal{B})$ be a complete hereditary cotorsion pair over a ring $R$. If $\mathcal{B}$ is an $n$-tilting class, then $\mathcal{K}_\mathfrak{C}$ $=$ $\Add T $ for a tilting $R$-module $T$.
\end{lem}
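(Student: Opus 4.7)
The plan is to follow the strategy of Angeleri-H\"ugel and Coelho. Since $\mathcal{B}$ is an $n$-tilting class, the characterization of tilting classes recalled just before the statement gives that $\mathcal{A}\subseteq\mathcal{P}_n$, $\mathcal{B}$ is closed under arbitrary direct sums, and $(\mathcal{A},\mathcal{B})$ is complete hereditary. Consequently $\mathcal{K}_{\mathfrak{C}}$ is closed under direct sums and direct summands.

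First I would build a candidate tilting module. Using completeness, take a special $\mathcal{B}$-preenvelope $0\to R\to B^0\to A^0\to 0$ with $B^0\in\mathcal{B}$ and $A^0\in\mathcal{A}$. Applying $\Hom_R(-,B')$ for $B'\in\mathcal{B}$, together with the fact that $R$ is projective and that the cotorsion pair is hereditary, yields $\Ext_R^i(B^0,B')=0$ for all $i\geq 1$, so $B^0\in {}^{\bot}\mathcal{B}=\mathcal{A}$ and hence $B^0\in\mathcal{K}_{\mathfrak{C}}$. Iterating the same construction on $A^0,A^1,\ldots$ and using $\mathcal{A}\subseteq\mathcal{P}_n$ to control when the process can be truncated produces an exact sequence
\[
0\longrightarrow R\longrightarrow T^0\longrightarrow T^1\longrightarrow\cdots\longrightarrow T^n\longrightarrow 0
\]
with each $T^i\in\mathcal{K}_{\mathfrak{C}}$. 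Put $T:=\bigoplus_{i=0}^{n}T^i$. Then $(T1)$ follows from $T\in\mathcal{A}\subseteq\mathcal{P}_n$; $(T2)$ from $T\in\mathcal{A}$, $T^{(\lambda)}\in\mathcal{B}$ (closure under sums), and heredity; and $(T3)$ is exhibited by the coresolution above since each $T^i\in\add T\subseteq\Add T$. Thus $T$ is an $n$-tilting module.

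To finish I need $\mathcal{K}_{\mathfrak{C}}=\Add T$. The inclusion $\Add T\subseteq\mathcal{K}_{\mathfrak{C}}$ is immediate. For the converse, given $M\in\mathcal{K}_{\mathfrak{C}}$, I would build a morphism $\pi\colon T^{(\Lambda)}\to M$ from a generating family of $\Hom_R(T,M)$, show surjectivity using the tilting coresolution of $R$ and $M\in\mathcal{B}=T^{\bot_\infty}$, and verify that $K:=\ker\pi$ lies in $\mathcal{B}$ by dimension shifting along the coresolution. Since $M\in\mathcal{A}$, one has $\Ext_R^1(M,K)=0$, so $0\to K\to T^{(\Lambda)}\to M\to 0$ splits and $M\in\Add T$.

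The main obstacle is the last step: verifying $K\in\mathcal{B}$ demands a careful $\Ext$-calculation combining the finite length of the coresolution with the projective dimension bound $\mathcal{A}\subseteq\mathcal{P}_n$, and is the technical heart of the argument of \cite[Theorem 4.1]{LAHFUCol}; once this is settled, the splitting and the conclusion $\mathcal{K}_{\mathfrak{C}}=\Add T$ follow formally.
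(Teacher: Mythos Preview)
Your proposal is correct and follows precisely the approach the paper intends: the paper's own ``proof'' is merely the sentence ``By the proof of \cite[Theorem 4.1]{LAHFUCol}'', and your outline reconstructs exactly that argument of Angeleri-H\"ugel and Coelho, including the construction of the finite $\mathcal{K}_{\mathfrak{C}}$-coresolution of $R$, the verification of (T1)--(T3), and the splitting argument for $\mathcal{K}_{\mathfrak{C}}\subseteq\Add T$ whose technical core you correctly identify and attribute.
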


We end this section with the following example which shows that the converse of Lemma \ref{lem: 2.5''}  is not true in general.

\begin{exa}\label{em:3.2} Let $R$ be a ring and $N$ a strongly Gorenstein projective $R$-module but not projective $($see \cite[Example 2.5]{DB}$)$. Then $\mathfrak{C}=(^\bot{(N^\bot)},N^\bot)$ is a projective cotorsion pair by Lemma \ref{lem: 1.6}. Thus $\mathcal{K}_\mathfrak{C}$ $=$ $\Add R $. We claim that $N^{\bot}$ is not a tilting class. Indeed, if $N^{\bot}$ is a tilting class, then $\pd_R N$ $<$ $\infty$ by \cite[Theorem 4.1]{LAHFUCol}. Hence $N$ is projective. This is a contradiction.
\end{exa}

\section{\bf Proof of Theorem \ref{thm: 181.100}}\label{proof}

In the following part, we will prove our main theorem. For this purpose, we need some technical results.

One can check that an $R$-module $M$ is Gorenstein projective if and only if there is an exact
sequence $0\ra M\ra Q^0\xrightarrow{g^0} \cdots \ra Q^n\xrightarrow{g^n}\cdots$ such that $Q^j$ is projective and $\Ext^i_R(\ker(g^j),L)$ $=$ $0$ for every
 projective $R$-module $L$, $j$ $\geq$ $0$ and $i$ $\geq$ $1$. Moreover, we have the following lemma.

\begin{lem} \label{lem: 1.8} Let $G$ be an $R$-module and $n$ a nonnegative integer. The following are equivalent:
\begin{enumerate}
\item $\Gpd_{R}G$ $\leq$ $n$.
\item There is an exact sequence of $R$-modules $$\xymatrix{0\ar[r]&G\ar[r]&P^{0}\ar[r]^{f^{0}}&\cdots\ar[r]&P^{m}\ar[r]^{f^{m}}&\cdots}$$ with $\pd_{R}P^{j}$ $\leq$ $n$ such that $\Ext_{R}^{i}(\ker(f^{j}),P)$ $=$ $0$ for every projective $R$-module $P$, $j$ $\geq$ $0$ and $i$ $\geq$ $n+1$.

\end{enumerate}
\end{lem}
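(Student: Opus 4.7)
The plan is to prove both directions via constructions with complete projective resolutions, exploiting the equivalence $\Gpd_R G\leq n\iff \Omega^n G$ is Gorenstein projective and the classical case $n=0$ (the right complete resolution characterization of Gorenstein projectives recalled just before the statement).

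For (1) $\Rightarrow$ (2), the idea is to build $P^0$ via a pushout and then splice. Assuming $n\geq 1$ (the case $n=0$ is classical), the Holm-type approximation for modules of finite Gorenstein projective dimension supplies a short exact sequence $0\to K\to C\to G\to 0$ with $C$ Gorenstein projective and $\pd_R K\leq n-1$. Since $C$ is Gorenstein projective, the first step of its right complete projective resolution gives $0\to C\to Q^0\to C'\to 0$ with $Q^0$ projective and $C'$ Gorenstein projective. The pushout of $C\twoheadrightarrow G$ along $C\hookrightarrow Q^0$ produces a module $P^0$ sitting in both $0\to G\to P^0\to C'\to 0$ and $0\to K\to Q^0\to P^0\to 0$; the latter forces $\pd_R P^0\leq n$. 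Splicing $0\to G\to P^0\to C'\to 0$ with a right complete resolution $0\to C'\to Q^1\to Q^2\to\cdots$ of $C'$ produces the desired sequence $0\to G\to P^0\to Q^1\to Q^2\to\cdots$. The Ext conditions then follow: $\ker(f^0)=G$ satisfies $\Ext_R^i(G,P)=0$ for $i\geq n+1$ because $\Gpd_R G\leq n$, while for $j\geq 1$ each $\ker(f^j)$ is Gorenstein projective by construction, so $\Ext_R^i(\ker(f^j),P)=0$ for all $i\geq 1$.

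For (2) $\Rightarrow$ (1), the idea is to show $\Omega^n G$ is Gorenstein projective by exhibiting a totally acyclic projective complex with $\Omega^n G$ as a cycle. Decompose the given sequence into short exact pieces $0\to K^j\to P^j\to K^{j+1}\to 0$, where $K^0=G$ and $K^j=\ker(f^j)$, fix projective resolutions of each $K^j$, and apply the horseshoe lemma coherently to obtain compatible projective resolutions of the $P^j$. Since $\pd_R P^j\leq n$, the generalized Schanuel lemma forces $Q^j:=\Omega^n P^j$ to be projective; taking $n$-th syzygies of each short exact sequence gives $0\to\Omega^n K^j\to Q^j\to\Omega^n K^{j+1}\to 0$, which splice to $0\to\Omega^n G\to Q^0\to Q^1\to\cdots$. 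Concatenating with the tail of a projective resolution of $\Omega^n G$ produces an exact complex of projectives with $\Omega^n G=\ker(Q^0\to Q^1)$. The hypothesis $\Ext_R^i(K^j,P)=0$ for $i\geq n+1$ translates, via dimension shift by $n$, into $\Ext_R^1(\Omega^n K^j,P)=0$ for all $j\geq 0$ and $\Ext_R^i(G,P)=0$ for $i\geq n+1$; these together yield $\Hom_R(-,P)$-exactness of the spliced complex at every position. Hence $\Omega^n G$ is Gorenstein projective and $\Gpd_R G\leq n$.

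The main obstacle is in (1) $\Rightarrow$ (2): producing $P^0$ with $\pd_R P^0\leq n$ \emph{and} cokernel $C'$ Gorenstein projective requires first separating the Gorenstein projective part of $G$ from its finite-projective-dimension part via the approximation $0\to K\to C\to G\to 0$, and only then pushing out against the right complete resolution of $C$; a direct pushout against a right complete resolution of $\Omega^n G$ typically produces a module of infinite projective dimension instead. For (2) $\Rightarrow$ (1), the bookkeeping challenge is choosing the horseshoe resolutions so consistently that the syzygies splice into a genuine exact complex rather than merely a sequence with projective terms.
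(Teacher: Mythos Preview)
Your proof is correct and follows essentially the same approach as the paper's. For $(1)\Rightarrow(2)$ the paper directly cites \cite[Lemma~2.17]{LWchris} to obtain an embedding $0\to G\to D\to G^{0}\to 0$ with $\pd_R D\leq n$ and $G^{0}$ Gorenstein projective (your pushout from the Holm approximation $0\to K\to C\to G\to 0$ reconstructs exactly this sequence, with your $P^{0}$ playing the role of $D$ and your $C'$ that of $G^{0}$), and for $(2)\Rightarrow(1)$ the paper's horseshoe-and-syzygy argument is identical to yours, except that it stops after building the right half $0\to\Omega^{n}G\to\Omega^{n}P^{0}\to\cdots$ and invokes the observation recorded just before the lemma that this right half alone already forces $\Omega^{n}G$ to be Gorenstein projective.
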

\begin{proof}
$(1)$ $\Rightarrow$ $(2)$. By \cite[Lemma 2.17]{LWchris}, there is an exact sequence $0\ra G\ra D\xrightarrow{\alpha} G^{0}\ra 0$ of $R$-modules with $G^{0}$ Gorenstein projective and $\pd_{R}D$ $\leq$ $n$. Thus there is an exact sequence of $R$-modules $$\xymatrix{0\ar[r]&G^{0}\ar[r]^{\beta}&P^{1}\ar[r]^{f^{1}}&\cdots\ar[r]&P^{m}\ar[r]^{f^{m}}&\cdots}$$ with each $P^{j}$ projective such that $\Ext_{R}^{i}(\ker(f^{j}),P)$ $=$ $0$ for every projective $R$-module $P$, $j$ $\geq$ $1$ and $i$ $\geq$ $1$. So we have the exact sequence of $R$-modules
$$\xymatrix{0\ar[r]&G\ar[r]&P^{0}\ar[r]^{f^{0}}&P^{1}\ar[r]^{f^{1}}& \cdots \ar[r]&P^{m}\ar[r]^{f^{m}}&\cdots}$$ with $f^{0}=\beta\alpha$ and $P^0$ $=$ $D$. By
\cite[Theorem 2.20]{H}, $\Ext_{R}^{i}(G,P)$ $=$ $0$ for every projective $R$-module $P$ and $i$ $\geq$ $n+1$, as desired.

$(2)$ $\Rightarrow$ $(1)$. To prove that $\Gpd_{R}G$ $\leq$ $n$, it is sufficient to prove that $\Omega^nG$ is Gorenstein projective. Consider the exact sequence $0\ra \ker(f^{m}) \ra P^{m}\ra \ker(f^{m+1})\ra 0$ for all $m$ $\geq$ $0$,
we can construct the following commutative diagram as in \cite[Lemma 8.2.1]{EJ}
$$\xymatrix{&0&0&0&             \\
0\ar[r]&\ker(f^{m})\ar[r]\ar[u]&P^{m}\ar[r]\ar[u]&\ker(f^{m+1})\ar[r]\ar[u]& 0\\
           0\ar[r]&Q^{m,0}\ar[r]\ar[u]&Q^{m,0}\oplus Q^{m+1,0}\ar[r]\ar[u]&Q^{m+1,0}\ar[r]\ar[u]& 0 \\
&\vdots\ar[u]&\vdots\ar[u]&\vdots\ar[u]&   \\
0\ar[r]&\Omega^{n}\ker(f^{m})\ar[r]\ar[u]&\Omega^{n}P^{m}\ar[r]\ar[u]&\Omega^{n}\ker(f^{m+1})\ar[r]\ar[u]& 0 \\
&0\ar[u]&0\ar[u]&0,\ar[u]& }$$  where rows and columns are exact and each $Q^{t,k}$ is projective.  Since $\pd_{R}P^{m}$ $\leq$ $n$, $\Omega^{n}P^{m}$ is projective for all $m$ $\geq$ $0$. Note that $\Omega^{n}G$ $=$ $\Omega^{n}\ker(f^{0})$. Thus we have the following commutative diagram with exact row
$$\xymatrix@C=10pt@R=13pt{0\ar[r]&\Omega^{n}G\ar[r]&\Omega^{n}P^{0}\ar[r]&\cdots\ar[r]
&\Omega^{n}P^{m-1}\ar[rd]\ar[rr]&&\Omega^{n}P^{m}\ar[r]&\cdots\\
                               &&&&&\Omega^{n}\ker(f^{m})\ar[ru]\ar[rd]&&
\\&&&&0\ar[ru]&&0.&}$$ By assumption, one can check that $\Ext_{R}^{j}(\Omega^{n}\ker(f^{m}),P)$ $\cong$ $\Ext_{R}^{n+j}(\ker(f^{m}),P)$ $=$ $0$ for every projective $R$-module $P$, $j$ $\geq$ $1$ and $m$ $\geq$ $0$. Hence $\Omega^{n}G$ is Gorenstein projective, as desired.
This completes the proof.
\end{proof}

\begin{lem} \label{lem: 1803261.1} Let $R$ be a ring and $\mathfrak{C}=(\mathcal{A},\mathcal{B})$ a complete hereditary cotorsion pair of $R$-modules. If $\mathcal{K}_\mathfrak{C}\subseteq {\mathcal{P}_n}$, $\mathcal{A}$ $\subseteq$ ${\mathcal{GP}_n}$ and $G$ is an $R$-module in $\mathcal{A}$, then there is a strongly Gorenstein projective $R$-module $N$ in $\mathcal{A}$ such that $\Omega^n{G}$ is a direct summand of $N$.
\end{lem}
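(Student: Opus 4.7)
The strategy is to build a two-sided exact complex of projective modules in which $\Omega^n G$ appears as one of the kernels and \emph{every} kernel lies in $\mathcal{A}$; Lemma \ref{lem: 2017082101'} applied to the direct sum of these kernels then produces the desired strongly Gorenstein projective module $N$, and the fact that $\mathcal{A}$ is closed under direct sums will give $N \in \mathcal{A}$.

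To construct the complex, I would first iterate the completeness of $(\mathcal{A},\mathcal{B})$ starting from $G_0 := G$. At each step, completeness furnishes a short exact sequence $0 \to G_k \to B_k \to G_{k+1} \to 0$ with $B_k \in \mathcal{B}$ and $G_{k+1} \in \mathcal{A}$. Since $\mathcal{A} = {}^{\bot}\mathcal{B}$ is closed under extensions, both $G_k, G_{k+1} \in \mathcal{A}$ force $B_k \in \mathcal{A} \cap \mathcal{B} = \mathcal{K}_\mathfrak{C} \subseteq \mathcal{P}_n$. Splicing produces an exact sequence
\[
0 \to G \to B_0 \to B_1 \to B_2 \to \cdots
\]
with $\pd_R B_k \leq n$ for all $k$ and with intermediate kernels $G_k \in \mathcal{A} \subseteq \mathcal{GP}_n$. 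Now imitating the diagram in the proof of $(2)\Rightarrow(1)$ of Lemma \ref{lem: 1.8}, I would pass to $n$-th syzygies: each $\Omega^n B_k$ becomes projective (as $\pd_R B_k \leq n$), and iterated use of the Horseshoe construction gives an exact sequence
\[
0 \to \Omega^n G \to \Omega^n B_0 \to \Omega^n B_1 \to \cdots
\]
of projectives whose kernels $\Omega^n G_k$ are Gorenstein projective (since $G_k \in \mathcal{GP}_n$). Splicing this with a projective resolution $\cdots \to Q^{-1} \to Q^0 \to \Omega^n G \to 0$ yields a two-sided exact complex of projectives whose kernels are the iterated syzygies $\Omega^{n+j}G$ ($j \geq 0$) together with the modules $\Omega^n G_k$ ($k \geq 1$).

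Every one of these kernels is Gorenstein projective and, crucially, lies in $\mathcal{A}$: the syzygies $\Omega^{n+j}G$ and $\Omega^n G_k$ belong to $\mathcal{A}$ because $G$ and each $G_k$ do and $\mathcal{A}$ is closed under syzygies (by the hereditary assumption together with projectives lying in $\mathcal{A}$). Lemma \ref{lem: 2017082101'} therefore says the direct sum $N$ of these kernels is strongly Gorenstein projective; $N \in \mathcal{A}$ since $\mathcal{A}$ is closed under direct sums; and $\Omega^n G$ is one of the summands of $N$, completing the argument. The main obstacle I anticipate is ensuring the positive-side cosyzygies stay inside $\mathcal{A}$, which a generic complete projective resolution of $\Omega^n G$ need not do; the iterative $(\mathcal{A},\mathcal{B})$-approximation in the first step circumvents this by replacing projective modules in the coresolution with modules in $\mathcal{K}_\mathfrak{C} \subseteq \mathcal{P}_n$, at the cost of having to take $n$-th syzygies to recover projectivity.
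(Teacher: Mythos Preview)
Your proposal is correct and follows essentially the same route as the paper's proof: build a $\mathcal{K}_\mathfrak{C}$-coresolution of $G$ via iterated special $\mathcal{B}$-preenvelopes, pass to $n$-th syzygies using the Horseshoe-type diagram from Lemma~\ref{lem: 1.8} to obtain a projective coresolution of $\Omega^n G$ with Gorenstein projective kernels lying in $\mathcal{A}$, splice with a projective resolution of $\Omega^n G$ on the left, and apply Lemma~\ref{lem: 2017082101'} to the direct sum of all kernels. The only cosmetic difference is that you justify $B_k \in \mathcal{K}_\mathfrak{C}$ via extension-closure of $\mathcal{A}$, whereas the paper states this directly; and you make explicit that $N \in \mathcal{A}$ because $\mathcal{A} = {}^{\bot}\mathcal{B}$ is closed under arbitrary direct sums, which the paper leaves as ``clear''.
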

\begin{proof} Let $G$ be an $R$-module in $\mathcal{A}$. Note that $(\mathcal{A},\mathcal{B})$ is a complete cotorsion pair. Then there is an exact sequence of $R$-modules $$\xymatrix{0\ar[r]&G\ar[r]&P^{0}\ar[r]^{f^{0}}&\cdots\ar[r]&P^{m}\ar[r]^{f^{m}}&\cdots}$$ with $P^{j}$ $\in$ $\mathcal{K}_\mathfrak{C}$ and $\ker(f^{j})$ $\in$ $\mathcal{A}$ for $j\geq0$. By assumption, $\mathcal{A}$ $\subseteq $ ${\mathcal{GP}_n}$. Thus $\Gpd_R\ker(f^j) $ $\leq$ $n$ for $j$ $\geq$ $0$. It follows that $\Ext_{R}^{n+i}(\ker(f^{j}),P)$ $=$ $0$ for any projective $R$-module $P$, $j$ $\geq$ $0$ and  $i$ $\geq$ $1$ by \cite[Theorem 2.20]{H}. Using similar arguments as in the proof of $(2)$ $\Rightarrow$ $(1)$ in Lemma \ref{lem: 1.8}, we get an exact sequence of $R$-modules

 $$\xymatrix@C=10pt@R=13pt{0\ar[r]&\Omega^{n}G\ar[r]&\Omega^{n}P^{0}\ar[r]^{\indent g^0}&\cdots\ar[r]
&\Omega^{n}P^{m-1}\ar[rd]\ar[rr]^{g^{m-1}}&&\Omega^{n}P^{m}\ar[r]^{\indent g^m}&\cdots\\
                               &&&&&\Omega^{n}\ker(f^{m})\ar[ru]\ar[rd]&&
\\&&&&0\ar[ru]&&0&}$$ where  $\Omega^{n}P^{j}$ is projective and  $\ker(g^j)$ $=$ $\Omega^n\ker(f^j)$ is Gorenstein  projective for  $j$ $\geq$ $0$. Since each $\ker{(f^j)}$ $\in$ $\mathcal{A}$ and $(\mathcal{A},\mathcal{B})$ is a hereditary cotorsion pair, $\Omega^n\ker{(f^j)}$ $\in$ $\mathcal{A}$ for $j$ $\geq$ $0$.
 So we obtain an exact sequence of $R$-modules $$\mathbf{P_+}:0\ra \Omega^{n}G \ra \Omega^{n}P^{0}\xrightarrow{g^0} \cdots \ra \Omega^{n}P^{m}\xrightarrow{g^m} \cdots,$$ where  $\ker(g^j)$ is  Gorenstein projective and belonging to $\mathcal{A}$ for $j$ $\geq$ $0$.  On the other hand, we notice that $\Omega^n G$ is  Gorenstein projective and belonging to $\mathcal{A}$, thus we have an exact sequence of $R$-modules $$\mathbf{P_{-}}:\cdots\ra P^{-m}\xrightarrow{g^{-m}} \cdots\ra P^{-1}\xrightarrow{g^{-1}} \Omega^{n}G \ra 0,$$ where  $P^{j}$ is projective and $\ker(g^j)$ is a Gorenstein projective $R$-module in $\mathcal{A}$ for $j$ $\leq$ $-1$.

Gluing the two sequences $\mathbf{P}_{-}$ and $\mathbf{P}_{+}$ above, we obtain an exact sequence of projective $R$-modules $$\cdots \ra P^{-m}\xrightarrow{g^{-m}}\cdots \ra P^{-1}\xrightarrow{g^{-1}} \Omega^{n}P^{0}\xrightarrow{g^{0}}\cdots \ra \Omega^{n}P^{m}\xrightarrow{g^{m}} \cdots$$ such that  each  $\ker(g^j)$ is a Gorenstein projective $R$-module in $\mathcal{A}$.  Let $N$ $=$ $\oplus\ker(g^j)$.  Then $N$ is strongly Gorenstein projective by Lemma \ref{lem: 2017082101'}. It is clear that $N$ is in $\mathcal{A}$ and $\Omega^{n}G$ is a direct summand of $N$. This completes the proof.
\end{proof}

The following is a key result to prove Theorem \ref{thm: 181.100}.
\begin{prop} \label{lem: 1.9} Let $G$ be an $R$-module and $n$ a nonnegative integer. Then the following are equivalent:
\begin{enumerate}
\item There is a strongly Gorenstein projective $R$-module $N$ in $^\bot(G^{\bot_\infty})$ such that $\Omega^{n} G$ is a direct summand of $N$.
\item $^\bot(G^{\bot_\infty})$ $\subseteq $ ${\mathcal{GP}_n}$ and $\mathcal{K}_G$ $\subseteq $ ${\mathcal{P}_n}$.
\end{enumerate}
\end{prop}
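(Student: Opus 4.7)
The plan is to treat the two directions separately, with most of the work in $(1)\Rightarrow(2)$.

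Direction $(2)\Rightarrow(1)$ is a direct application of Lemma \ref{lem: 1803261.1} to the complete hereditary cotorsion pair $\mathfrak{C}=({}^\bot(G^{\bot_\infty}),G^{\bot_\infty})$ provided by Lemma \ref{lem: 20170819001}. The hypotheses translate into $\mathcal{A}\subseteq\mathcal{GP}_n$ and $\mathcal{K}_\mathfrak{C}\subseteq\mathcal{P}_n$, and $G$ trivially belongs to $\mathcal{A}={}^\bot(G^{\bot_\infty})$, so Lemma \ref{lem: 1803261.1} delivers the desired strongly Gorenstein projective module $N$ with $\Omega^n G$ as a direct summand.

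For $(1)\Rightarrow(2)$, the inclusion ${}^\bot(G^{\bot_\infty})\subseteq\mathcal{GP}_n$ is immediate: since $\Omega^n G$ is a direct summand of the strongly Gorenstein projective module $N$, it is itself Gorenstein projective, so $G\in\mathcal{GP}_n$, and Lemma \ref{lem: 1.09}(3) gives the inclusion. The substantive part is $\mathcal{K}_G\subseteq\mathcal{P}_n$. Fix $M\in\mathcal{K}_G$; the first inclusion already yields $\Gpd_RM\leq n$, so any $n$-th syzygy $\Omega^n M$ is Gorenstein projective. The plan is to force $\Omega^n M$ to be projective by invoking the projective cotorsion pair $({}^\bot(N^\bot),N^\bot)$ of Lemma \ref{lem: 1.6}(2), whose kernel is exactly the class of projective modules; this reduces the problem to verifying $\Omega^n M\in N^\bot\cap{}^\bot(N^\bot)$.

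For $\Omega^n M\in N^\bot$: the periodic presentation $0\to N\to Q\to N\to 0$ of the strongly Gorenstein projective $N$ gives $N^\bot=N^{\bot_\infty}$ via the long exact sequence of $\Ext$. From $N\in{}^\bot(G^{\bot_\infty})$, $M\in G^{\bot_\infty}$, and hereditariness one has $\Ext_R^i(N,M)=0$ for all $i\geq 1$; dimension shifting along a projective resolution of $M$ (using that the Gorenstein projective module $N$ satisfies $\Ext_R^i(N,Q_k)=0$ for every projective $Q_k$) propagates this to $\Ext_R^i(N,\Omega^n M)=0$ for all $i\geq 1$. For $\Omega^n M\in{}^\bot(N^\bot)$: given any $L\in N^\bot$, the summand relation $\Omega^n G\mid N$ forces $\Ext_R^i(\Omega^n G,L)=0$ for $i\geq 1$, equivalently $\Ext_R^j(G,L)=0$ for $j\geq n+1$; Lemma \ref{lem: 1.09}(1) applied to $M\in{}^\bot(G^{\bot_\infty})$ then yields $\Ext_R^{n+1}(M,L)=0$, i.e. $\Ext_R^1(\Omega^n M,L)=0$. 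Combining these two memberships, Lemma \ref{lem: 1.6}(2) forces $\Omega^n M$ to be projective, whence $\pd_R M\leq n$.

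The main obstacle is that $L\in N^\bot$ only forces vanishing of $\Ext_R^{\geq n+1}(G,L)$, not $L\in G^{\bot_\infty}$, so one cannot directly use $M\in{}^\bot(G^{\bot_\infty})$ to kill $\Ext_R^1(M,L)$. Lemma \ref{lem: 1.09}(1) is the tool precisely calibrated to this shifted vanishing range and provides the bridge between the strongly Gorenstein projective $N$ (which only sees $\Omega^n G$) and the global projective cotorsion pair argument.
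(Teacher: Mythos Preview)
Your proof is correct and follows essentially the same approach as the paper's. The only cosmetic difference is in showing $\Omega^n M\in N^\bot$: the paper invokes thickness of $N^\bot$ from Lemma~\ref{lem: 1.6}(1) directly, while you unwind this via dimension shifting plus the periodicity $N^\bot=N^{\bot_\infty}$; these are the same argument in slightly different packaging.
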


\begin{proof} $(1)$ $\Rightarrow$ $(2)$. By (1), there is a strongly Gorenstein projective $R$-module $N$ in $^\bot(G^{\bot_\infty})$ such that $\Omega^{n} G$ is a direct summand of $N$. Then $\Omega^{n} G$ is Gorenstein projective. Thus $\Gpd_{R} G$ $\leq$ $n$, and so $^\bot(G^{\bot_\infty})$ $\subseteq $ ${\mathcal{GP}_n}$ by Lemma \ref{lem: 1.09}(3).

 For any $H$ $\in$ $\mathcal{K}_G$, to prove that $H\in{\mathcal{P}_n}$, by  Lemma \ref{lem: 1.6}(2), we only need to show that $\Omega^{n}H$ $\in$ $^\bot(N^\bot)$ $\cap$ $N^\bot$. It is clear that $H$ $\in$ $N^\bot$ since $N$ $\in$ $^\bot{(G^{\bot_{\infty}})}$ and  $H$ $\in$ $G^{\bot_{\infty}}$. Note that $N^\bot$ is thick by Lemma \ref{lem: 1.6}(1) and every projective $R$-module is in $N^\bot$. Thus $\Omega^{n}H$ $\in$ $N^\bot$. Next we will prove that $\Omega^{n}H$ is in $^\bot(N^\bot)$. Let $K$ be an $R$-module in $N^\bot$. Since $N$ is strongly Gorenstein projective, there is an exact sequence $0\ra N\ra P\ra N\ra 0$ with $P$ projective. It follows that $N^{\bot}=N^{\bot_\infty}$, and so $K$ $\in$ $N^{\bot_\infty}$.  Since $\Omega^{n} G$ is a direct summand of $N$, $\Ext^{i}_{R}(\Omega^{n}G,K)$ $=$ $0$ for $i$ $\geq$ $1$. It follows that $\Ext^{n+i}_{R}(G,K)$ $=$ $0$ for $i$ $\geq$ $1$. By Lemma \ref{lem: 1.09}(1), $\Ext^{n+1}_{R}(H,K)$ $=$ $0$. Thus $\Ext^{1}_{R}(\Omega^{n}H,K)$ $=$ $0$, and hence $\Omega^{n}H$ $\in$ $^\bot(N^\bot)$. So $\Omega^{n}H$ $\in$ $^\bot(N^\bot)$ $\cap$ $N^\bot$, as desired.

$(2)$ $\Rightarrow$ $(1)$. Note that $(^\bot{(G^{\bot_{\infty}})},G^{\bot_{\infty}})$ is a complete  hereditary cotorsion pair by Lemma \ref{lem: 20170819001}. The proof follows from Lemma \ref{lem: 1803261.1}.
\end{proof}

\noindent\begin{rem} \label{rem: 1.0009} \rm{
 Assume that $M$ is an $R$-module of finite projective dimension at most $n$ and  $N$ is a strongly Gorenstein projective $R$-module. Note that $\Omega^{n}(M\oplus N)$ can be taken to be $\Omega^{n}M \oplus N$. Since $\Omega^{n}M$ is projective, it is clear that $\Omega^{n}M \oplus N$  is a strongly Gorenstein projective $R$-module in $^\bot(({M\oplus N})^{\bot_\infty})$. Hence $M\oplus N$ satisfies the condition (1) of Proposition \ref{lem: 1.9}.}
\end{rem}

The following
well-known lemma will play a useful role in our investigations.

\begin{lem}{\rm (Dimension shifting)} \label{lem: 1.03} Let $R$ be a ring, n a positive integer, and $M$ an $R$-module. Fix an exact sequence of $R$-modules $0\ra K\ra T_{n}\ra\cdots\ra T_{1}\ra L\ra 0$. The following are true:
\begin{enumerate}

\item[(1)]  If $\Ext^i_R(M, T_s)$ $=$ $0$ for $1$ $\leq$ $s$ $\leq$ $n$ and $i$  $\geq$ $1$, then $\Ext_{R}^{n+j}(M,K)$ $\cong$ $\Ext_{R}^{j}(M,L)$ for all $j$ $\geq$ $1$.

\item[(2)] If $\Ext^i_R(T_s, M)$ $=$ $0$ for $1$ $\leq$ $s$ $\leq$ $n$ and $i$  $\geq$ $1$, then $\Ext_{R}^{n+j}(L,M)$ $\cong$ $\Ext_{R}^{j}(K,M)$ for all $j$ $\geq$ $1$.

\end{enumerate}
\end{lem}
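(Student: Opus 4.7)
The plan is to prove both statements by induction on the length $n$ of the sequence, decomposing the given $(n+2)$-term exact sequence into a short exact sequence together with a sequence of length $n-1$ at each step. Parts (1) and (2) are formally parallel: one applies the long exact sequence of the covariant functor $\Ext_R(M,-)$ in the first case and of the contravariant functor $\Ext_R(-,M)$ in the second, and the vanishing hypotheses turn the relevant connecting homomorphisms into isomorphisms. I will describe the argument for (1); the argument for (2) is obtained by dualizing.

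For the base case $n=1$, the sequence is $0 \ra K \ra T_1 \ra L \ra 0$ with $\Ext^i_R(M,T_1)=0$ for all $i\geq 1$. The long exact sequence of $\Ext_R(M,-)$ contains, for each $j\geq 1$, the fragment
\[
\Ext^{j}_R(M,T_1) \ra \Ext^{j}_R(M,L) \ra \Ext^{j+1}_R(M,K) \ra \Ext^{j+1}_R(M,T_1),
\]
whose outer terms vanish, giving $\Ext^{j+1}_R(M,K) \cong \Ext^{j}_R(M,L)$, which is exactly the $n=1$ instance. For the inductive step, assume the statement for sequences of length $n-1$ and suppose the given sequence has length $n$. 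Let $K' = \ker(T_1 \ra L) = \im(T_2 \ra T_1)$. Then the original sequence splits into two exact sequences
\[
0 \ra K' \ra T_1 \ra L \ra 0, \qquad 0 \ra K \ra T_n \ra \cdots \ra T_2 \ra K' \ra 0,
\]
the second of which has length $n-1$ and inherits the vanishing hypothesis on $T_2,\ldots,T_n$. The base case applied to the first sequence yields $\Ext^{j+1}_R(M,K') \cong \Ext^{j}_R(M,L)$ for $j\geq 1$, while the induction hypothesis applied to the second yields $\Ext^{(n-1)+i}_R(M,K) \cong \Ext^{i}_R(M,K')$ for $i\geq 1$. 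Taking $i = j+1$ in the latter and composing with the former produces $\Ext^{n+j}_R(M,K) \cong \Ext^{j+1}_R(M,K') \cong \Ext^{j}_R(M,L)$, completing the induction.

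Part (2) is proved identically by applying $\Ext_R(-,M)$ to the same decomposition; the short exact sequence $0 \ra K' \ra T_1 \ra L \ra 0$ produces the connecting isomorphism $\Ext^{j+1}_R(L,M) \cong \Ext^{j}_R(K',M)$, and the shorter sequence handled by the inductive hypothesis supplies $\Ext^{(n-1)+i}_R(K',M) \cong \Ext^{i}_R(K,M)$. There is no genuine obstacle to the argument; the only subtlety is accounting correctly for the index shift so that the two isomorphisms compose to shift the cohomological degree by exactly $n$ rather than by $n-1$ or $n+1$, which is ensured by taking the splicing module $K'$ in the correct place and applying the inductive hypothesis to the sequence of length $n-1$ rather than $n$.
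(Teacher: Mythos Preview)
Your proof is correct and is the standard induction-on-length argument for dimension shifting. The paper does not supply a proof of this lemma at all; it introduces the statement as a ``well-known lemma'' and moves directly on to the proof of Theorem~\ref{thm: 181.100}. Your write-up fills in exactly the expected details, and the index bookkeeping is handled correctly.
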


Now we can give the proof of Theorem \ref{thm: 181.100}.
\begin{para}\label{3.15}{\bf{Proof of Theorem \ref{thm: 181.100}}}.
{$(1)$ $\Rightarrow$ $(2)$. It is only need to prove that $\mathcal{A}\subseteq {\mathcal{GP}_n}$. Let $M$ be an $R$-module in $\mathcal{A}$. Since $(\mathcal{A},\mathcal{B})$ is a complete cotorsion pair, there is an exact sequence of $R$-modules $$\xymatrix{0\ar[r]&M\ar[r]&X^{0}\ar[r]^{f^{0}}&\cdots\ar[r]&X^{m}\ar[r]^{f^{m}}&\cdots}$$ such that each $X^j$ $\in$ $\mathcal{K}_\mathfrak{C}$ and each $\ker(f^i)$ $\in$ $\mathcal{A}$. Note that there is an exact sequence $0\ra R\ra T_{0}\ra \cdots \ra T_{n}\ra 0 $ of $R$-modules with each $T_{i}$ $\in$ $\Add T$. Thus, for every free $R$-module $R^{(I)}$, we have the following exact sequence of $R$-modules
$$0\ra R^{(I)}\ra  T_{0}^{(I)}\ra \cdots \ra T_{n}^{(I)}\ra 0,$$
where each  $T_{i}^{(I)}$ $\in$ $\mathcal{B}$ since $\Add T$ $=$ $\mathcal{K}_\mathfrak{C}$ $\subseteq$ $\mathcal{B}$. It follows from Lemma \ref{lem: 1.03}(1) that $\Ext_{R}^{n+i}(A,R^{(I)})$ $\cong$ $\Ext_{R}^{i}(A, T_{n}^{(I)})$ $=$ $0$ for all $A$ $\in$ $\mathcal{A}$ and $i$ $\geq$ $1$. Hence $\Ext_{R}^{n+i}(A, P)$ $=$ $0$ for every projective $R$-module $P$ and $i$ $\geq$ $1$.  Therefore $\Ext_{R}^{n+i}(\ker(f^j), P)$ $=$ $0$ for every projective $R$-module $P$, $j$ $\geq$ $0$ and $i$ $\geq$ $1$. It is clear that $\pd_{R}X^{j}$ $\leq$ $n$ since $X^j$ $\in$ $\mathcal{K}_\mathfrak{C}$ $=$ $\Add T$. Thus $\Gpd_R M$ $\leq$ $n$ by Lemma \ref{lem: 1.8}.

$(2)$ $\Rightarrow$ $(3)$. Since $(\mathcal{A},\mathcal{B})$ is a complete cotorsion pair, there is an exact sequence $0\ra R \ra T_{0}\ra \cdots \ra T_{n-1}\ra A\ra 0$ of $R$-modules with $A$ $\in$ $\mathcal{A}$ and $T_{j}$ $\in$ $\mathcal{K}_\mathfrak{C}$ for $0\leq j\leq n-1$. Let $T$ $=$ $A$ $\oplus$ $\oplus^{n-1}_{0}T_{i}$. We will check that $T$ is an $n$-tilting $R$-module. Note that each $T_j$ $\in$ $\mathcal{B}$ and $\mathcal{A}\subseteq {\mathcal{GP}_n}$, $\Ext_{R}^{i}(M,A)$ $\cong$ $\Ext_{R}^{n+i}(M,R)$ $=$ $0$ for all $M$ $\in$ $\mathcal{A}$ and $i$ $\geq$ $1$ by Lemma \ref{lem: 1.03}(1).  It follows that $A$ $\in$ $\mathcal{B}$, and then $A$ $\in$ $\mathcal{K}_\mathfrak{C}$. Therefore $T$ $\in$ $\mathcal{K}_\mathfrak{C}$, and hence $T$ $\in$ $\mathcal{P}_{n}$ since $\mathcal{K}_\mathfrak{C}$ $\subseteq $ ${\mathcal{P}_n}$ by assumption. It is easy to check that $\Ext_{R}^{i}(T,T^{(\lambda)})$ $=$ $0$ for all $i$ $\geq$ $1$ and each cardinal $\lambda$ by noting that $\mathcal{K}_\mathfrak{C}$ is closed under direct sums. So $T$ is an $n$-tilting $R$-module and Add$T$ $\subseteq$ $\mathcal{K}_\mathfrak{C}$.

Applying Lemma \ref{lem: 1803261.1}, for each $L$ in $\mathcal{A}$, we obtain a strongly Gorenstein projective $R$-module ${N_L}$ in $\mathcal{A}$ such that $\Omega^nL$ is a direct summand of ${N_L}$. Let $\mathcal{X}$ be the class $\{$${N_L}$ $|$ $L$ $\in$ $\mathcal{A}$$\}$. Then $\{T\}$ $\cup$ $\mathcal{X}$ $\subseteq$ $\mathcal{A}$. Next we check that $\mathcal{B}$ $=$ $T^{\bot_\infty}$ $\cap$ $\mathcal{X}^{\bot_\infty}$. It is clear that $\mathcal{B}$ $\subseteq$ $ T^{\bot_\infty}$ $\cap$ $\mathcal{X}^{\bot_\infty}$. For the reverse containment, assume that $L$ is an $R$-module in $\mathcal{A}$ and $H$ is an $R$-module in $T^{\bot_\infty}$ $\cap$ ${{N_L}}^{\bot_\infty}$. By \cite[Theorem 3.11]{B}, there is an exact sequence of $R$-modules
 $$\xymatrix{0\ar[r] &K_{n-1}\ar[r]&T'_{n-1}\ar[r]&\cdots \ar[r]&T'_{1}\ar[r]&T'_{0}\ar[r]&H\ar[r]&0}$$ with $T'_{j}$ $\in$ Add$T$. It is clear that $\pd_R T'_{j}$ $\leq$ $n$ for $0$ $\leq$ $j$ $\leq$ $n-1$. Since ${N_L}$ is strongly Gorenstein projective, $T'_j$ $\in$ ${N_L}^{\bot_\infty}$  for  $0$ $\leq$ $j$ $\leq$ $n-1$.
 Note that $H$ $\in$ ${N_L}^{\bot_\infty}$ and ${N_L}^\bot$  $=$ ${N_L}^{\bot_\infty}$. Applying Lemma \ref{lem: 1.6}(1), $K_{n-1}$ $\in$ ${N_L}^{\bot_\infty}$. Since $\Omega^{n}L$ is a direct summand of ${N_L}$, $\Ext_{R}^{i}(\Omega^{n}L,K_{n-1})$ $=$ $0$, and so $\Ext_{R}^{n+i}(L,K_{n-1})$ $=$ $0$ for $i$ $\geq$ $1$.
Since $T'_j$ $\in$ $\mathcal{K}_\mathfrak{C}$ for $0\leq j\leq n-1$, $\Ext_{R}^{i}(L,H)$  $\cong$ $\Ext_{R}^{n+i}(L,K_{n-1})$ for $i$ $\geq$ $1$ by Lemma \ref{lem: 1.03}(1). Then $\Ext_{R}^{i}(L,H)=0$ for $i$ $\geq1$, and so $L$ $\in$ $^\bot(T^{\bot_{\infty}} \cap {N_L}^{\bot_{\infty}})$ $\subseteq$ $ ^\bot(T^{\bot_\infty} \cap \mathcal{X}^{\bot_\infty})$. It follows that $\mathcal{A}$ $\subseteq$ $ ^\bot(T^{\bot_\infty} \cap \mathcal{X}^{\bot_\infty})$, and then  $ T^{\bot_\infty}$ $\cap$ $\mathcal{X}^{\bot_\infty}$ $\subseteq$ $\mathcal{B}$. So  $\mathcal{B}$ $=$ $ T^{\bot_\infty}$ $\cap$ $\mathcal{X}^{\bot_\infty}$.

$(3)$ $\Rightarrow$ $(1)$. It is clear that $\Add T$  $\subseteq$ $\mathcal{A}$. Note that $T$ is a tilting $R$-module and each object in $\mathcal{X}$ is strongly Gorenstein projective. It follows that $\Add T$  $\subseteq$ $\mathcal{X}^{\bot_\infty}$, and so $\Add T$ $\subseteq$ $\mathcal{K}_\mathfrak{C}$ since  $\mathcal{B}$ $=$ $ T^{\bot_\infty}$ $\cap$ $\mathcal{X}^{\bot_\infty}$. For the reverse containment, we assume that $K$ $\in$ $\mathcal{K}_\mathfrak{C}$.
 Let $M$ be an $R$-module in $T^{\bot_{\infty}}$. Then there exists an exact sequence $0\ra L\ra T_{n-1} \ra \cdots \ra T_{0}\ra M\ra 0$ of $R$-modules with
$T_{j}$ $\in$ $\Add T$ for $0\leq j\leq n-1$ by \cite[Theorem 3.11]{B}. Since $\Add T$ $\subseteq$ $\mathcal{K}_\mathfrak{C}$ by the proof above, $T_{j}$ $\in$ $\mathcal{K}_\mathfrak{C}$ for $0\leq j\leq n-1$. Thus  each $T_{j}$ is in $K^{\bot_{\infty}}$, and so $\Ext_{R}^{i}(K,M)$ $\cong$ $\Ext_{R}^{n+i}(K,L)$ for $i$ $\geq$ $1$ by Lemma \ref{lem: 1.03}(1).  By assumption, $\mathcal{K}_\mathfrak{C}$ $\subseteq$ $\mathcal{P}_n$. Then $K$ $\in$  $\mathcal{P}_n$. Thus  $\Ext_{R}^{i}(K,M)$ $\cong$ $\Ext_{R}^{n+i}(K,L)$ $=$ $0$  for $i$ $\geq$ $1$. Therefore $K$ $\in$ $^\bot(T^{\bot_{\infty}})$. It is clear that $K$ $\in$ $T^{\bot_{\infty}}$. Then $K$ $\in$ $\Add T$ by noting that $\mathcal{K}_T$ $=$ $\Add T$. So $\mathcal{K}_\mathfrak{C}$ $\subseteq$ $\Add T$, as desired.

$(3)$ $\Rightarrow$ $(4)$. By assumption, $\mathcal{B}$ $=$ $G^{\bot_\infty}$ for an $R$-module $G$. Using similar arguments as in the proof of  $(2)$ $\Rightarrow$ $(3)$, one can check that $\mathcal{B}$ $=$ $T^{\bot_{\infty}}\cap {N_G}^{\bot_{\infty}}$, where $N_G$ is a strongly Gorenstein $R$-module such that $\Omega^nG$ is a direct summand of $N_G$.

$(4)$ $\Rightarrow$ $(3)$. Note that $T^{\bot_{\infty}}\cap N^{\bot_{\infty}}$ $=$ $(T\oplus N)^{\bot_{\infty}}$.  By Proposition \ref{lem: 1.9} and Remark \ref{rem: 1.0009}, we obtain the desired result.

$(2)$ $\Rightarrow$ $(5)$ follows from Lemma \ref{lem: 1803261.1}.

$(5)$ $\Rightarrow$ $(2)$ holds by Proposition \ref{lem: 1.9} by noting  that $\mathcal{K}_G$ $=$ $\mathcal{K}_\mathfrak{C}$ and $^\bot(G^{\bot_\infty})$ $=$ $\mathcal{A}$.

$(5)$ $\Rightarrow$ $(6)$. By assumption, we only need to prove that $(\Omega^n G)^{\bot_\infty}$ $=$ $M^{\bot_\infty}$. It is clear that  $M^{\bot_\infty}$  $\subseteq$ $(\Omega^n G)^{\bot_\infty}$ since $\Omega^n G$ is a direct summand of $M$. Let $L$ be an $R$-module in $(\Omega^n G)^{\bot_\infty}$. Then $\Ext_{R}^{n+i}(G,L)$ $=$ $0$ for $i$ $\geq$ $1$. By Lemma \ref{lem: 1.09}, $\Ext_{R}^{n+i}(M,L)$ $=$ $0$ for $i$ $\geq$ $1$. Since $M$ is strongly Gorenstein projective, there is an exact sequence $0\ra M\ra P\ra M\ra 0$ with $P$ projective. It follows that $\Ext_{R}^{j}(M,L)$ $=$ $0$ for $j$ $\geq$ $1$. Then we have $(\Omega^n G)^{\bot_\infty}$ $=$ $M^{\bot_\infty}$.

$(6)$ $\Rightarrow$ $(5)$. By assumption, $N$ is strongly Gorenstein projective and $(\Omega^n G)^{\bot_\infty}$ $=$ $N^{\bot_\infty}$. Note that $N^\bot$ $=$ $N^{\bot_\infty}$. It follows that $\Omega^n G$ $\in$ $^\bot(N^{\bot})$. Applying Lemma \ref{lem: 1.6}(2), $(^\bot(N^{\bot}),N^{\bot})$ is a projective cotorsion pair. By Lemma \ref{lem: 1803261.1}, there is a strongly Gorenstein projective $R$-module $M$ in $^\bot(N^{\bot})$ such that $\Omega^0(\Omega^n G)$ is a direct summand of $M$. It is clear that $\Omega^0(\Omega^n G)$ $=$ $\Omega^n G$, and so $\Omega^n G$ is a direct summand of $M$. By assumption, $N$ $\in$ $^\bot(G^{\bot_\infty})$. It follows  that $M$ $\in$ $^\bot(N^{\bot})$ $=$ $^\bot(N^{\bot_\infty})\subseteq {^\bot(G^{\bot_\infty})}$. Hence $M$ is in $^\bot(G^{\bot_\infty})$. This completes the proof.
}\hfill$\Box$

\end{para}

\begin{cor}\label{cor:1.111112} Let $T$ be an $n$-tilting $R$-module and $N$ a strongly Gorenstein projective $R$-module. Then $\mathcal{K}_{T\oplus N}$ $=$ $\Add T$.
\end{cor}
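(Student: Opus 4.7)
The plan is to recognize this as a direct application of Theorem \ref{thm: 181.100} with $G = T \oplus N$. First, I form the complete hereditary cotorsion pair $\mathfrak{C} = ({^\bot}(G^{\bot_\infty}), G^{\bot_\infty})$ supplied by Lemma \ref{lem: 20170819001}; by definition its kernel is precisely $\mathcal{K}_{T \oplus N}$.

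Next, the identity $G^{\bot_\infty} = T^{\bot_\infty} \cap N^{\bot_\infty}$ shows that condition (4) of Theorem \ref{thm: 181.100} holds verbatim with the given $T$ and $N$. The ``moreover'' hypothesis $\mathcal{B} = G^{\bot_\infty}$ is built into the definition of $\mathfrak{C}$, so the equivalence $(4) \Leftrightarrow (1)$ is available and yields $\mathcal{K}_{T \oplus N} = \Add T'$ for some $n$-tilting module $T'$.

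The remaining---and only genuinely nontrivial---task is to confirm that $T'$ may be taken to be the originally specified tilting module $T$, not an anonymous one. To that end I would inspect the chain $(4) \Rightarrow (3) \Rightarrow (1)$ inside the proof of Theorem \ref{thm: 181.100}: the step $(4) \Rightarrow (3)$ keeps the same $T$ and takes $\mathcal{X} = \{N\}$, with the missing inclusion $\mathcal{K}_\mathfrak{C} \subseteq \mathcal{P}_n$ supplied by Remark \ref{rem: 1.0009} (applied with $M = T$) in combination with Proposition \ref{lem: 1.9} for $G = T \oplus N$; the subsequent step $(3) \Rightarrow (1)$ proves the equality $\mathcal{K}_\mathfrak{C} = \Add T$ using exactly the tilting module featured in (3). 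Chaining these two preservations produces $\mathcal{K}_{T \oplus N} = \Add T$ for the originally given $T$.

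The main obstacle is thus entirely bookkeeping: ensuring that the tilting module returned by Theorem \ref{thm: 181.100} coincides with the input $T$. All the substantive work---building the correct strongly Gorenstein projective witness ($\Omega^n T \oplus N$) and deriving the Gorenstein projective bounds---is already encapsulated in Remark \ref{rem: 1.0009}, Proposition \ref{lem: 1.9}, and Theorem \ref{thm: 181.100}, so no further calculation is required.
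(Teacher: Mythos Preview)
Your proposal is correct and essentially matches the paper's approach. The paper's one-line proof simply says ``This can be checked by the proof of $(3)\Rightarrow(1)$ in Theorem \ref{thm: 181.100}''; your route $(4)\Rightarrow(3)\Rightarrow(1)$ just makes explicit the verification that condition~(3) holds (in particular the hypothesis $\mathcal{K}_\mathfrak{C}\subseteq\mathcal{P}_n$, obtained via Remark~\ref{rem: 1.0009} and Proposition~\ref{lem: 1.9}), and you correctly observe that the tilting module $T$ is preserved through these implications.
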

\begin{proof} This can be checked by the proof of $(3)$ $\Rightarrow$ $(1)$ in Theorem \ref{thm: 181.100}.
\end{proof}

  Given an infinite cardinal number $\lambda$, an $R$-module $M$ is said to be \emph{$\lambda^{<}$-generated} if it is generated by less than $\lambda$ elements. Furthermore, we have the following corollary.

\begin{cor}\label{cor:180401.01}  Let $\mathfrak{C}$ $=$ $(\mathcal{A},\mathcal{B})$ be a complete hereditary cotorsion pair of $R$-modules such that $\mathcal{K}_\mathfrak{C}$ $=$ $\Add T$ for a tilting $R$-module $T$. If each Gorenstein projective $R$-module is a direct sum of \emph{$\lambda^{<}$-generated} $R$-modules for some infinite cardinal $\lambda$, then there is an $R$-module $G$ such that $\mathcal{B}$ $=$ $G^{\bot_\infty}$.
\end{cor}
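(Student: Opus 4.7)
The plan is to combine Theorem \ref{thm: 181.100} with the $\lambda^{<}$-generation hypothesis to convert the (possibly proper) class $\mathcal{X}$ produced by the theorem into a single module. First I would invoke the implication $(1)\Rightarrow(3)$ of Theorem \ref{thm: 181.100}: since $\mathcal{K}_\mathfrak{C}=\Add T$ with $T$ an $n$-tilting module (for some $n$), there is a class $\mathcal{X}$ of strongly Gorenstein projective $R$-modules and a nonnegative integer $n$ such that
\[
\mathcal{B} \;=\; T^{\bot_\infty}\cap \mathcal{X}^{\bot_\infty}.
\]
Thus it suffices to exhibit an $R$-module $N$ with $\mathcal{X}^{\bot_\infty}=N^{\bot_\infty}$, for then $G=T\oplus N$ satisfies $\mathcal{B}=G^{\bot_\infty}$.

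To produce such $N$, I would use the hypothesis to shrink $\mathcal{X}$ to a set. Every $X\in\mathcal{X}$ is strongly Gorenstein projective, hence Gorenstein projective, so by assumption $X=\bigoplus_{j\in J_X} X_j$ with each $X_j$ a $\lambda^{<}$-generated $R$-module; note each $X_j$ is a direct summand of the Gorenstein projective $X$, so is itself Gorenstein projective. Because $\Ext^i_R(\bigoplus_j X_j,M)\cong\prod_j\Ext^i_R(X_j,M)$, the class $\mathcal{Y}=\{X_j:X\in\mathcal{X},\ j\in J_X\}$ satisfies $\mathcal{X}^{\bot_\infty}=\mathcal{Y}^{\bot_\infty}$. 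Now the isomorphism classes of $\lambda^{<}$-generated $R$-modules form a set (each such module is a quotient of $R^{(\kappa)}$ for some $\kappa<\lambda$), so up to isomorphism $\mathcal{Y}$ is contained in a set $\mathcal{S}$ of $R$-modules. Setting $N=\bigoplus_{S\in\mathcal{S}} S$, the same $\Ext$-product identity gives $\mathcal{S}^{\bot_\infty}=N^{\bot_\infty}$, and hence
\[
\mathcal{X}^{\bot_\infty}\;=\;\mathcal{Y}^{\bot_\infty}\;=\;\mathcal{S}^{\bot_\infty}\;=\;N^{\bot_\infty}.
\]

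Combining the two steps, $\mathcal{B}=T^{\bot_\infty}\cap N^{\bot_\infty}=(T\oplus N)^{\bot_\infty}$, so $G:=T\oplus N$ is the desired module. The main obstacle is the set-theoretic step: one must check carefully that the $\lambda^{<}$-generation assumption really does cut $\mathcal{X}$ down to a set and that this replacement preserves $\mathcal{X}^{\bot_\infty}$; the key ingredients here are closedness of the Gorenstein projective class under direct summands (to keep the $X_j$ inside the Gorenstein projective world, though in fact only the $\Ext$-calculation is logically needed) and the standard fact that direct sums behave contravariantly on $\Ext$ in the first variable. Everything else is a direct appeal to Theorem \ref{thm: 181.100}.
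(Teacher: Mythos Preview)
Your argument is correct and follows exactly the paper's approach: invoke Theorem~\ref{thm: 181.100} to write $\mathcal{B}=T^{\bot_\infty}\cap\mathcal{X}^{\bot_\infty}$, then use the $\lambda^{<}$-generation hypothesis to replace $\mathcal{X}$ by a set and hence by a single module $N$, and set $G=T\oplus N$. The only minor point is that you should take $\mathcal{S}$ to consist precisely of isomorphism-class representatives of the summands $X_j$ (not a larger set), so that $\mathcal{S}^{\bot_\infty}=\mathcal{Y}^{\bot_\infty}$ holds as an equality rather than just an inclusion.
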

\begin{proof} Note that $\mathcal{K}_\mathfrak{C}$ $=$ $\Add T$ for a tilting $R$-module by hypothesis.  Using Theorem \ref{thm: 181.100},  $\mathcal{B}$ $=$ $ T^{\bot_\infty}$ $\cap$ $\mathcal{X}^{\bot_\infty}$, where $\mathcal{X}$ is a class of strongly Gorenstein projective $R$-modules. By assumption, each Gorenstein projective $R$-module is a direct sum of $\lambda^{<}$-generated modules for an infinite cardinal number $\lambda$. Thus each module in $\mathcal{X}$ is a direct sum of  such modules. It is easy to see that there is an $R$-module $M$ such that $\mathcal{X}^{\bot_\infty}$ $=$ $M^{\bot_\infty}$. Let $G$ $=$ $T\oplus M$. It is clear that $\mathcal{B}$ $=$ $G^{\bot_\infty}$.
\end{proof}

\section{\bf Applications}

Following \cite{Bieri}, we denote by $\mathcal{FP}_{\infty}$ the class of $R$-modules possessing a projective resolution consisting of
 finitely generated modules. The objects of $\mathcal{FP}_{\infty}$ are sometimes referred to as the \emph{finitely ${\infty}$-presented} modules
 (see \cite{BP}). If $R$ is left Noetherian $($left coherent$)$, then  $\mathcal{FP}_{\infty}$ coincides with the class of finitely generated $($finitely presented$)$ $R$-modules.
\begin{lem} \label{lem: 2017082101}Let $R$ be a ring and $M$ be a Gorenstein projective $R$-module. If $M$ $\in$ $\mathcal{FP}_{\infty}$, then there is a strongly Gorenstein projective $R$-module $N$ such that $M$ is a direct summand of $N$, where $N$ is a direct sum of finitely generated Gorenstein projective $R$-modules.

\end{lem}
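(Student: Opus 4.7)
The plan is to construct an exact complex of finitely generated projective $R$-modules with $M$ as one of its cycles and with all cycles Gorenstein projective, and then apply Lemma~\ref{lem: 2017082101'} to this complex to produce $N$ as the direct sum of these cycles. The negative half of the complex comes easily from $M \in \mathcal{FP}_{\infty}$: fix a projective resolution $\cdots \to P_{1} \to P_{0} \to M \to 0$ with each $P_{i}$ finitely generated projective; each syzygy $\Omega^{i}M$ is then finitely generated (from $\mathcal{FP}_{\infty}$) and Gorenstein projective (closure of the Gorenstein projective class under syzygies).

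The substantive step is producing a coresolution $0 \to M \to Q^{0} \to Q^{1} \to \cdots$ by finitely generated projectives with Gorenstein projective cosyzygies, and my approach is $R$-duality. Writing $(-)^{*} := \Hom_{R}(-,R)$ and using $\Ext_{R}^{i}(M,R) = 0$ for $i \geq 1$ (from Gorenstein projectivity of $M$), dualizing the resolution above yields an infinite exact sequence
$$0 \to M^{*} \to P_{0}^{*} \to P_{1}^{*} \to P_{2}^{*} \to \cdots$$
of finitely generated projective right $R$-modules. Dualizing a totally acyclic projective complex around $M$ exhibits $M^{*}$ as Gorenstein projective over $R^{\mathrm{op}}$, and a Schanuel-type analysis of the displayed exact sequence then forces $M^{*}$ to lie in $\mathcal{FP}_{\infty}$ over $R^{\mathrm{op}}$ with the canonical biduality map $M \to M^{**}$ an isomorphism. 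Running the resolution step on $M^{*}$ then produces a finitely generated projective resolution $\cdots \to L_{1} \to L_{0} \to M^{*} \to 0$ over $R^{\mathrm{op}}$; dualizing it delivers the required finitely generated projective coresolution of $M$, whose cosyzygies are duals of the syzygies of $M^{*}$ and hence Gorenstein projective.

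Splicing the resolution and coresolution produces an exact complex $\mathbf{F}$ of finitely generated projective $R$-modules with $M$ as its zeroth cycle and with all cycles Gorenstein projective; each cycle $K^{i}$ is finitely generated, being the image in an exact complex of a map from a finitely generated projective. Lemma~\ref{lem: 2017082101'} now applies to $\mathbf{F}$ and gives that $N := \bigoplus_{i \in \mathbb{Z}} K^{i}$ is strongly Gorenstein projective. Since $M$ equals one of the $K^{i}$ it is a direct summand of $N$, and $N$ is manifestly a direct sum of finitely generated Gorenstein projective $R$-modules.

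The main obstacle is the duality step in the second paragraph --- establishing $M^{*} \in \mathcal{FP}_{\infty}$ over $R^{\mathrm{op}}$ together with the biduality $M \cong M^{**}$ from only the two hypotheses on $M$. This forces one to weave the hypotheses together: the $\mathcal{FP}_{\infty}$ condition supplies finitely generated projective resolutions whose duals stay exact thanks to $\Ext_{R}^{i}(M,R) = 0$, while Gorenstein projectivity is then essential both for forming those dual resolutions and for running the Schanuel-type comparisons that force finite generation on both sides.
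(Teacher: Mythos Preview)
Your overall plan --- build a doubly infinite exact complex of finitely generated projectives with $M$ among its cycles and all cycles Gorenstein projective, then apply Lemma~\ref{lem: 2017082101'} --- is exactly the paper's strategy, and the resolution half is handled identically. For the coresolution half the paper simply invokes \cite[Lemma~2.7]{JwandliHuNonG}; you propose to manufacture it via $R$-duality, and that is where the gap lies.

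The sentence ``Dualizing a totally acyclic projective complex around $M$ exhibits $M^{*}$ as Gorenstein projective over $R^{\mathrm{op}}$'' does not hold as stated. The totally acyclic complex furnished by Gorenstein projectivity of $M$ may have infinitely generated projective terms $P$, and $\Hom_{R}(P,R)$ is in general \emph{not} projective over $R^{\mathrm{op}}$ (e.g.\ $\Hom_{R}(R^{(I)},R)\cong R^{I}$); over $R=\mathbb{Z}$ with $I$ infinite this dual is not even Gorenstein projective, since Gorenstein projective abelian groups are free. So the dual complex, while exact, is not a complex of projectives, and nothing forces $M^{*}$ to be Gorenstein projective. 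The downstream claims then have no footing: your ``Schanuel-type analysis'' has no second resolution of $M^{*}$ to compare against, and over a non-Noetherian $R$ the kernel $M^{*}=\ker(P_{0}^{*}\to P_{1}^{*})$ of a map between finitely generated projectives need not be finitely generated. A direct construction avoids duality entirely: from $0\to M\to Q\to M'\to 0$ with $Q$ projective and $M'$ Gorenstein projective, the finitely generated $M$ lands in a finitely generated free summand $F_{0}$ of a free module containing $Q$ as a direct summand, and $F_{0}/M$ is then a direct summand of $M'\oplus(\text{projective})$, hence Gorenstein projective and in $\mathcal{FP}_{\infty}$ (its first syzygy being $M$). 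Iterating yields the required finitely generated projective coresolution with Gorenstein projective cokernels, after which your splicing argument and the appeal to Lemma~\ref{lem: 2017082101'} go through.
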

\begin{proof} Let $M$ be a Gorenstein projective $R$-module in $\mathcal{FP}_{\infty}$.
By
\cite[Lemma 2.7]{JwandliHuNonG}, there is an exact sequence $\mathbf{P_{+}}:$ $0\ra M \ra P^{0}\xrightarrow{f^0} \cdots \ra P^{n}\xrightarrow{f^n} \cdots$ with $P^i$ finitely generated projective and  $\ker(f^i)$ finitely generated Gorenstein projective for $i$ $\geq$ $0$. Note that $M$ is in $\mathcal{FP}_{\infty}$. It follows from \cite[Theorem 2.5]{H} that there exists an exact sequence $\mathbf{P_{-}}:$ $\cdots \ra P^{-n}\xrightarrow{f^{-n}} \cdots \ra P^{-1}\xrightarrow{f^{-1}} M\ra 0$, where  $P^i$ is finitely generated projective and  $\ker(f^i)$ is finitely generated Gorenstein projective for $i$ $\leq$ $-1$. Gluing the two sequences $\mathbf{P}_{-}$ and $\mathbf{P}_{+}$ above, we obtain an exact sequence $ \cdots\ra P^{-1}\xrightarrow{f^{-1}} P^{0}\xrightarrow{f^{0}} P^1\xrightarrow{f^{1}} \cdots$ of finitely generated projective $R$-modules with each $\ker(f^i)$ finitely generated Gorenstein projective. Let $N$ $=$ $\oplus \ker(f^i)$. It follows that $N$ is strongly Gorenstein projective by Lemma \ref{lem: 2017082101'} and $M$ is a direct summand of $N$.
\end{proof}

The following lemma is essentially taken from \cite[Lemma 4.4]{XiCC}, where a variation
of it appears. The proof given there carries over to the present situation.

\begin{lem}\label {lem:17062201} Let $R$ be a left Noetherian ring. Then {\rm{findim}}$R$ $=$ {\rm sup}$\{\Gpd_R M \ | \ M $ is finitely generated and $ \Gpd_R M < \infty\}$.
\end{lem}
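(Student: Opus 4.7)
The plan is to prove the two inequalities separately, relying on the standard dictionary between Gorenstein projective dimension and projective dimension for finitely generated modules together with Holm's characterization of $\Gpd$ via Ext into projectives.

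For the inequality $\mathrm{findim}(R)\leq \sup\{\Gpd_{R}M\mid M \text{ finitely generated}, \Gpd_{R}M<\infty\}$, I would take any finitely generated $R$-module $M$ with $\pd_{R}M<\infty$ and invoke \cite[Proposition 2.27]{H}, which yields $\Gpd_{R}M=\pd_{R}M$. Consequently $M$ contributes to the right-hand supremum with the value $\pd_{R}M$, so the supremum dominates $\mathrm{findim}(R)$.

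The substantive direction is the reverse inequality. Let $M$ be finitely generated with $n:=\Gpd_{R}M<\infty$. By \cite[Lemma 2.17]{LWchris} (already used in the proof of Lemma~\ref{lem: 1.8}), there exists a short exact sequence $0\to M\to H\to G\to 0$ with $G$ Gorenstein projective and $\pd_{R}H\leq n$. Because $R$ is left Noetherian, the construction of this sequence can be performed inside the category of finitely generated modules (by taking a finitely generated projective cover-style approximation at each stage), so we may assume $H$ is finitely generated; this is the first place where the Noetherian hypothesis is used and where the argument parallels \cite[Lemma 4.4]{XiCC}.

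Next I would argue that $\pd_{R}H=n$ exactly. If $\pd_{R}H=m<n$, then applying $\Hom_{R}(-,L)$ to the short exact sequence for an arbitrary projective $L$ and using $\Ext^{i}_{R}(G,L)=0$ for all $i\geq 1$ (Gorenstein projectivity of $G$) gives $\Ext^{i}_{R}(M,L)\cong \Ext^{i}_{R}(H,L)$ for every $i\geq 1$, hence these groups vanish for $i>m$. By Holm's formula \cite[Theorem 2.20]{H} this forces $\Gpd_{R}M\leq m<n$, contradicting the choice of $n$. Therefore $\pd_{R}H=n$, and since $H$ is finitely generated with finite projective dimension, $n\leq \mathrm{findim}(R)$. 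Taking the supremum over all admissible $M$ completes the proof. The main obstacle is the finitely generated refinement of \cite[Lemma 2.17]{LWchris}; once that is in hand, the Ext-vanishing comparison and Holm's formula close the argument cleanly.
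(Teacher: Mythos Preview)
Your argument is correct. The paper itself does not give a proof for this lemma; it merely records that the statement is essentially \cite[Lemma~4.4]{XiCC} and that Xi's proof carries over to the present setting. Your approach---producing a short exact sequence $0\to M\to H\to G\to 0$ with $G$ Gorenstein projective and $\pd_R H\le n$, then using the long exact sequence in $\Ext$ together with \cite[Theorem~2.20]{H} to force $\pd_R H=n$---is the standard direct route and is presumably what Xi's argument amounts to as well.

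The step you single out as the main obstacle, arranging $H$ to be finitely generated, is indeed the only place where care is needed, and it can be justified with tools already in the paper. Over a left Noetherian ring every finitely generated module lies in $\mathcal{FP}_\infty$, so by \cite[Lemma~2.7]{JwandliHuNonG} (invoked in the proof of Lemma~\ref{lem: 2017082101}) the finitely generated Gorenstein projective module $\Omega^n M$ embeds into a finitely generated projective with finitely generated Gorenstein projective cokernel; iterating the pushout construction underlying \cite[Lemma~2.17]{LWchris} then keeps all modules finitely generated. Your phrase ``projective cover-style approximation'' is slightly misleading---the construction runs in the opposite direction, embedding Gorenstein projectives into projectives---but the substance of the argument is right.
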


We are now in a position to prove Theorem \ref{thm:1706220201}.
\begin{para}\label{5.2}{\bf{Proof of Theorem \ref{thm:1706220201}}}.
 $(1)\Rightarrow(2)$ follows from \cite[Theorem 2.6]{Anegeleri} and Lemma \ref{lem: 2.5''}.

  $(2)\Rightarrow(1)$ holds by Theorem \ref{thm: 181.100} by noting that $\Gpd_R M$ $=$ $\pd_R M$ whenever $\pd_R M<\infty$.

$(1)\Rightarrow(3)$. It is clear that there is a set $\mathcal{S}$ $\subseteq$ ${\mathcal{GP}^{<\infty}}$ such that each module in ${\mathcal{GP}^{<\infty}}$ is isomorphic to an element in $\mathcal{S}$. Let $G$ $=$ $\oplus G_i$ be the direct sum of all elements $G_i$ in $\mathcal{S}$. Then $({\mathcal{GP}^{<\infty}})^{\bot_\infty}$  $=$ $\mathcal{S}^{\bot_\infty}$ $=$ $G^{\bot_\infty}$, and so $\mathfrak{C}=(^\bot((\mathcal{GP}^{<\infty})^{\bot_\infty}),\mathcal{GP}^{\bot_\infty})$ is a  complete hereditary cotorsion pair.

Assume that findim$R$ $=$ $n$ $<$ $\infty$. Then each $G_i$  $\in$ $\mathcal{GP}_n$ by Lemma \ref{lem:17062201}. It is clear that $\Omega^nG_i$ can be taken to be  finitely generated  and Gorenstein projective. For each $\Omega^nG_i$, by Lemma \ref{lem: 2017082101}, there is a strongly Gorenstein projective $R$-module $N_i$  such that $N_i$ is a direct sum of finitely generated Gorenstein projective $R$-modules and $\Omega^nG_i$ is a direct summand of $N_i$. Let $\Omega^nG$ $=$ $\oplus \Omega^n G_i$  and $N$ $=$ $\oplus N_i$. It is clear that $N$ is strongly Gorenstein projective and  $\Omega^nG$ is a direct summand of $N$. By the construction, $N$ is also a direct sum of finitely generated Gorenstein projective $R$-modules. Note that each finitely generated Gorenstein projective $R$-module is in ${\mathcal{GP}^{<\infty}}$. It follows that $N$ $\in$ $^\bot{({(\mathcal{GP}^{<\infty})}^{\bot_\infty})}$ since ${\mathcal{GP}^{<\infty}}$ $\subseteq$ $^\bot{({(\mathcal{GP}^{<\infty})}^{\bot_\infty})}$. Since $R$ is left Noetherian and each $G_i$ is finitely generated, it is easy to check that $\mathcal{K}_{\mathfrak{C}}$ $=$ $\mathcal{K}_G$ is closed under direct sums. By Theorem \ref{thm: 181.100}, $\mathcal{K}_{\mathfrak{C}}$ $=$ $\Add T$, where $T$ is a tilting $R$-module.

$(3)\Rightarrow(1)$. By hypothesis and Theorem \ref{thm: 181.100}, $^\bot(({\mathcal{GP}^{<\infty})}^{\bot_\infty})$ $\subseteq$ $\mathcal{GP}_n$. It follows that ${\mathcal{GP}^{<\infty}}$ $\subseteq$ $\mathcal{GP}_n$. So \rm{findim}$R$ $\leq$ $n$  by Lemma \ref{lem:17062201}.\hfill$\Box$
\end{para}

Recall that an $R$-module $\omega$ is said to be a \emph{Wakamatsu tilting module} \cite{Mantese,Wamastilting} if it has the following properties:

\begin{enumerate}
\item[$(W1)$] there exists an exact sequence
$\cdots\ra P_i\ra\cdots\ra  P_0\ra \omega\ra 0$
with  $P_i$ finitely generated and projective for  $i$ $\geq$ $0$;
\item [$(W2)$] $\Ext_{R}^{i}(\omega,\omega)$ $=$ $0$ for  $i$ $\geq$ $1$;

\item[$(W3)$] there exists an exact sequence $0\ra R\ra \omega_0 \xrightarrow{f_0}  \cdots \ra \omega_i\xrightarrow{f_i}\cdots$ with $\omega_i$ $\in$ add$\omega$
and $\ker(f_i)$ $\in$ $^{\bot_\infty}\omega$ for $i$ $\geq$ $0$.
\end{enumerate}

%Following \cite{GT}, we denote by $R$-mod the class of $R$-modules possessing a projective resolution consisting of
%by finitely generated modules. The elements of $R$-mod are sometimes referred to as the modules
%of type $\textrm{FP}_{\infty}$ (see \cite{Brown}). We refer to \cite{Brown}? for a detailed discussion on this matter.

The following result contains  Theorem \ref{prop2017081202'}  from the introduction.

\begin{thm} \label{prop2017081202} Let $R$ be a ring and $\omega$ a Wakamatsu tilting $R$-module. Fix an exact sequence $0\ra R\ra \omega_0 \xrightarrow{f_0} \cdots \ra \omega_i\xrightarrow{f_i}\cdots$ with $\omega_i$ $\in$ {\rm add$\omega$}
and $\ker (f_i)$ $\in$ $^{\bot_\infty}\omega$ for  $i$ $\geq$ $0$. If we set $A$ $=$ ${\bigoplus\limits_{i\geqslant0}}\ker (f_i)$, then the following are equivalent for any nonnegative integer $n$:
\begin{enumerate}
\item $\omega$ is an $n$-tilting $R$-module.
\item $\omega^{\bot_> n}$ $=$ $A^{\bot_> n}$ and $\mathcal{K}_A$ $=$ $\Add T$, where $T$ is an $n$-tilting $R$-module.
\item $\omega^{\bot_> n}$ $\supseteq$ $A^{\bot_> n}$ and $\mathcal{K}_A$ $=$ $\Add T$, where $T$ is an $n$-tilting $R$-module.
\item $\mathcal{K}_{\omega\oplus A}$ $=$ $\Add T$, where $T$ is an $n$-tilting $R$-module.

\end{enumerate}
\end{thm}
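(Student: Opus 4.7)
The plan is to establish the four-way equivalence by proving the cycle $(1)\Rightarrow(2)\Rightarrow(3)\Rightarrow(4)\Rightarrow(1)$, with Theorem~\ref{thm: 181.100} as the main engine in both directions.

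For $(1)\Rightarrow(2)$, the crux is to show $\pd_R\ker(f_i)\leq n$ for every $i\geq 0$, since this immediately yields $A^{\bot_{>n}}=R\text{-Mod}=\omega^{\bot_{>n}}$. Starting from $\ker(f_0)=R$, I would use the short exact sequences $0\to\ker(f_{i-1})\to\omega_{i-1}\to\ker(f_i)\to 0$ with $\omega_{i-1}\in\add\omega\subseteq\mathcal{P}_n$, together with the tilting coresolution $0\to R\to T_0\to\cdots\to T_n\to 0$ granted by $(T3)$. A comparison (via a Horseshoe/Schanuel-type argument exploiting $\ker(f_i)\in{}^{\bot_\infty}\omega$) shows that each $\ker(f_i)$ differs from the $i$-th syzygy of the tilting coresolution only by an $\add\omega$-summand, so $\ker(f_i)\in\mathcal{P}_n$. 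Once $\pd_R A\leq n$ is in hand, Lemma~\ref{lem: 20170819001} gives the complete hereditary cotorsion pair $\mathfrak{C}=({}^\bot(A^{\bot_\infty}),A^{\bot_\infty})$, Lemma~\ref{lem: 1.09}(2) places its left class inside $\mathcal{P}_n$, and closure of $A^{\bot_\infty}$ under direct sums follows from the fact that $A$ admits a projective resolution of finite length. Hence $A^{\bot_\infty}$ is an $n$-tilting class, and Lemma~\ref{lem: 2.5''} yields $\mathcal{K}_A=\Add T$ for an $n$-tilting $T$.

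The implication $(2)\Rightarrow(3)$ is immediate. For $(3)\Rightarrow(4)$, assume $\omega^{\bot_{>n}}\supseteq A^{\bot_{>n}}$ and $\mathcal{K}_A=\Add T$ for an $n$-tilting $T$. By applying Theorem~\ref{thm: 181.100} (equivalence of (1) and (4)) to the cotorsion pair generated by $A$, I obtain a strongly Gorenstein projective $N$ in ${}^\bot(A^{\bot_\infty})$ such that $A^{\bot_\infty}=T^{\bot_\infty}\cap N^{\bot_\infty}$. Consequently,
\[(\omega\oplus A)^{\bot_\infty}=\omega^{\bot_\infty}\cap A^{\bot_\infty}=(\omega\oplus T\oplus N)^{\bot_\infty}.\]
I would then verify the hypotheses of Theorem~\ref{thm: 181.100}(2) for the cotorsion pair $({}^\bot((\omega\oplus A)^{\bot_\infty}),(\omega\oplus A)^{\bot_\infty})$: the containment $\mathcal{K}_{\omega\oplus A}\subseteq\mathcal{P}_n$ will come from the hypothesis $\omega^{\bot_{>n}}\supseteq A^{\bot_{>n}}$ combined with the special precover $0\to M\to B\to L\to 0$ of any module by elements of $A^{\bot_\infty}$ and ${}^\bot(A^{\bot_\infty})\subseteq\mathcal{GP}_n$; the Gorenstein-projective condition on the left class follows from a similar dimension-shifting argument; and closure of the kernel under direct sums is inherited from $\mathcal{K}_A$ together with finite presentation of $\omega$ (via $(W1)$). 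Theorem~\ref{thm: 181.100} then delivers $\mathcal{K}_{\omega\oplus A}=\Add T'$ for some $n$-tilting $T'$.

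Finally, $(4)\Rightarrow(1)$ is the cleanest step: applying Theorem~\ref{thm: 181.100}(2) to the cotorsion pair $({}^\bot((\omega\oplus A)^{\bot_\infty}),(\omega\oplus A)^{\bot_\infty})$ yields $\mathcal{K}_{\omega\oplus A}\subseteq\mathcal{P}_n$. To conclude, I verify $\omega\in\mathcal{K}_{\omega\oplus A}$. For $\omega\in(\omega\oplus A)^{\bot_\infty}$: by $(W1)$, $\omega\in\mathcal{FP}_\infty$, so $\Ext_R^i$ commutes with direct sums in the second argument, and $(W2)$ gives $\Ext_R^i(\omega,\omega)=0$, while $(W3)$ gives $\Ext_R^i(\ker f_j,\omega)=0$. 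For $\omega\in{}^\bot((\omega\oplus A)^{\bot_\infty})$: tautologically $(\omega\oplus A)^{\bot_\infty}\subseteq\omega^{\bot_\infty}$, so $\Ext_R^1(\omega,M)=0$ whenever $M\in(\omega\oplus A)^{\bot_\infty}$. Thus $\pd_R\omega\leq n$, and combined with $(W1),(W2),(W3)$ this upgrades $\omega$ to an $n$-tilting module.

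The main obstacle lies in $(3)\Rightarrow(4)$: we must leverage only the high-degree containment $\omega^{\bot_{>n}}\supseteq A^{\bot_{>n}}$ (not an equality) to establish the stronger global properties needed to invoke Theorem~\ref{thm: 181.100}, and $\omega$ itself need not appear as a summand of any $\omega_i$, so controlling its behavior in the $\omega\oplus A$ cotorsion pair requires a delicate use of the strongly Gorenstein projective witness $N$ and the hereditary structure. A secondary obstacle is the uniform bound $\pd_R\ker(f_i)\leq n$ in $(1)\Rightarrow(2)$, which forces a careful comparison of the given Wakamatsu coresolution with the tilting coresolution rather than a naive induction (which would only give $\pd_R\ker(f_i)\leq n+i$).
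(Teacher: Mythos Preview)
Your cycle $(1)\Rightarrow(2)\Rightarrow(3)\Rightarrow(4)\Rightarrow(1)$ matches the paper's, and most steps are in the right spirit, but the final implication $(4)\Rightarrow(1)$ has a genuine gap.

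You correctly show $\omega\in\mathcal{K}_{\omega\oplus A}\subseteq\mathcal{P}_n$, hence $\pd_R\omega\le n$, and then assert that ``combined with $(W1),(W2),(W3)$ this upgrades $\omega$ to an $n$-tilting module.'' But this last inference is exactly the Wakamatsu Tilting Conjecture (cf.\ Remark~\ref{rem: 2017081701} and Corollary~\ref{cor:2018081601}): knowing that a Wakamatsu tilting module has finite projective dimension does \emph{not} yield $(T3)$ in general. Conditions $(T1)$ and $(T2)$ do follow from your argument, but the infinite $\add\omega$-coresolution in $(W3)$ need not truncate. The paper avoids this by extracting from Theorem~\ref{thm: 181.100} the stronger conclusion $^\bot((\omega\oplus A)^{\bot_\infty})\subseteq\mathcal{GP}_n$, not merely $\mathcal{K}_{\omega\oplus A}\subseteq\mathcal{P}_n$. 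Since $\im(f_n)=\ker(f_{n+1})$ is a summand of $A$ and hence lies in $^\bot((\omega\oplus A)^{\bot_\infty})$, one gets $\Gpd_R\im(f_n)\le n$, so $\Ext_R^{n+1}(\im(f_n),R)=0$. Dimension shifting along $0\to R\to\omega_0\to\cdots\to\omega_{n-1}\to\im(f_{n-1})\to 0$ (using $\im(f_n)\in{}^{\bot_\infty}\omega$) then gives $\Ext_R^1(\im(f_n),\im(f_{n-1}))=0$, so $0\to\im(f_{n-1})\to\omega_n\to\im(f_n)\to 0$ splits and $\im(f_{n-1})\in\add\omega$, yielding $(T3)$.

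A smaller issue in $(1)\Rightarrow(2)$: the closure of $A^{\bot_\infty}$ under direct sums does \emph{not} follow from ``$A$ admits a projective resolution of finite length''; finite projective dimension is irrelevant here. What is needed is that each $\ker(f_i)\in\mathcal{FP}_\infty$ (which does follow from $(W1)$ and $\omega_i\in\add\omega$), so that $\Ext_R^j(\ker(f_i),-)$ commutes with direct sums. The paper also handles the bound $\pd_R\ker(f_i)\le n$ more directly than your Schanuel-type comparison: one first shows $\Ext_R^{n+j}(\ker(f_i),P)=0$ for every projective $P$ by shifting along the tilting coresolution $0\to P\to T_0\to\cdots\to T_n\to 0$ (with $T_k\in\Add\omega\subseteq A^{\bot_\infty}$), and then observes that $\pd_R\ker(f_i)<\infty$ already holds since the $\omega_j$ have $\pd\le n$.
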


\begin{proof}$(1)$ $\Rightarrow$ $(2)$. Assume that $\omega$ is an $n$-tilting $R$-module. To prove $\omega^{\bot_>n}$ $=$ $A^{\bot_> n}$, it is sufficient to show that $A$ $\in$ $\mathcal{P}_n$.  Note that $\ker (f_i)$ $\in$ $\mathcal{FP}_{\infty}$ for any $i\geq0$ by \cite[Theorem 1.8]{BP}. It follows from \cite[Lemma 3.1.6]{GT} that $A^{\bot_\infty}$ is closed under direct sums. Hence $\mathcal{K}_A$ is closed under direct sums. Let $P$ be any projective $R$-module. Since $\omega$ is an $n$-tilting $R$-module, there is an exact sequence $$0\ra P\ra T_{0}\ra \cdots\ra T_{n}\ra 0$$ with $T_{i}$ $\in$ $\Add \omega$ for $0\leq i\leq n$. By the proof above, $A ^{\bot_\infty}$ is closed under direct sums. Then $\Add \omega$ $\subseteq$ $A^{\bot_\infty}$. It follows from Lemma \ref{lem: 1.03}(1) that $\Ext_{R}^{n+j}(A,P)$ $\cong$ $\Ext_{R}^{j}(A,T_n)$ for $j\geq1$, and so $\Ext_{R}^{n+j}(A,P)$ $=$ $0$ for $j\geq1$. Hence $\Ext_{R}^{n+j}(\ker(f_i),P)$ $=$ $0$ for $i\geq0$ and $j\geq1$. On the other hand, since $\textrm{pd}_{R}\omega_i\leq n$ for any $\omega_i$ $\in$  add$\omega$, we have that $\textrm{pd}_{R}\ker(f_i)<\infty$. This implies that each  $\textrm{pd}_{R}\ker(f_i)\leq n$  by \cite[Theorem 2.20 and Proposition 2.27]{H}, and then $A$ $\in$ $\mathcal{P}_n$. By Lemma \ref{lem: 1.09}, $^\bot(A^{\bot_\infty})$ $\subseteq$ $\mathcal{P}_n$. Applying Theorem \ref{thm: 181.100}, we obtain that  $\mathcal{K}_A$ $=$ $\Add T$.

$(2)$ $\Rightarrow$ $(3)$ is clear.

 $(3)$ $\Rightarrow$ $(4)$. By Theorem \ref{thm: 181.100}, there is a strongly Gorenstein projective $R$-module $N$ in $^\bot(A^{\bot_\infty})$ such that  $(\Omega^n A)^{\bot_\infty}$ $=$ $N^{\bot_\infty}$. By assumption, $\omega^{\bot_>n}$ $\supseteq$ $A^{\bot_> n}$. It follows that $(\Omega^n \omega)^{\bot_\infty}$ $\supseteq$ $(\Omega^n A)^{\bot_\infty}$. Then $(\Omega^n(\omega\oplus A))^{\bot_\infty}$ $=$ $(\Omega^n\omega)^{\bot_\infty}$ $\cap$ $(\Omega^n A)^{\bot_\infty}$ $=$ $N^{\bot_\infty}$. It is clear that $N$ is in $^\bot((\omega \oplus A)^{\bot_\infty})$. Using similar arguments as in the proof of  $(1)$ $\Rightarrow$ $(2)$, one can check that $\mathcal{K}_{\omega\oplus A}$ is closed under direct sums. By Theorem \ref{thm: 181.100}, we obtain that $\mathcal{K}_{\omega\oplus A}$ $=$ $\Add T$, where $T$ is an $n$-tilting $R$-module.

$(4)$ $\Rightarrow$ $(1)$. Since $\Ext_{R}^{i}(\im (f_{n}),N)=0$ for any $N$ $\in$ add$\omega$ and $i\geq1$ by hypothesis, $\Ext^1_R(\im (f_n),\im(f_{n-1}))$ $\cong$ $\Ext^{n+1}_R(\im(f_n),R)$ by Lemma \ref{lem: 1.03}(1). It follows from Theorem \ref{thm: 181.100} that $\im (f_{n})\in{\mathcal{GP}_n}$, and hence $\Ext^1_R(\im (f_n),\im (f_{n-1}))=0$. Consequently, we obtain the following exact sequence $$0\ra R\ra \omega_0 \xrightarrow{f_0} \omega_1 \xrightarrow{f_1} \cdots \ra \omega_{n-1}\xrightarrow{f_{n-1}}\im (f_{n-1})\ra 0$$ with $\im (f_{n-1})$ $\in$ add$\omega$ and $\omega_i$ $\in$ add$\omega$ for $0\leq i\leq n-1$. So $\omega$ is an $n$-tilting $R$-module.
\end{proof}

\begin{cor}\label{cor:2018081601} Let $\omega$ be a Wakamatsu tilting  $R$-module with finite projective dimension.  Keep the notations as in Theorem \ref{prop2017081202}. Then the following are equivalent:
\begin{enumerate}
\item $\omega$ is a  tilting  $R$-module.
\item  $\mathcal{K}_A $ $=$ $\Add T$, where $T$ is a tilting $R$-module.
\item  $\mathcal{K}_{\omega\oplus A}$ $=$ $\Add T$, where $T$ is a tilting $R$-module.
\end{enumerate}
\end{cor}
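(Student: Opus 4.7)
The plan is to reduce the corollary to Theorem \ref{prop2017081202} by choosing a nonnegative integer $n$ large enough to dominate both $\pd_R\omega$ and the projective dimension of any tilting module $T$ appearing in (2) or (3). The role of the hypothesis $\pd_R\omega<\infty$ is precisely to render the auxiliary condition $\omega^{\bot_>n}\supseteq A^{\bot_>n}$ of Theorem \ref{prop2017081202}(3) automatic, so that one only needs to check the kernel equality.

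For (1) $\Rightarrow$ (2) and (1) $\Rightarrow$ (3) the plan is direct: set $n:=\pd_R\omega$, so that $\omega$ is an $n$-tilting $R$-module, and then invoke the implications (1) $\Rightarrow$ (2) and (1) $\Rightarrow$ (4) of Theorem \ref{prop2017081202} to produce $\mathcal{K}_A=\Add T$ and $\mathcal{K}_{\omega\oplus A}=\Add T$ with $T$ an $n$-tilting (in particular, tilting) $R$-module.

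For the converses (2) $\Rightarrow$ (1) and (3) $\Rightarrow$ (1), assume the corresponding kernel is $\Add T$ for a tilting $T$ and set $n:=\max(\pd_R T,\pd_R\omega)$, which is finite by hypothesis. Since $\pd_R T\leq n$, the module $T$ is $n$-tilting; since $\pd_R\omega\leq n$, the functor $\Ext^{n+i}_R(\omega,-)$ vanishes for every $i\geq 1$, so $\omega^{\bot_>n}$ coincides with $R$-Mod and in particular contains $A^{\bot_>n}$. Therefore the hypotheses of Theorem \ref{prop2017081202}(3) are met in case (2), and those of Theorem \ref{prop2017081202}(4) in case (3); the implications (3) $\Rightarrow$ (1) and (4) $\Rightarrow$ (1) of Theorem \ref{prop2017081202} then yield that $\omega$ is $n$-tilting, and hence tilting.

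The main obstacle is the dimensional bookkeeping just described: choosing one $n$ that simultaneously bounds $\pd_R T$ and $\pd_R\omega$, and checking that $T$ being tilting really does imply $T$ is $n$-tilting for this $n$ (which is immediate from properties (T1)--(T3), since enlarging the bound on the projective dimension preserves all three). No substantive new technical difficulty should arise beyond this reindexing, so the corollary is a genuine consequence of Theorem \ref{prop2017081202} under the finite projective-dimension hypothesis.
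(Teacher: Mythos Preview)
Your proposal is correct and is essentially the argument the paper has in mind: the corollary is stated without proof as an immediate consequence of Theorem~\ref{prop2017081202}, and your reduction---choosing $n=\max(\pd_R T,\pd_R\omega)$ so that $T$ is $n$-tilting and $\omega^{\bot_{>n}}=R\text{-Mod}\supseteq A^{\bot_{>n}}$---is precisely the intended bookkeeping. One small remark: for the implication $(3)\Rightarrow(1)$ you do not actually need $\pd_R\omega$ in the maximum, since condition~(4) of Theorem~\ref{prop2017081202} carries no auxiliary hypothesis on $\omega^{\bot_{>n}}$; the finite projective dimension of $\omega$ is only genuinely used in $(2)\Rightarrow(1)$.
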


\noindent\begin{rem} \label{rem: 2017081701} \rm{
It is still an open problem whether a Wakamatsu tilting $R$-module of
finite projective dimension must be a tilting $R$-module whenever $R$ is an Artin algebra. This is known as Wakamatsu Tilting
Conjecture (see \cite[Chapter IV]{Beli and Reiten}). Mantese and Reiten  \cite{Mantese} showed that the Wakamatsu Tilting Conjecture is a special case of the Finitistic Dimension Conjecture. These conjectures are also related to many other homological conjectures and attract
many algebraists, see for instance \cite{Anegeleri,Beli and Reiten,Celikbas,Mantese,Wei2011,XiCC,Zimmermann}.}
\end{rem}
 Let $R$ be a ring. A left $R$-module $M$ is called  \emph{$\mathcal{FP}_{\infty}$-injective} (or \emph{absolutely clean}) \cite{BGGIAC} if $\Ext^1_R(N,M)$ $=$ $0$ for all $R$-modules $N$ $\in$ $\mathcal{FP}_{\infty}$. Let $\mathcal{FP}_{\infty}$-Inj be the class of $\mathcal{FP}_{\infty}$-injective $R$-modules, then $\mathcal{FP}_{\infty}$-Inj $=$ $(\mathcal{FP}_{\infty})^{\bot_{\infty}}$ by \cite[Propositon 2.7]{BGGIAC}. It is clear that there is an $R$-module $G$ such that $\mathcal{FP}_{\infty}$-Inj $=$ $G^{\bot_{\infty}}$.  Thus $(^\bot(\mathcal{FP}_{\infty}$-Inj), $\mathcal{FP}_{\infty}$-Inj$)$ is a complete hereditary cotorsion pair over a general ring.

  Recall that a left $R$-module $M$ is \emph{$FP$-injective} (or \emph{absolutely pure}) \cite{MA,S} provided that ${\rm Ext}_R^{1}(N,M)=0$ for all finitely presented left $R$-modules $N$. If $R$ is left coherent, then  $\mathcal{FP}_{\infty}$-injective $R$-modules coincides with the class of $FP$-injective $R$-modules.

The \emph{$FP$-injective dimension} of $M$ is defined to be the least nonnegative integer $n$ such that $\Ext^{n+1}_{R}(N,M)$ $=$ $0$ for all finitely presented left $R$-modules $N$.

A coherent ring $R$ is called \emph{$n$-$FC$} \cite{DC cohernt} if $R$ has left and right $FP$-injective dimension at most $n$.  In the case of Noetherian rings, an $n$-$FC$ ring coincides with an $n$-Gorenstein ring originally defined by Iwanaga in \cite{Iwer}.

\begin{prop} \label{prop:1832801} Let $R$ be a ring and $\mathcal{K}_\mathfrak{C}$ the kernel of $\mathfrak{C}$ $=$ $(^\bot(\mathcal{FP}_{\infty}$-Inj), $\mathcal{FP}_{\infty}$-Inj$)$. Then the following are true for any nonnegative integer $n$:
\begin{enumerate}
\item $\mathcal{FP}_{\infty}$ $\subseteq$ $\mathcal{GP}_n$ if and only if $\mathcal{K}_\mathfrak{C}$ $=$ $\Add T$, where $T$ is an $n$-tilting $R$-module.
\item If $R$ is a commutative coherent ring, then $R$ is $n$-FC if and only if $\mathcal{K}_\mathfrak{C}$ $=$ $\Add T$, where $T$ is an $n$-tilting $R$-module.
\end{enumerate}
\end{prop}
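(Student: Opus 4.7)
The plan is to reduce both parts to Theorem \ref{thm: 181.100} applied to the hereditary complete cotorsion pair $\mathfrak{C} = ({^\bot}(\mathcal{FP}_{\infty}\text{-Inj}), \mathcal{FP}_{\infty}\text{-Inj})$. First I set up a convenient generator: choose a set $\mathcal{S}\subseteq\mathcal{FP}_{\infty}$ of isomorphism-class representatives (a legitimate set because every $S\in\mathcal{FP}_{\infty}$ satisfies $|S|\leq\aleph_{0}\cdot|R|$) and put $G=\bigoplus_{S\in\mathcal{S}}S$. Then $\mathcal{FP}_{\infty}\text{-Inj}=(\mathcal{FP}_{\infty})^{\bot_{\infty}}=G^{\bot_{\infty}}$, which places us inside the ``moreover'' regime of Theorem \ref{thm: 181.100}. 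Three elementary facts will be used repeatedly: $\mathcal{K}_{\mathfrak{C}}$ is closed under direct sums (since $\Ext_{R}^{i}(S,-)$ commutes with direct sums for each $S\in\mathcal{FP}_{\infty}$); $\mathcal{FP}_{\infty}\subseteq{^\bot}(\mathcal{FP}_{\infty}\text{-Inj})=\mathcal{A}$; and every finitely generated Gorenstein projective module lies in $\mathcal{FP}_{\infty}$, hence in $\mathcal{A}$.

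For part (1), if $\mathcal{FP}_{\infty}\subseteq\mathcal{GP}_{n}$, then each $\Omega^{n}S$ is a finitely generated Gorenstein projective module, so Lemma \ref{lem: 2017082101} produces a strongly Gorenstein projective $N_{S}$, itself a direct sum of finitely generated Gorenstein projective modules, with $\Omega^{n}S$ a direct summand. Setting $N=\bigoplus_{S\in\mathcal{S}}N_{S}$ yields a strongly Gorenstein projective module that lies in $\mathcal{A}$ and has $\Omega^{n}G=\bigoplus_{S}\Omega^{n}S$ as a direct summand. Condition (5) of Theorem \ref{thm: 181.100} therefore holds, whence $\mathcal{K}_{\mathfrak{C}}=\Add T$ for some $n$-tilting $T$. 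Conversely, the implication (1) $\Rightarrow$ (2) of Theorem \ref{thm: 181.100} gives $\mathcal{A}\subseteq\mathcal{GP}_{n}$, and $\mathcal{FP}_{\infty}\subseteq\mathcal{A}$ yields the desired inclusion.

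For part (2), the commutative coherent hypothesis identifies $\mathcal{FP}_{\infty}$ with the class of finitely presented $R$-modules, so by part (1) it remains to prove the equivalence: $R$ is $n$-$FC$ iff every finitely presented module has Gorenstein projective dimension at most $n$. The easy direction is immediate: if $\Gpd_{R}N\leq n$ for every finitely presented $N$, then $\Ext_{R}^{n+i}(N,R)\cong\Ext_{R}^{i}(\Omega^{n}N,R)=0$ for $i\geq 1$ because $\Omega^{n}N$ is Gorenstein projective, so the FP-injective dimension of $R$ is at most $n$. The main obstacle is the converse. Given a finitely presented $N$, put $K=\Omega^{n}N$; the $n$-$FC$ hypothesis forces $\Ext_{R}^{i}(K,R)=\Ext_{R}^{n+i}(N,R)=0$ for $i\geq 1$, and because $K$ is finitely presented over a coherent ring this upgrades to $\Ext_{R}^{i}(K,P)=0$ for every projective $P$, producing only the left half of a totally acyclic projective complex at $K$. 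The right half is constructed by dualizing a finitely generated projective presentation of $K$, using that $K^{\ast}=\Hom_{R}(K,R)$ is finitely presented (coherence) and that commutativity together with the $n$-$FC$ assumption, via Ding--Chen's analysis \cite{DC cohernt}, forces $K$ to be reflexive with $\Ext_{R}^{i}(K^{\ast},R)=0$ for $i\geq 1$. Splicing the resulting coresolution of $K$ with its finitely generated projective resolution provides the required totally acyclic complex, so $K$ is Gorenstein projective and $\Gpd_{R}N\leq n$, completing the argument.
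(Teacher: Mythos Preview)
Your proof is correct and matches the paper's approach: for (1) both you and the paper build, via Lemma~\ref{lem: 2017082101}, a strongly Gorenstein projective $N\in\mathcal{A}$ containing $\Omega^{n}G$ as a summand (the paper simply points to the parallel construction in the proof of Theorem~\ref{thm:1706220201}) and then invoke Theorem~\ref{thm: 181.100}, while for (2) the paper directly cites \cite[Theorem~7]{DC cohernt} for the equivalence of $n$-$FC$ with $\mathcal{FP}\subseteq\mathcal{GP}_n$, which is precisely what your expanded sketch ends up relying on as well. One small caution: your blanket ``elementary fact'' that every finitely generated Gorenstein projective module lies in $\mathcal{FP}_\infty$ is not obvious over an arbitrary ring, but you only need it for the specific summands produced inside the proof of Lemma~\ref{lem: 2017082101}, and those do lie in $\mathcal{FP}_\infty$ by construction, so the argument is unaffected.
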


\begin{proof} $(1)$ $``\Rightarrow"$. Assume that $\mathcal{FP}_{\infty}$ $\subseteq$ $\mathcal{GP}_n$. It is clear that $\mathcal{FP}_{\infty}$-Inj $=$ $G^{\bot_{\infty}}$ for an $R$-module $G$. Using similar arguments as in the proof of  Theorem \ref{thm:1706220201}, there is a strongly Gorenstein projective $R$-module $N$ in  $^\bot(\mathcal{FP}_{\infty}$-Inj) such that $\Omega^n G$ is a direct summand of $N$. It is clear that $\mathcal{K}_\mathfrak{C}$ is closed under direct sums.  By Theorem \ref{thm: 181.100}, $\mathcal{K}_\mathfrak{C}$ $=$ $\Add T$, where $T$ is an $n$-tilting $R$-module.

$``\Leftarrow"$. The proof follows from Theorem \ref{thm: 181.100}.

$(2)$ Let $\mathcal{FP}$ be the class of finitely presented $R$-modules. If $R$ is a commutative coherent ring, then $\mathcal{FP}$ $=$ $\mathcal{FP}_{\infty}$ and  $\mathcal{FP}_{\infty}$-injective $R$-modules coincides with the class of $FP$-injective $R$-modules. The proof follows from \cite[Theorem 7]{DC cohernt} and $(1)$.
\end{proof}

Recall that an $R$-module $M$ is called \emph{Gorenstein injective} \cite{EJ GP} if there exists an exact sequence $\mathbf{I}:\cdots\ra I_1\ra I_0\ra I^0\ra I^1\ra \cdots$ of injective $R$-modules with $M\cong \textrm{im}(I_0\ra I^0)$ such that $\Hom_{R}(E,\mathbf{I})$ is exact for every injective $R$-module $E$.

 Let $\mathcal{GI}$ be the class of Gorenstein injective $R$-modules. By \cite[Theorem 4.6]{JJarxiv}, the cotorsion pair $\mathfrak{C}$ $=$ ($^\bot\mathcal{GI}$, $\mathcal{GI}$) is complete. It is easy to check that $\mathfrak{C}$ is hereditary and $\mathcal{K}_\mathfrak{C}$ coincides with the class of injective $R$-modules. The following result contains $(1)\Leftrightarrow(2)\Leftrightarrow(3)$ of Theorem \ref{thm:1.4} from the introduction.

\begin{prop} \label{prop20170806001} Let $R$ be a ring and $\mathcal{I}$ the class of injective $R$-modules. The following are equivalent for a nonnegative integer $n$:
\begin{enumerate}
\item $R$ is left Noetherian and $R$-{\rm Mod} $=$ $\mathcal{GP}_n$.
\item $\mathcal{K}_\mathfrak{C}$ $=$ $\Add T$, where $\mathfrak{C}$ $=$ $(^\bot\mathcal{GI}$, $\mathcal{GI})$ and $T$ is an $n$-tilting $R$-module.
\item  $\mathcal{K}_\mathfrak{C}$ $=$ $\Add T$, where $\mathfrak{C}$ $=$ {\rm($R$-{\rm Mod}, $\mathcal{I}$)} and $T$ is an $n$-tilting $R$-module.
\end{enumerate}
\end{prop}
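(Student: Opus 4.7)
My plan is to reduce the proposition to Theorem \ref{thm: 181.100} applied to the cotorsion pair $\mathfrak{C}=(R\text{-}{\rm Mod},\mathcal{I})$. First I would show $(2)\Leftrightarrow(3)$ by checking that both cotorsion pairs have the same kernel, namely the class $\mathcal{I}$ of injective $R$-modules. For $(R\text{-}{\rm Mod},\mathcal{I})$ this is tautological; for $({}^\bot\mathcal{GI},\mathcal{GI})$, every injective $E$ lies in $\mathcal{GI}$ and also in ${}^\bot\mathcal{GI}$ because $\Hom_R(E,-)$ preserves exactness of any totally acyclic complex of injectives, which forces $\Ext_R^i(E,N)=0$ for $i\geq1$ and every Gorenstein injective $N$. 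Conversely, a module $M\in\mathcal{GI}\cap{}^\bot\mathcal{GI}$ embeds into an injective $I$ with cokernel $N$ Gorenstein injective, and $\Ext_R^1(N,M)=0$ forces the sequence $0\to M\to I\to N\to 0$ to split, so $M$ is injective. Hence both conditions amount to $\mathcal{I}=\Add T$ for an $n$-tilting module $T$.

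For $(3)\Rightarrow(1)$, since $\Add T$ is closed under arbitrary direct sums, so is $\mathcal{I}$, which by Bass's theorem forces $R$ to be left Noetherian. Applying the implication $(1)\Rightarrow(2)$ of Theorem \ref{thm: 181.100} to the pair $(R\text{-}{\rm Mod},\mathcal{I})$ then yields $R\text{-}{\rm Mod}=\mathcal{A}\subseteq\mathcal{GP}_n$. For the converse $(1)\Rightarrow(3)$, I would verify the three hypotheses of condition $(2)$ of Theorem \ref{thm: 181.100} for the same pair: the inclusion $\mathcal{A}\subseteq\mathcal{GP}_n$ is the assumption, and closure of $\mathcal{K}_\mathfrak{C}=\mathcal{I}$ under direct sums is again Bass. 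The only nontrivial condition is $\mathcal{I}\subseteq\mathcal{P}_n$, i.e., $\pd_R E\leq n$ for every injective $R$-module $E$.

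To establish this last inclusion I would argue as follows. By hypothesis $\Gpd_R E\leq n$, so an $n$-th syzygy $\Omega^n E$ taken from a projective resolution of $E$ is Gorenstein projective. The definition then yields an exact sequence $0\to\Omega^n E\to Q\to L\to 0$ with $Q$ projective and $L$ Gorenstein projective. Using the projective resolution of $E$ together with the dimension shifting of Lemma \ref{lem: 1.03}, one finds
$$\Ext_R^1(L,\Omega^n E)\;\cong\;\Ext_R^{n+1}(L,E)\;=\;0,$$
because $E$ is injective, so the sequence splits, $\Omega^n E$ becomes a direct summand of $Q$, and hence $\pd_R E\leq n$. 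With all three hypotheses checked, Theorem \ref{thm: 181.100}$(2)\Rightarrow(1)$ produces the desired $n$-tilting module $T$ with $\mathcal{K}_\mathfrak{C}=\Add T$, giving $(3)$. I expect the principal obstacle to be precisely this syzygy-splitting step, since the computation of the two kernels in paragraph one and the Bass-theorem reductions are routine; once $\pd_R E\leq n$ is in hand for every injective $E$, the result is essentially a direct invocation of the main theorem.
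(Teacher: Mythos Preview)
Your overall plan matches the paper's: reduce everything to Theorem~\ref{thm: 181.100} applied to the pair with kernel $\mathcal{I}$, using Bass' theorem for the Noetherian step. There are, however, two slips.

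\medskip

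\textbf{The main gap.} Your dimension-shifting isomorphism is oriented the wrong way. From the projective resolution $0\to\Omega^{n}E\to P_{n-1}\to\cdots\to P_{0}\to E\to 0$ and Lemma~\ref{lem: 1.03}(1) (with $M=L$, using that $L$ is Gorenstein projective so $\Ext_R^{i}(L,P_s)=0$) one obtains
\[
\Ext_R^{\,n+j}(L,\Omega^{n}E)\;\cong\;\Ext_R^{\,j}(L,E),
\]
not $\Ext_R^{1}(L,\Omega^{n}E)\cong\Ext_R^{\,n+1}(L,E)$. So injectivity of $E$ only kills $\Ext_R^{\,n+j}(L,\Omega^{n}E)$ for $j\geq1$, which does not split your sequence $0\to\Omega^{n}E\to Q\to L\to 0$. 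The paper sidesteps this by quoting \cite[Theorem~4.1]{I on the fin} for $\mathcal{I}\subseteq\mathcal{P}_n$. Your argument can be repaired, but it needs one more shift: since $L$ is Gorenstein projective there is an exact sequence $0\to L\to Q^{0}\to\cdots\to Q^{n-1}\to L'\to 0$ with the $Q^{i}$ projective and $L'$ Gorenstein projective; Lemma~\ref{lem: 1.03}(2) then gives $\Ext_R^{1}(L,\Omega^{n}E)\cong\Ext_R^{\,n+1}(L',\Omega^{n}E)$, and the first isomorphism (now applied with $L'$ in place of $L$) identifies this with $\Ext_R^{1}(L',E)=0$. With this extra cosyzygy step your splitting argument goes through.

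\medskip

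\textbf{A minor slip.} In showing $\mathcal{K}_{({}^{\bot}\mathcal{GI},\mathcal{GI})}\subseteq\mathcal{I}$, from $M\in\mathcal{GI}\cap{}^{\bot}\mathcal{GI}$ you should use the short exact sequence $0\to K\to I_{0}\to M\to 0$ coming from the \emph{left} half of the totally acyclic complex ($K$ Gorenstein injective, $I_{0}$ injective): then $M\in{}^{\bot}\mathcal{GI}$ gives $\Ext_R^{1}(M,K)=0$, the sequence splits, and $M$ is injective. Your sequence $0\to M\to I\to N\to 0$ would require $\Ext_R^{1}(N,M)=0$, i.e.\ $N\in{}^{\bot}\mathcal{GI}$, which is not available since ${}^{\bot}\mathcal{GI}$ need not be closed under cokernels.
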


\begin{proof} $(1)$ $\Rightarrow$ $(2)$. It is easy to check that  $\mathcal{K}_\mathfrak{C}$ $=$ $\mathcal{I}$. Then $\mathcal{K}_\mathfrak{C}$ is closed under direct sums since $R$ is left Noetherian. By  \cite[Theorem 4.1]{I on the fin}, $\mathcal{K}_\mathfrak{C}$ $\subseteq$ $\mathcal{P}_n$. Applying Theorem \ref{thm: 181.100}, we obtain that $\mathcal{K}_\mathfrak{C}$ $=$ $\Add T$, where $T$ is an $n$-tilting $R$-module.

$(2)$ $\Rightarrow$ $(3)$ holds by  noting that the kernel of the cotorsion pair $(^\bot\mathcal{GI}$, $\mathcal{GI})$ is the class of injective $R$-modules.

$(3)$ $\Rightarrow$ $(1)$. By assumption, $\mathcal{I}$ is closed under direct sums. It follows that  $R$ is left Noetherian. By Theorem \ref{thm: 181.100}, we have that $R$-Mod $=$ $\mathcal{GP}_n$. This completes the proof.
\end{proof}

Let $R$ be a commutative Noetherian ring of finite Krull dimension, in \cite{EHuATotal}, it is proved that $R$ is a Gorenstein ring if and only if every acyclic complex of projective $R$-modules is totally acyclic. We end this paper with the following result which contains $(1)\Leftrightarrow(4)\Leftrightarrow(5)$ of Theorem \ref{thm:1.4} from the introduction.

\begin{prop} \label{prop20170509001} Consider the following
conditions for a ring $R$:
\begin{enumerate}
\item $R$ is a Gorenstein ring.

\item For any exact sequence  $ \cdots\ra T_2 \xrightarrow{d_2} T_1 \xrightarrow{d_1}T_0 \xrightarrow{d_0} \cdots$ of tilting $R$-modules, $\mathcal{K}_{\mathfrak{C}}$ $=$ $\Add T$, where $\mathfrak{C}$ $=$ $(^\bot((\oplus\ker(d_i))^{\bot_\infty}),(\oplus\ker(d_i))^{\bot_\infty})$  and $T$ is a tilting $R$-module.
    \item For any exact sequence  $ \cdots\ra T_2 \xrightarrow{d_2} T_1 \xrightarrow{d_1}T_0 \xrightarrow{d_0} \cdots$ of tilting $R$-modules, each $\ker(d_i)$ has finite Gorenstein projective dimension.
\end{enumerate}
Then $\textit{(1)}\Rightarrow \textit{(2)}\Rightarrow \textit{(3)}$. The converses hold if $R$
is a commutative Noetherian ring of finite Krull dimension.
\end{prop}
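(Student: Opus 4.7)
The plan is to prove $(1)\Rightarrow (2)\Rightarrow (3)$ using Theorem~\ref{thm: 181.100}, and then deduce $(3)\Rightarrow (1)$ under the added hypothesis via the Enochs--Hu characterization of Gorenstein rings cited just before the proposition.

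For $(1)\Rightarrow (2)$, I would use that $R$ being Gorenstein with self-injective dimension at most $n$ on both sides forces every tilting $R$-module to have projective dimension at most $n$. Dimension shifting (Lemma~\ref{lem: 1.03}) through the sequence $\cdots\to T_2\to T_1\to T_0\to\cdots$ then yields $\pd_R\ker(d_i)\le n$, so $G:=\bigoplus_i\ker(d_i)\in\mathcal{P}_n$; Lemma~\ref{lem: 1.09}(2) supplies $\mathcal{A}={}^{\bot}(G^{\bot_\infty})\subseteq\mathcal{P}_n\subseteq\mathcal{GP}_n$, giving the first two hypotheses of Theorem~\ref{thm: 181.100}(2). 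The remaining piece, closure of $\mathcal{K}_{\mathfrak{C}}$ under direct sums, is the technical heart: since $G$ is not finitely generated in general, $\Ext_R^k(G,-)$ need not commute with direct sums a priori. I plan to handle this by exploiting Gorenstein-specific structural facts, for instance by showing that $G^{\bot_\infty}$ is determined by a set of finitely presented modules, whereby one reduces to the classical fact that $\Ext^k$ out of finitely generated modules over Noetherian rings commutes with arbitrary direct sums.

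For $(2)\Rightarrow (3)$, Theorem~\ref{thm: 181.100} applied to $\mathcal{K}_{\mathfrak{C}}=\Add T$ with $T$ an $n$-tilting module gives $\mathcal{A}\subseteq\mathcal{GP}_n$. Since $G\in\mathcal{A}$ tautologically (because $\Ext_R^1(G,B)=0$ for every $B\in\mathcal{B}=G^{\bot_\infty}$), $\Gpd_R G\le n$, and then every $\ker(d_i)$, being a direct summand of $G$, has finite Gorenstein projective dimension.

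For $(3)\Rightarrow (1)$ under the added hypothesis, I would invoke the Enochs--Hu criterion and show that every acyclic complex of projective $R$-modules is totally acyclic. Given such a complex $\cdots\to P_2\to P_1\to P_0\to P_{-1}\to\cdots$, the plan is to direct-sum it with a trivially acyclic complex of free modules (producing $\tilde P_i=P_i\oplus R\oplus R$ with suitable differentials) so that each term of the new complex is a generator-projective, hence tilting. The kernels of the modified complex contain the original $\ker(d_i)$ as direct summands, so by (3) each $\ker(d_i)$ has finite Gorenstein projective dimension. Over a commutative Noetherian ring of finite Krull dimension $d$, a uniform bound on this dimension (via Bass-type finitistic dimension estimates) forces every $\ker(d_i)$ to be a sufficiently high syzygy of a later kernel in the complex and therefore itself Gorenstein projective. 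The complex is thus totally acyclic, and Enochs--Hu delivers $(1)$.
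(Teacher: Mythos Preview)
Your argument for $(1)\Rightarrow(2)$ contains a genuine error. The claim that dimension shifting along $\cdots\to T_2\to T_1\to T_0\to\cdots$ gives $\pd_R\ker(d_i)\le n$ is false: over $R=k[x]/(x^2)$ (which is $0$-Gorenstein), the exact complex $\cdots\to R\xrightarrow{x}R\xrightarrow{x}R\to\cdots$ has $0$-tilting terms, yet each kernel is $k$ with $\pd_R k=\infty$. A doubly infinite complex of modules of projective dimension $\le n$ simply does not force its cycles into $\mathcal{P}_n$; there is no base case for the induction you are implicitly running. What the Gorenstein hypothesis buys is only $\ker(d_i)\in\mathcal{GP}_n$. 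The paper therefore does \emph{not} verify condition~(2) of Theorem~\ref{thm: 181.100} via $\mathcal{A}\subseteq\mathcal{P}_n$; instead it builds, from the complex of $\Omega^n T_i$'s, a strongly Gorenstein projective module $\Omega^nG$ lying in ${}^{\bot}(G^{\bot_\infty})$ and invokes condition~(5) together with Proposition~\ref{lem: 1.9}, which yields $\mathcal{K}_G\subseteq\mathcal{P}_n$ without ever asserting $G\in\mathcal{P}_n$. Your plan for closure of $\mathcal{K}_{\mathfrak{C}}$ under direct sums is also too vague: $G^{\bot_\infty}$ is not obviously determined by finitely presented modules. The paper's argument uses that each $H_j\in\mathcal{K}_G$ lies in every $T_i^{\bot_\infty}$, that tilting classes are closed under direct sums, and then dimension-shifts $\Ext^k(\ker(d_i),H)\cong\Ext^{n+k}(\ker(d_{i-n}),H)$, which vanishes because $H\in\mathcal{P}_n$ (from $\mathcal{K}_G\subseteq\mathcal{P}_n$) and $\Gpd_R\ker(d_{i-n})\le n$.

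For $(3)\Rightarrow(1)$ your outline is close in spirit but the ``uniform bound via Bass-type estimates'' step is not justified: condition~(3) gives each $\ker(d_i)$ finite Gorenstein projective dimension, but a priori these bounds could grow without limit as $i\to-\infty$, and then no kernel is forced to be Gorenstein projective. The paper avoids this by taking $N$ to be the direct sum of \emph{all} cycles of the (free) complex and observing (as in the proof of \cite[Theorem~2.7]{DB}) that $N$ itself is a cycle of another acyclic complex of free modules; a single application of~(3) then gives one $n$ with $\Gpd_R N\le n$, hence a uniform bound on all cycles at once. Your $(2)\Rightarrow(3)$ is fine and matches the paper.
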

\begin{proof}
$(1)$ $\Rightarrow$ $(2)$.  Let  $\cdots\ra T_2 \xrightarrow{d_2} T_1 \xrightarrow{d_1}T_0 \xrightarrow{d_0} \cdots$ be an exact sequence of tilting $R$-modules and $G=\oplus\ker(d_i)$. Since $R$ is Gorenstein by hypothesis, there exists a nonnegative integer $n$ such that each $R$-module has finite Gorenstein projective dimension at most $n$ by \cite[Theorem 12.3.1]{EJ}. So $\ker(d_i)$ $\in$ $\mathcal{GP}_n$  and $T_i$ $\in$ $\mathcal{GP}_n$ for $i\in{\mathbb{Z}}$.
Thus $T_i$ $\in$ $\mathcal{P}_n$ for $i\in{\mathbb{Z}}$ by \cite[Proposition 2.27]{H}.
By the proof of $(2)$ $\Rightarrow$ $(1)$ in Lemma \ref{lem: 1.8},  there exists an exact sequence $\cdots\ra P_2\xrightarrow{\beta_2} P_1\xrightarrow{\beta_1}P_0\xrightarrow{\beta_0} \cdots$ of projective $R$-modules with each $\ker(\beta_i)$ Gorenstein projective such that $\Omega^n\ker(d_i)$ $\cong$ $\ker (\beta_i)$ for all $i\in{\mathbb{Z}}$ and $\Omega^nG$ $\cong$ $\oplus\ker (\beta_i)$. Hence $\Omega^nG$  is strongly Gorenstein projective by Lemma \ref{lem: 2017082101'}. It is clear that $\Omega^nG$ $\in$ $^\bot(G^{\bot_\infty})$ since $(^\bot(G^{\bot_\infty}), G^{\bot_\infty})$ is hereditary. Thus, to show that $\mathcal{K}_{\mathfrak{C}}$ $=$ $\Add T$, it suffices to show that $\mathcal{K}_G$ is closed under direct sums by Theorem \ref{thm: 181.100}.

 Let $H$ $=$ $\oplus H_j$ with $H_j$ $\in$ $\mathcal{K}_G$.  It is clear that $H$ $\in$  $^\bot(G^{\bot_\infty})$. To prove that $H$ $\in$ $G^{\bot_\infty}$, we only need to show that $H$ $\in$ $\ker(d_i)^{\bot_\infty}$ for all $i$ $\in$ $\mathbb{Z}$. Note that each $H_j$ $\in$ $\ker(d_i)^{\bot_\infty}$ for all $i$ $\in$ $\mathbb{Z}$. Applying $\Hom_R(-,H_j)$ to the short exact sequence $0\ra \ker(d_{i})\ra T_{i}\ra \ker(d_{i-1})\ra 0$, one can check that $H_j$ $\in$ $T_i^{\bot_\infty}$ for each $i$ $\in$ $\mathbb{Z}$. Thus $H$ $\in$ $T_i^{\bot_\infty}$ for  $i$ $\in$ $\mathbb{Z}$ since each $T_i$ is $n$-tilting.  Consider the exact sequence $0\ra \ker(d_i)\ra T_i\ra T_{i-1}\ra \cdots\ra T_{i-n+1}\ra \ker(d_{i-n})\ra 0$, we have $\Ext^{k}_R(\ker(d_i),H)$ $\cong$ $\Ext^{n+k}_R(\ker(d_{i-n}),H)$ for $k$ $\geq$ $1$ by Lemma \ref{lem: 1.03}(2). Note that $\mathcal{K}_G$ $\subseteq$ $\mathcal{P}_n$ by Proposition \ref{lem: 1.9}. It follows that $H_j$ $\in$ $\mathcal{P}_n$, and hence $H$ $\in$ $\mathcal{P}_n$. Since $\Gpd_R\ker(d_{i-n})$ $\leq$ $n$ for $i$ $\in$ $\mathbb{Z}$ by the proof above, $\Ext^{n+k}_R(\ker(d_{i-n}),H)=0$ for $k$ $\geq$ $1$ by \cite[Theorem 2.20]{H}. Hence $\Ext^{k}_R(\ker(d_i),H)=0$ for $k$ $\geq$ $1$ and so $H$ $\in$ $\ker(d_i)^{\bot_\infty}$ for all $i$ $\in$ $\mathbb{Z}$, as desired.

$(2)$ $\Rightarrow$ $(3)$ follows from Theorem \ref{thm: 181.100}.

$(3)$ $\Rightarrow$ $(1)$.  Assume that $R$
is a commutative Noetherian ring of finite Krull dimension. Thus, to prove that $R$ is Gorenstein, it suffices  to show that every acyclic complex of projective $R$-modules is totally acyclic by  \cite[Corollary 2]{EHuATotal}.
 Let $\mathbf{P}$ be an acyclic complex of projective $R$-modules, it must be a direct summand of an acyclic complex $\mathbf{F}$ of free $R$-modules. Let $N$ be the direct sum of all cycles of the complex $\mathbf{F}$. Using similar arguments as in the proof of  \cite[Theorem 2.7]{DB}, $N$ is also a cycle of another acyclic complex $\mathbf{F'}$ of free $R$-modules. By assumption, there is a nonnegative integer $n$ such that $N$ $\in$ $\mathcal{GP}_n$. Thus each cycle of $\mathbf{F}$ has Gorenstein projective dimension at most $n$. It implies that each cycle of the complex $\mathbf{F}$ must be Gorenstein projective and so the complex $\mathbf{F}$ is totally acyclic. It follows that $\mathbf{P}$ is also totally acyclic.  This completes the proof.
\end{proof}

\bigskip \centerline {\bf ACKNOWLEDGEMENTS}
\bigskip
This research was partially supported by NSFC (11501257,11671069,11771212), Qing Lan Project of Jiangsu Province and Jinling Institute of Technology (jit-gjfh-201502, jit-b-201615, jit-b-201638, jit-fhxm-201707).  Part of this project was carried out while the corresponding author was visiting at Capital Normal University in January 2016. He would like to thank Professor Changchang Xi for his support and suggestions.
The authors would like to thank Lidia Angeleri-H$\ddot{\rm{u}}$gel, Xiaowu Chen, Nanqing Ding, Xianhui Fu, Jan $\check{\rm{S}}$aroch, Jiaqun Wei and Haiyan Zhu for helpful discussions on parts of this article. The authors are grateful to the referee for the valuable comments.

\bigskip

\end{document}